\DeclareMathOperator{\lc}{lc}
\DeclareMathOperator{\Res}{Res}
\providecommand*{\donothing}[1]{}
\DeclarePairedDelimiter{\norm}{\lVert}{\rVert}
\DeclareMathOperator{\diag}{diag}
\definecolor{Gray}{gray}{0.95}
\newtheorem{theorem}{Theorem}[section]
\newtheorem{proposition}[theorem]{Proposition}
\newtheorem{lemma}[theorem]{Lemma}
\newtheorem{corollary}[theorem]{Corollary}
\theoremstyle{definition}
\newtheorem{definition}[theorem]{Definition}
\theoremstyle{remark}
\newtheorem{example}[theorem]{Example}
\newtheorem{remark}[theorem]{Remark}
\title{Decay bounds for the numerical quasiseparable preservation in matrix functions\footnote{%
    This work has been partially supported by an INdAM/GNCS Research Project 2016,
    and by the Research Council KU Leuven, project
    CREA/13/012, and by the Interuniversity Attraction Poles
    Programme, initiated by the Belgian State, Science Policy Office,
    Belgian Network DYSCO.
}}
\author{ Stefano Massei\footnote{stefano.massei@sns.it},  \ Leonardo Robol\footnote{leonardo.robol@cs.kuleuven.be}\\ 
\\ $^\dagger$Scuola Normale Superiore, Pisa,\\
$^\ddagger$Department of Computer Science, KU Leuven}
\date{}
\renewcommand{\leq}{\leqslant}
\renewcommand{\geq}{\geqslant}
\DeclareMathOperator{\res}{Res}
\begin{document}
  \maketitle

  \begin{abstract}
    Given matrices $A$ and $B$ such that $B=f(A)$, where $f(z)$ is a
    holomorphic function, we analyze the relation between the singular
    values of the off-diagonal submatrices of $A$ and $B$. We provide
    a family of bounds which depend on the interplay between the
    spectrum of the argument $A$ and the singularities of the
    function. In particular, these bounds guarantee the numerical
    preservation of quasiseparable structures under mild
    hypotheses. We extend the Dunford-Cauchy integral formula to the
    case in which some poles are contained inside the contour of
    integration. We use this tool together with the technology of
    hierarchical matrices ($\mathcal H$-matrices) for the effective
    computation of matrix functions with quasiseparable arguments.
      
      \bigskip
      
       {\bf Keywords:} Matrix functions, quasiseparable matrices, off-diagonal singular values, decay bounds, exponential decay, $\mathcal H$-matrices.
       
       \bigskip 
       
         {\bf AMS subject classifications:} 
           15A16, % Matrix exponential and similar functions of matrices 
           65F60, % Matrix exponential and similar matrix functions
           65D32, % Quadrature and cubature formulas 
           30C30, % Numerical methods in conformal mapping theory
           65E05. % Numerical methods in complex analysis (potential theory, etc.) 
  \end{abstract}
  
   % !TeX spellcheck = en_US
   
     \section{Introduction}
      Matrix functions are an evergreen topic in matrix algebra due to their wide use in applications \cite{gant59,lanc85,horn,high, gol12}. It is not hard to imagine why the interaction of structures with matrix functions is an intriguing subject. In fact, in many cases structured matrices arise and can be exploited for speeding up algorithms, reducing storage costs or allowing to execute otherwise not feasible computations. The property we are interested in is the {\em
        quasi-separability}. That is, we want to understand whether the submatrices of $f(A)$ contained in the strict
      upper triangular part or in the strict lower triangular part, called {\em
        off-diagonal submatrices}, have a ``small'' numerical rank.
      
      Studies concerning the numerical preservation of data-sparsity patterns were carried out recently  \cite{benzi2013decay,benzi2014decay,benzi-simoncini,canuto}. Regarding the quasiseparable structure \cite{vanbarel:book1,vanbarel:book2,eide2005,eide2013}, in \cite{gavrilyuk2002mathcal,gavrilyuk2005data,hackbusch2016hierarchical} Gavrilyuk, Hackbusch and Khoromskij addressed the issue of approximating some matrix functions using the hierarchical format \cite{borm}. In these works the authors prove that, given a low rank quasiseparable matrix $A$ and a holomorphic function $f(z)$, computing $f(A)$ via a quadrature formula applied to the contour integral definition, yields an approximation of the result 
      with a low quasiseparable rank. Representing $A$ with a $\mathcal H$-matrix and exploiting the structure
      in the arithmetic operations 
      provides an algorithm with almost linear complexity. 
      The feasibility of this approach is equivalent to 
      the existence of a 
      rational function $r(z) = \frac{p(z)}{q(z)}$ which well-approximates 
      the holomorphic function $f(z)$ on the spectrum of the
      argument $A$. More precisely, since the quasiseparable rank is
      invariant under inversion and sub-additive with respect to matrix addition and multiplication, if $r(z)$ is a good approximation
      of $f(z)$ of low degree then the matrix $r(A)$ is an accurate approximation of $f(A)$ with low quasiseparable rank. This argument explains the preservation of the quasiseparable structure, but still needs a deeper analysis which involves the specific properties of the function $f(z)$ in order to provide effective bounds to the quasiseparable rank of the matrix $f(A)$. 
      
      In this article we deal with the analysis of the quasiseparable structure of matrix functions by studying the interplay
      between the off-diagonal singular values of the matrices $A$ and $B$ such that $B=f(A)$. Our intent is to understand which parameters of the model come into play in the numerical preservation of the structure and to extend the analysis to functions with singularities.
      
      In Section \ref{sec:off-diagonal} we see how the 
      integral definition of a matrix function
      enables us to study the structure of the off-diagonal blocks in $f(A)$. In Section~\ref{sec:QR} we develop the analysis of the singular values of structured outer products and we derive bounds for the off-diagonal singular values of matrix functions. 
     
      In Section~\ref{sec:poles} we adapt the approach to treat functions with 
      singularities. 
      
      The key role is played by an extension of the Dunford-Cauchy
      formula to the case in which some singularities lie inside the
      contour of integration.  In Section~\ref{sec:computational} we
      comment on computational aspects and we perform some experiments
      for validating the theoretical results, while in
      Section~\ref{sec:concludingremarks} we give some concluding
      remarks.
      
      \subsection{Definitions of matrix function}
      In \cite{high} ---which we indicate as a reference for
      this topic--- the author focuses on three equivalent definitions of matrix function. For our purposes we recall only two of them: one based on the Jordan canonical form of the argument and the other which is a generalization of the Cauchy integral formula.
      \begin{definition}\label{def:matrix-func1}
      Let $A\in\mathbb C^{m\times m}$  and $f(z)$ be a  function holomorphic in a set containing the spectrum of $A$. Indicating with $J=\hbox{diag}(J_1,\ldots,J_p)=V^{-1}AV$ the Jordan canonical form of $A$, we define $f(A):=V\cdot f(J)\cdot V^{-1}=V\cdot \diag(f(J_k))\cdot V^{-1}$ where $J_k$ is an 
      $m_k \times m_k$ Jordan block and
      \[
      \quad J_k=
      \begin{bmatrix}
      \lambda_k&1\\
      &\ddots&\ddots\\
      &&\ddots&1\\
      &&&\lambda_k
      \end{bmatrix},\quad  f(J_k)=\begin{bmatrix}
      f(\lambda_k)&f'(\lambda_k)&\dots&\frac{f^{(m_k-1)}(\lambda_k)}{(m_k-1)!}\\
      &\ddots&\ddots&\vdots\\
      &&\ddots&f'(\lambda_k)\\
      &&&f(\lambda_k)
      \end{bmatrix}.
      \]
      \end{definition}
      \begin{definition}[Dunford-Cauchy integral formula]   
             Let $f(z)$ be a holomorphic function in $\mathcal D\subseteq\mathbb C$ and $A\in\mathbb C^{m\times m}$ be a matrix whose spectrum is contained in $\Omega\subset\mathcal D$. Then we define
             \begin{equation}\label{cauchyformula}
             f(A):=
             \frac{1}{2\pi i}\int_{\partial\Omega}(zI-A)^{-1}f(z)dz.
             \end{equation}
             The matrix-valued function $\mathfrak R(z):=(zI-A)^{-1}$ is called \emph{resolvent}.
             \end{definition}
             Suppose that the spectrum of $A$ is contained in a disc $\Omega=B(z_0,r):=\{|z-z_0|<r\}$ where the function is holomorphic. Then, it is possible to write $f(A)$ as an integral \eqref{cauchyformula} along $S^1:=\partial B(0,1)$ for a matrix with spectral radius less than $1$. In fact,
             \begin{align*}
                     \frac{1}{2\pi i}\int_{\{|z-z_0|=r\}}(zI-A)^{-1}f(z)dz\quad=\quad \frac{1}{2\pi i}\int_{S^1}(wI-\tilde A)^{-1}f(rw+z_0)dw
                    \end{align*}
                    where $\tilde A=r^{-1}(A-z_0I)$ has the spectrum contained in $B(0,1)$. Given the above remark it is not restrictive to consider only the case of $A$ having spectral radius less than $1$.
                    
                    \begin{remark} \label{rem:spectralradiusD} In the
                      following we will often require, besides the non
                      singularity of $(zI-A)$, also that $(zI-D)$ is
                      invertible along the path of integration for any
                      trailing diagonal block $D$. This is not
                      restrictive since --- given a sufficiently large
                      domain of analyticity for $f$ --- one can choose
                      $r$ large enough which guarantees this
                      property. As an example, any $r$ such that
                      $r\geq\norm {A}$ is a valid choice for any
                      induced norm.
                    \end{remark}
   \section{Off-diagonal analysis of $f(A)$}
   \label{sec:off-diagonal}
   
   The study of the decay of the off-diagonal singular values
   has been investigated by \cite{chandrasekaran2010numerical}
   concerning the block Gaussian elimination on
   certain classes of quasiseparable matrices; in \cite{netna,qcr-krylov} the authors have proved
   fast decay properties that have been used to show the numerical quasiseparable preservation in the cyclic reduction \cite{buzbee,hockney,bim:book,SMC,CR}. 
   
   The aim of this section is characterizing the
   structure of the off-diagonal blocks by means
   of the integral definition of $f(A)$. 
   
   \subsection{Structure of an off-diagonal block}
   
   Consider the Dunford-Cauchy integral formula \eqref{cauchyformula} in the case $\partial\Omega=S^1$ and $A$ with the spectrum strictly
   contained in the unit disc. 
   In this case 
   the spectral radius of $A$ is less than $1$ and we can expand the resolvent as $(zI-A)^{-1}=\sum_{n\geq 0}z^{-(n+1)}A^n$.
          
           Applying component-wise the residue theorem we find that the result of the  integral in \eqref{cauchyformula} coincides with the coefficient of degree $-1$ in the Laurent expansion of $(zI-A)^{-1}f(z)$.       
          Thus, examining the Laurent expansion of an off-diagonal block, we can derive a formula for 
          the corresponding block in $f(A)$.         
          Partitioning $A$ as follows
          \[
          A=
          \begin{bmatrix}
          \bar A&\bar B\\
          \bar C&\bar D
          \end{bmatrix}\quad\Rightarrow\quad 
         \mathfrak R(z)=
          \begin{bmatrix}
                 zI-\bar A&-\bar B\\
                 -\bar C&zI-\bar D
                 \end{bmatrix}^{-1}
          \]
          and supposing that the spectral radius of $\bar D$ is less than $1$ (which is not restrictive thanks to
          Remark~\ref{rem:spectralradiusD}) we get
          \[
         \mathfrak R(z) =\begin{bmatrix}
                 S_{zI-\bar D}^{-1}&*\\
                 (zI-\bar D)^{-1}\bar CS_{zI-\bar D}^{-1}&*
                 \end{bmatrix}, 
          \] 
          where $S_{zI-\bar D} =zI-\bar A-\bar B(zI-\bar D)^{-1}\bar C$ is the Schur complement of the bottom right block and  $*$ denotes blocks which are not relevant for our analysis. 
           We can write the Laurent expansion of the two inverse matrices:
          \[
          (zI-\bar D)^{-1}=\sum_{j\geq 0}z^{-(j+1)}\bar D^j,\qquad S_{zI-\bar D}^{-1} = 
          \begin{bmatrix}
            I & 0 
          \end{bmatrix} \cdot \left(\sum_{j\geq 0}z^{-(j+1)}A^j\right)\cdot \begin{bmatrix}
            I \\ 0 
          \end{bmatrix},
          \]
          where for deriving the expansion of $S_{zI-\bar D}^{-1}$ we used that it corresponds to the upper left block in $\mathfrak R(z)$.
          
          Let  $f(z)=\sum_{n\geq 0}a_nz^n$ be the Laurent expansion of $f$ in $S^1$ and let $\mathfrak R(z)\cdot f(z):=\begin{bmatrix}
                 *&*\\
                 G(z)&*
                 \end{bmatrix}$, then  
          \begin{equation}\label{cbar}
          G(z)=\sum_{n\geq 0}a_n\sum_{j\geq 0}\bar D^j\bar C\cdot [I\quad 0]\cdot \sum_{s\geq 0}A^sz^{n-j-s-2}\cdot [I\quad 0]^t.
          \end{equation}
          Exploiting this relation we can prove the following.
                                            \begin{lemma}\label{lem:tech}
                                            Let $A=
          \begin{bmatrix}
          \bar A&\bar B\\
          \bar C&\bar D
          \end{bmatrix}$ be a square matrix with square diagonal blocks, $\bar C=uv^t$ and suppose that the spectrum of $A$ and $\bar D$ is contained in $B(0, 1)$. Consider $f(z)=\sum\limits_{n\geq 0}a_nz^n$ for $|z|\leq 1$ and let   $f(A)=\begin{bmatrix}
          *&*\\
          \tilde C&*
          \end{bmatrix}$ be partitioned according to $A$. Then
          \begin{align*}
          \tilde C&=\sum_{n\geq 1}a_n \left[
                                                                    u\ \vline\ \bar D\cdot u\ \vline\ \dots\ \vline\ \bar D^{n-1}\cdot u
                                                                    \right]\cdot
                                                                    \left[
                                                                        (A^t)^{n-1}\tilde v\ \vline\ \dots\ \vline\ A^t\tilde v\ \vline\ \tilde v
                                                                        \right]^t[ I \ 0 ]^t
              \end{align*}
              with $\tilde v=[I\ 0]^tv$.
   \end{lemma}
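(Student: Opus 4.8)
The plan is to read $\tilde C$ directly off the Laurent expansion \eqref{cbar} by extracting a residue. Recall that $\tilde C$ is the $(2,1)$ block of $f(A)=\frac{1}{2\pi i}\int_{S^1}\mathfrak R(z)f(z)\,dz$, and that this block equals $\frac{1}{2\pi i}\int_{S^1}G(z)\,dz$, which by the residue theorem is precisely the coefficient of $z^{-1}$ in the Laurent series $G(z)$. Hence the whole task reduces to isolating the correct terms of the triple sum in \eqref{cbar}.

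First I would collect, in \eqref{cbar}, all summands whose power of $z$ equals $-1$, i.e.\ impose $n-j-s-2=-1$, or equivalently $j+s=n-1$ with $j,s\geq 0$. Since $j+s\geq 0$, this already forces $n\geq 1$, which explains why the outer sum in the statement starts at $n=1$ rather than $n=0$. Writing $s=n-1-j$, this yields
\[
\tilde C=\sum_{n\geq 1}a_n\sum_{j=0}^{n-1}\bar D^j\,\bar C\,[I\ 0]\,A^{\,n-1-j}\,[I\ 0]^t .
\]

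Next I would insert the rank-one hypothesis $\bar C=uv^t$. Since $[I\ 0]$ restricts to the leading block, $\bar C[I\ 0]=u\,(v^t[I\ 0])=u\,\tilde v^t$ with $\tilde v=[I\ 0]^tv$, so each summand becomes the outer product $(\bar D^j u)(\tilde v^t A^{\,n-1-j})[I\ 0]^t$. For fixed $n$ the inner sum is therefore $\sum_{j=0}^{n-1}(\bar D^j u)(\tilde v^t A^{\,n-1-j})$, a sum of $n$ rank-one matrices that factors as the product of the ``Krylov'' matrix $[\,u\mid \bar D u\mid\cdots\mid \bar D^{\,n-1}u\,]$ (whose $j$-th column is $\bar D^j u$) with the matrix whose $j$-th row is $\tilde v^t A^{\,n-1-j}$. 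Observing that this second factor is exactly $[\,(A^t)^{n-1}\tilde v\mid\cdots\mid A^t\tilde v\mid\tilde v\,]^t$, and pulling the common trailing $[I\ 0]^t$ out of the inner sum, gives the claimed expression.

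The computation is essentially mechanical once the residue is identified; the only point requiring care is the index bookkeeping in the last step --- matching the order $\bar D^0u,\dots,\bar D^{\,n-1}u$ of the columns on the left with the reversed order $\tilde v^tA^{n-1},\dots,\tilde v^t$ of the rows on the right, and verifying that the transpose notation $[\,\cdots\,]^t$ reproduces precisely those rows. I do not expect any analytic obstacle: convergence of all the involved series on $S^1$ is guaranteed by the spectral radius hypotheses on $A$ and $\bar D$ (together with Remark~\ref{rem:spectralradiusD}), so that term-by-term extraction of the $z^{-1}$ coefficient is legitimate.
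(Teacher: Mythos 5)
Your proposal is correct and follows essentially the same route as the paper's proof: both identify $\tilde C$ with the coefficient of $z^{-1}$ in the expansion \eqref{cbar} via the residue theorem, impose $j+s=n-1$ to reduce the triple sum to $\sum_{n\geq 1}a_n\sum_{j=0}^{n-1}\bar D^j u\tilde v^t A^{n-1-j}[I\ 0]^t$, and then recognize this as the stated Krylov-type outer product. Your write-up merely spells out the index bookkeeping and the factorization step in more detail than the paper does.
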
  
                \begin{proof}
                By the Dunford-Cauchy formula, the subdiagonal block $\tilde C$ is equal to $\int_{S^1}G(z)dz$. By means of the residue theorem we can write  the latter as
                the coefficient of degree $-1$ in \eqref{cbar}, that is
                \[
                \tilde C = \sum_{n\geq 1}a_n\sum_{j=0}^{n-1}\bar D^juv^t\cdot [I\ 0]A^{n-j-1}[I\ 0]^t=\sum_{n\geq 1}a_n\sum_{j=0}^{n-1}\bar D^ju\tilde v^tA^{n-j-1}[I\ 0]^t, 
                \] which is in the sought form. 
                \end{proof} 
                
                \begin{remark}\label{rem:outer}
                The expression that we obtained for $\tilde C$  in the previous Lemma is a sum of outer products of vectors of the form $\bar D^ju$ with $(A^t)^{n-j-1}\tilde v$, where the spectral radii of $A$ and $\bar D$ are both less than $1$. 
                This implies that the addends 
                become negligible for a sufficiently large $n$. So, in order to derive bounds for the singular values, we will focus on the truncated sum
                \begin{equation}
                \label{kryl_negl}
                \sum_{n= 1}^sa_n \left[
             u\ \vline\ \bar D\cdot u\ \vline\ \dots\ \vline\ \bar D^{n-1}\cdot u
             \right]\cdot
             \left[
                 (A^t)^{n-1}\tilde v\ \vline\ \dots\ \vline\ A^t\tilde v\ \vline\ \tilde v
                 \right]^t[ I \ 0 ]^t 
                 \end{equation}
                 which can be rewritten as:
                 \begin{equation}
                 \label{kyl/horn}
                 \left[
                                              u\ \vline\ \bar D\cdot u\ \vline\ \dots\ \vline\ \bar D^{s-1}\cdot u
                                              \right]\cdot
                                              \left[
                                                  \sum_{n=0}^{s-1}a_{n+1}(A^t)^n\tilde v\ \vline\ \dots\ \vline\ (a_s A^t+a_{s-1}I)\tilde v\ \vline\  a_s \tilde v
                                                  \right]^t[ I \ 0 ]^t.
    \end{equation}
    The columns of the left factor span the Krylov subspace $\mathcal K_n(\bar D,u):=\operatorname{Span}\{u,\bar D u, \dots,\bar D^{n-1}u\}$.
    
    Let $p(z):=\sum_{n=0}^{s-1}a_{n+1} z^n$.
    Looking closely at the columns of the right factor in \eqref{kyl/horn} we can see that they correspond to the so called Horner shifts (which are the intermediate results obtained while evaluating a polynomial using the Horner rule \cite{henrici}) of $p(A^t)\tilde v$. In the following we will refer to the patterns in the factors of \eqref{kyl/horn} as \emph{Krylov} and \emph{Horner} matrices, respectively.
    \end{remark}
     \section{Outer products, QR factorization and singular values}\label{sec:QR}
     The problem of estimating the numerical rank of an outer product is addressed for example in \cite{netna},
       where the authors estimate
        the singular values
        of a matrix $X = \sum_{i = 1}^{n} u_i v_i^*$ ---where the superscript $*$ stands for the usual complex conjugate transposition--- exploiting the exponential decay in the norms of  the rank $1$ addends.   
        However, such an estimate is sharp only when the vectors
        $u_i$ and $v_i$ are orthogonal. In general, 
        the singular values of $X$ decay quickly also when 
        the vectors $u_i$ and/or $v_i$ tend to become parallel as $i$ increases.  
        For this reason, in this work we   rephrase the 
        expression $X = \sum_{i = 1}^n u_i v_i^*$ as 
        $X = \sum_{i = 1}^m \tilde u_i \tilde v_i^*$ where
        $\tilde u_i$ and $\tilde v_i$ are chosen as ``orthogonal
        as possible''.   
        To this aim we study the QR decomposition of the  matrices
        \[
          U = \begin{bmatrix}
            \\
            u_1 & u_2 & \cdots & u_n \\
            \\
          \end{bmatrix},\qquad
           V = \begin{bmatrix}
                \\
                v_1 & v_2 & \cdots & v_n  \\
                \\
              \end{bmatrix}.
        \]
        
        We indicate their QR decompositions 
        as $U = Q_UR_U$ and $V = Q_VR_V$ where
        $Q_U,Q_V$ are $m \times m$ and $R_U,R_V$ have $m$ rows and $n$
        columns.   
        
       This section is divided into five parts. In the first we study the element-wise decay in the QR factorization of Krylov matrices. In the second we show how to handle the case in which the matrix $A$ 
       is not diagonalizable. In the third we study the same properties for Horner matrices. In Section~\ref{sec:krylovhornersingdecay} we show that the singular values of a Krylov/Horner outer product inherit the decay. Finally, in Section~\ref{sec:singdecay} we derive bounds for the off-diagonal singular values of $f(A)$. 
      \subsection{Decay in the entries of the $R$ factor for Krylov matrices}
      
      In this section we show how to exploit the relation
      between Krylov subspaces and polynomial approximation 
      \cite{saad2003iterative}. 
      More precisely, we relate the decay in the matrix $R$ with the convergence of a minimax polynomial approximation problem in a subset of the complex plane.   
      
      The rate of convergence of the latter problem depends on the geometry
      of the spectrum of $A$. In particular, for every
      compact connected subset of $\mathbb C$ that contains the spectrum
    we obtain an exponent for the decay depending on 
    its logarithmic capacity \cite{landkof,markushevich2005theory}.
   	 
     In order to simplify the exposition, in this section we will assume that 
     the matrix $A$ is diagonalizable. However, this is not strictly required
     and in the next subsection we show
     how to relax this hypothesis. 
     
     Our approach is inspired by the one of Benzi and Boito in \cite{benzi2013decay,benzi2014decay}, where the authors proved the numerical preservation of sparsity patterns in matrix functions. For a classic reference of the complex analysis behind the next
     definitions and theorems we refer to \cite{markushevich2005theory}. 
     
          \begin{definition}[Logarithmic capacity]
            Let $F \subseteq \mathbb C$ be a nonempty, compact and 
            connected set, and 
            denote with $G_\infty$ the connected component of the complement containing
            the point at the infinity. Since $G_\infty$ is simply connected, 
            in view of the Riemann Mapping Theorem we know that there exists
            a conformal map $\Phi(z)$ which maps $G_\infty$ to the complement of
            a disc. If we impose the normalization conditions
            \[
              \Phi(\infty) = \infty, \qquad 
              \lim_{z \to \infty} \frac{\Phi(z)}{z} = 1
            \]
            then this disc is uniquely determined. We say that its radius
            $\rho$ is the \emph{logarithmic capacity} of $F$ and we write $\lc(F)=\rho$.         
            Let $\Psi=\Phi^{-1}$, for every $R>\rho$ we indicate with $C_R$ the image under $\Psi$ of the circle $\{|z|=R\}$. 
          \end{definition}  
           
         The logarithmic capacity is strictly
         related to the following well-known result of polynomial approximation in the complex plane.
         
           \begin{lemma}[Corollary~2.2 in \cite{ellacott1983computation}]\label{lem:minimax-approx}
                 Let $F$ be a Jordan region whose boundary is of finite total rotation $\mathcal V$ and of logarithmic capacity $\rho$. If $f(z)$ is an analytic function  on $\mathbb{C}$ then $\forall r>\rho$
                 and any integer $i\geq 0$ there exists a polynomial $p_i(z)$ of degree at most $i$  such that 
                 \[
                 \lVert f(z)-p_i(z)\rVert_{\infty, F}\leq \frac{M(r)\mathcal V}{\pi(1-\frac{\rho}{r})}\left(\frac{\rho}{r}\right)^{i+1}.
                 \]
                 with $M(r):=\max_{C_r} |f(z)|$.
                 \end{lemma}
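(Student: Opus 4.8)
The plan is to prove the bound through the classical machinery of \emph{Faber polynomials}, which is the natural route underlying the cited result. Since $\Phi$ maps $G_\infty$ conformally onto the exterior of the disc of radius $\rho$ and $\Psi=\Phi^{-1}$ is its inverse, one associates to $F$ the Faber polynomials $\Phi_n(z)$, defined as the polynomial (principal) part at infinity of $\Phi(z)^n$; the normalization $\Phi(z)/z\to 1$ guarantees that each $\Phi_n$ is monic of degree exactly $n$. I would then expand $f$ in its Faber series $f(z)=\sum_{n\geq 0}\alpha_n\Phi_n(z)$, obtained from the Cauchy integral $f(z)=\frac{1}{2\pi i}\oint_{C_r}\frac{f(\zeta)}{\zeta-z}\,d\zeta$ by the substitution $\zeta=\Psi(w)$ together with the generating-function identity $\frac{\Psi'(w)}{\Psi(w)-z}=\sum_{n\geq 0}\frac{\Phi_n(z)}{w^{n+1}}$, which holds for $|w|>\rho$ and $z\in F$. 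This produces the coefficient formula
\[
\alpha_n=\frac{1}{2\pi i}\oint_{|w|=r}\frac{f(\Psi(w))}{w^{n+1}}\,dw,\qquad r>\rho,
\]
well defined for every such $r$ because $f$ is entire. The natural candidate is the partial sum $p_i(z):=\sum_{n=0}^{i}\alpha_n\Phi_n(z)$, a polynomial of degree at most $i$, so that the error is the Faber tail $f(z)-p_i(z)=\sum_{n\geq i+1}\alpha_n\Phi_n(z)$.

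Two estimates then drive the argument. First, a Cauchy estimate applied to the integral representation above bounds the Faber coefficients: since $\Psi$ sends $\{|w|=r\}$ onto $C_r$, we have $|f(\Psi(w))|\leq M(r)$ there, whence $|\alpha_n|\leq M(r)\,r^{-n}$. Second --- and this is the step where the finite total rotation hypothesis enters --- one needs a uniform bound on the Faber polynomials over the closure of $F$, namely
\[
\max_{z\in\overline{F}}|\Phi_n(z)|\leq\frac{\mathcal V}{\pi}\,\rho^{\,n}.
\]
This is the classical Faber polynomial estimate for Jordan regions of bounded boundary rotation (for a convex region $\mathcal V=2\pi$ and one recovers the familiar factor $2$). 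Its proof expresses $\Phi_n$ through the generating function and controls the resulting kernel by the total variation of the tangent-direction angle along $\partial F$. I expect this to be the main obstacle, since it is precisely the nontrivial geometric input of the statement and the only place where the regularity of $\partial F$ is really used.

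With both estimates in hand the conclusion is a routine summation. For $z\in F$,
\[
|f(z)-p_i(z)|\leq\sum_{n\geq i+1}|\alpha_n|\,|\Phi_n(z)|\leq\frac{\mathcal V}{\pi}\,M(r)\sum_{n\geq i+1}\left(\frac{\rho}{r}\right)^{n},
\]
and since $\rho/r<1$ the geometric tail sums to $\left(\frac{\rho}{r}\right)^{i+1}\big/\left(1-\frac{\rho}{r}\right)$. Collecting the constants yields
\[
\lVert f-p_i\rVert_{\infty,F}\leq\frac{M(r)\,\mathcal V}{\pi\left(1-\frac{\rho}{r}\right)}\left(\frac{\rho}{r}\right)^{i+1},
\]
which is exactly the claimed bound; passing to the supremum over $z\in F$ is harmless because all the estimates above are uniform on $\overline{F}$.
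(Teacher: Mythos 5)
The paper offers no proof of this lemma at all --- it is quoted verbatim as Corollary~2.2 of \cite{ellacott1983computation} --- and your reconstruction follows exactly the route of that source: expand $f$ in a Faber series, bound the coefficients by a Cauchy estimate on $C_r$ (giving $|\alpha_n|\leq M(r)r^{-n}$), bound the Faber polynomials uniformly on $\overline F$ by $(\mathcal V/\pi)\rho^n$ (the Pommerenke/K\"ovari bounded-rotation estimate, which is Ellacott's Theorem~2.1 and the only non-elementary input, correctly flagged by you as such), and sum the geometric tail. Your argument is correct as written, including the normalization bookkeeping ($\Phi_n=\rho^n\varphi_n$ relative to the unit-disc-normalized exterior map, so the convex case $\mathcal V=2\pi$ recovers the classical constant $2$) and the fact that the uniform bound on $\Phi_n$ over $\overline F$ also justifies the interchange of sum and integral in the Faber expansion, so nothing further is needed.
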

          
          In order to exploit Lemma~\ref{lem:minimax-approx} in our framework
          we need to introduce some new constants related to the geometry
          of the set $F$. 
           
          \begin{definition}
                        Given $F \subseteq \mathbb C$ compact, connected with $\lc(F)=\rho\in(0,1)$ we indicate with $R_F$ the quantity
                        \[
                        R_F:=\sup\{R>\rho:\ C_R \text{ is strictly contained in the unit circle}\}.
                        \]
                        \end{definition}   
          \begin{definition}
            We say that $F\subset \mathbb C$ is {\em enclosed} by
            $(\rho,R_F,\mathcal V_F)$ if $\exists F'$ Jordan region
            whose boundary has finite total rotation\footnote{%
              See \cite[Section 2, p. 577]{ellacott1983computation} for the definition of
              total rotation.}
            $\mathcal V_F$,
            $\lc(F')=\rho$, $R_F=R_{F'}$ and $F\subseteq F'$.
          \end{definition}
          
          \begin{definition}
          We say that $A\in\mathbb C^{m\times m}$ is {\em enclosed} by $(\rho,R_A,\mathcal V_A)$ if the set of its eigenvalues is enclosed by $(\rho,R_A,\mathcal V_A)$.
          \end{definition}
          
      \begin{definition}
      Let $J$ be the Jordan canonical form of $A\in\mathbb C^{m\times m}$. Let $\mathbb V :=\{V\in \mathbb{C}^{m\times m}:\ V^{-1}AV=J\}$.  We define the quantity
      \[
      \kappa_{s}(A):=\inf_{V\in\mathbb V}\parallel V\parallel_2 \parallel V^{-1}\parallel_2.
      \]
      \end{definition}
          
          We can now proceed to study the $R$ factor of a Krylov matrix.
          
          \begin{theorem} \label{thm:krylov-decay}
            Let  $A\in\mathbb{C}^{m\times m}$   be a diagonalizable matrix enclosed by $(\rho,R_A,\mathcal V_A)$, $\rho\in(0,1)$ and $b\in\mathbb{C}^m$. Moreover,
            let
            $U$ be the matrix whose columns span the $n$-th Krylov 
            subspace $\mathcal{K}_n(A,b)$:
            \[
              U = \left[ \begin{array}{c|c|ccc}
               b & Ab & A^2 b & \ldots & A^{n-1} b \\
              \end{array} \right]. 
            \]
            Then $\forall r\in (\rho,R_A)$  the entries of the R factor in the QR decomposition of $U$ satisfy
            \[
            |R_{ij}| \leq c(r)\cdot \kappa_{s}(A)\cdot \left(\frac{\rho}{r}\right)^{i} \delta^j 
            \]
             where $\delta=\max_{z\in C_r}|z|$ and  $c(r)=\frac{\mathcal V_A}{\delta\pi(1-\frac{\rho}{r})}\cdot \norm {b}_2$.
          \end{theorem}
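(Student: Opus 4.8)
The plan is to read the entries of $R$ off as orthogonal projection coefficients and then convert each one into a minimax polynomial approximation problem on the spectrum of $A$. Writing the factorisation columnwise as $U = QR$ with orthonormal columns $q_1,\dots,q_m$ of $Q$, the $j$-th column of $U$ is $A^{j-1}b$, so taking the inner product with $q_i$ and using that $R$ is upper triangular gives $R_{ij} = q_i^{*}A^{j-1}b$, which already vanishes for $i>j$. The crucial observation is that $\operatorname{Span}\{q_1,\dots,q_{i-1}\} = \mathcal K_{i-1}(A,b) = \operatorname{Span}\{b,Ab,\dots,A^{i-2}b\}$, so $q_i$ is orthogonal to every vector $p(A)b$ with $\deg p \leq i-2$. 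Hence for any such polynomial $p$ one may write $R_{ij} = q_i^{*}\bigl(A^{j-1}-p(A)\bigr)b$, and since $\norm{q_i}_2 = 1$ this yields $\abs{R_{ij}} \leq \norm{(A^{j-1}-p(A))b}_2$.

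Next I would extract the non-normality through the diagonalisation. Writing $A = V\Lambda V^{-1}$ with $\Lambda$ diagonal, every polynomial obeys $g(A) = V g(\Lambda) V^{-1}$, so $\norm{(A^{j-1}-p(A))b}_2 \leq \norm{V}_2\norm{V^{-1}}_2\,\norm{b}_2\,\max_{k}\abs{\lambda_k^{\,j-1}-p(\lambda_k)}$; taking the infimum over admissible eigenvector matrices replaces $\norm{V}_2\norm{V^{-1}}_2$ by $\kappa_{s}(A)$, while the maximum over the eigenvalues is dominated by the sup-norm of $z\mapsto z^{j-1}-p(z)$ over the enclosing Jordan region $F'$, which contains the spectrum. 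At this point $\abs{R_{ij}} \leq \kappa_{s}(A)\,\norm{b}_2\,\min_{\deg p \leq i-2}\norm{z^{j-1}-p(z)}_{\infty,F'}$, and the estimate is reduced to a minimax approximation of the entire function $z\mapsto z^{j-1}$ on $F'$.

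The final step is to invoke Lemma~\ref{lem:minimax-approx} with $f(z)=z^{j-1}$ and $F=F'$, which has logarithmic capacity $\rho$ and total rotation $\mathcal V_A$. For this choice $M(r)=\max_{C_r}\abs{z}^{\,j-1}=\delta^{\,j-1}$ with $\delta=\max_{z\in C_r}\abs z$, and the admissible approximation degree $i-2$ turns the lemma's factor $(\rho/r)^{\,\mathrm{deg}+1}$ into a geometric power whose exponent tracks the row index $i$. Substituting $M(r)=\delta^{\,j-1}$ and collecting the constant $\tfrac{\mathcal V_A}{\pi(1-\rho/r)}\norm{b}_2$ reproduces the announced bound $\abs{R_{ij}} \leq c(r)\,\kappa_{s}(A)\,(\rho/r)^{i}\delta^{j}$ once $c(r)$ is rewritten as $\tfrac{\mathcal V_A}{\delta\pi(1-\rho/r)}\norm{b}_2$, the extra factor $1/\delta$ absorbing the shift between $\delta^{\,j-1}$ and $\delta^{\,j}$. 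The restriction $r\in(\rho,R_A)$ is exactly what guarantees that $C_r$ is strictly inside the unit circle and that $f$ is analytic on the relevant region.

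I expect the main obstacle to be bookkeeping rather than conceptual: one must verify that the degree of the approximating polynomial is precisely the largest one for which $p(A)b$ remains in $\operatorname{Span}\{q_1,\dots,q_{i-1}\}$, so that the exponent of $\rho/r$ is tied correctly to the row index $i$ (this is where an off-by-one in the Krylov index must be handled carefully), and that the hypotheses of Lemma~\ref{lem:minimax-approx} are applied to the regular enclosing region $F'$ rather than to the possibly irregular spectrum itself — which is exactly what the notion of $A$ being \emph{enclosed} by $(\rho,R_A,\mathcal V_A)$ is designed to supply. The trivial case $i>j$, where $R_{ij}=0$, and the first rows, where no nonzero admissible polynomial is available, should be recorded separately, but both are subsumed by the stated inequality.
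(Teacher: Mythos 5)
Your proposal is correct and follows essentially the same route as the paper's proof: reading the entries of $R$ as projection coefficients onto the orthogonal complement of a Krylov subspace, passing through the diagonalization to extract $\kappa_s(A)$, and invoking Lemma~\ref{lem:minimax-approx} with $f(z)=z^{j-1}$ and $M(r)=\delta^{j-1}$. One remark: your degree-$(i-2)$ approximation actually yields the exponent $(\rho/r)^{i-1}$ rather than the stated $(\rho/r)^{i}$, but this off-by-one is present in the paper's own proof as well (which bounds $R_{i+1,j}$ with exponent $i$), so your argument reproduces the paper's reasoning exactly, discrepancy included.
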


          \begin{proof}
             Let $QR = U$ be the QR factorization of $U$ and $V^{-1}AV=D$ the spectral decomposition of $A$. Notice that the quantity $\norm{R_{i+1:j,j}}_2$
             is equal to the norm of the projection
             of $u_j$ on the orthogonal to the 
             space spanned by 
             the first $i$ columns of $U$, that
             is $\mathcal K_{i}(A, b)^\perp$. 
             It is well-known
            that the Krylov subspace $\mathcal{K}_i(A,b)$ contains
            all the vectors of the form $p(A) b$ where $p$ has 
            degree at most $i-1$. 
            In particular, we have:
            \begin{align*}
              |R_{i+1,j}| \leq \norm{R_{i+1:j,j}}_2 \leq 
              \min_{deg(p)=i-1}\lVert p(A) b - u_{j} \rVert_2&\leq \min_{deg(p)=i-1}\lVert p(D)  - D^{j-1} \rVert_2 \lVert V^{-1}\rVert_2 \lVert  V\rVert_2\lVert b\rVert_2\\ &\leq \frac{M(r)\mathcal V_A}{\pi(1-\frac{\rho}{r})}\left(\frac{\rho}{r}\right)^i \kappa_{s}(A) \norm{b}_2,
            \end{align*}
            where $M(r)=\max_{C_r} |z|^{j-1}=\delta^{j-1}$.
          \end{proof}

          \subsection{Non diagonalizable case}\label{subsec:nondiag}
          The diagonalizability hypothesis can be relaxed using different  strategies. We first 
          propose to rely on a well-known result by Crouzeix \cite{crouzeix2007numerical} based on the numerical range. Then, we 
          discuss another approach consisting
          in estimating the minimax  approximation error on the Jordan canonical form.
          \subsubsection{Numerical range}
          In the spirit of the results found in \cite{benzi2014decay}, we 
          can give an alternative formulation that avoids the requirement
          of diagonalizability. The price to pay consists in having to estimate
          the minimax error bound on a set larger than the spectrum.        
          To be precise, we need to consider the numerical range of the matrix $A$. 
          
          \begin{definition}
            Let $A$ be a matrix in $\mathbb C^{m \times m}$. We define 
            its numerical range $\mathcal W(A)$ as the set 
            \[
              \mathcal W(A) = \left\{ x^* A x \ | \ 
              x \in \mathbb C^m, \ \ 
              \lVert x \rVert_2 = 1 \right\} \subseteq \mathbb C. 
            \]
          \end{definition}
          
          The numerical range is a compact convex subset of $\mathbb C$ which
          contains the eigenvalues of $A$. When $A$ is normal 
          $\mathcal W(A)$ is exactly the convex hull of the
          eigenvalues of $A$. Moreover, it has a strict connection with the evaluation of matrix functions \cite{crouzeix2007numerical}, 
          which is described by the following result. 
          
          \begin{theorem}[Crouzeix]\label{thm:crouzeix}
          There is a universal constant $2\leq \mathcal C\leq 11.08$ such that, given $A\in\mathbb{C}^{m\times m}$,
          and a continuous function $g(z)$ on 
          $\mathcal W(A)$, analytic in its interior,  the following inequality holds:
          \[
          \lVert g(A)\rVert_2\leq \mathcal C \cdot \lVert g(z)\rVert_{\infty, \mathcal W(A)}.
          \]
          \end{theorem}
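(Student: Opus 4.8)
The plan is to establish that $\mathcal W(A)$ is a $\mathcal C$-spectral set for $A$ with a universal $\mathcal C$, following the circle of ideas around Crouzeix's theorem. First I would reduce to a convenient geometric situation. By the Toeplitz--Hausdorff theorem the numerical range $\mathcal W(A)$ is compact and convex, and by a routine approximation argument (dilating $\mathcal W(A)$ slightly, smoothing its boundary, and then letting the dilation tend to the identity) it suffices to treat the case in which $\Omega := \mathcal W(A)$ is a bounded convex domain with smooth boundary and $g$ is analytic on a neighbourhood of $\overline\Omega$. In that situation the holomorphic functional calculus is represented by the Cauchy integral
\[
g(A) = \frac{1}{2\pi i}\int_{\partial\Omega} g(\zeta)\,(\zeta I - A)^{-1}\, d\zeta ,
\]
and the whole problem becomes controlling the $2$-norm of this operator-valued integral in terms of $\lVert g\rVert_{\infty,\Omega}$.

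The geometric heart of the argument is the convexity of $\Omega$. If $n(\zeta)$ denotes the outward unit normal at $\zeta \in \partial\Omega$, then the supporting hyperplane property of convex sets gives $\operatorname{Re}\big[\,\overline{n(\zeta)}(\zeta - z)\,\big] \ge 0$ for every $z \in \overline\Omega$. Writing the Cauchy kernel $d\zeta/(2\pi i(\zeta - z))$ in terms of $n(\zeta)$ and arc length, this sign condition makes the real part of the kernel a nonnegative measure on $\partial\Omega$ of total mass $1$. I would then introduce, alongside the Cauchy transform $(\mathcal C g)(z) = \frac{1}{2\pi i}\int_{\partial\Omega} g(\zeta)(\zeta - z)^{-1}\, d\zeta$ (which reproduces $g$ inside $\Omega$), its conjugate companion obtained by replacing the kernel with its complex conjugate. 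The positivity just described is exactly what is needed to show that this conjugate transform is a contraction on $\Omega$, i.e.\ it maps functions bounded by $\lVert g\rVert_{\infty,\Omega}$ to functions with the same bound.

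With these tools the strategy is to prove an operator inequality of the form $\lVert g(A)\rVert_2 \le \lVert g\rVert_{\infty,\Omega} + \lVert \psi_g\rVert_{\infty,\Omega}$, where $\psi_g$ is the conjugate-transform companion of $g$; combined with the contraction bound $\lVert \psi_g\rVert_{\infty,\Omega}\le \lVert g\rVert_{\infty,\Omega}$ this already yields a universal constant (namely $\mathcal C = 2$ at the level of this crude splitting). To descend to an explicit admissible value such as $11.08$ one sharpens the estimate: testing the associated bilinear form against pairs of vectors and optimising a resulting quadratic inequality (a completing-the-square on $\langle g(A)x, y\rangle$ using $g$ and $\psi_g$ simultaneously) improves the constant, and tracking the numerology of these optimisations is what pins down a concrete value. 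Finally, the lower bound $\mathcal C \ge 2$ is not proved but exhibited: the family of nilpotent Jordan blocks provides explicit examples for which the ratio $\lVert g(A)\rVert_2 / \lVert g\rVert_{\infty,\mathcal W(A)}$ approaches $2$, so no universal constant below $2$ can work.

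I expect the main obstacle to be the key operator inequality linking $\lVert g(A)\rVert_2$ to the two sup-norms. The qualitative fact that $\mathcal W(A)$ is a $K$-spectral set for some finite $K$ is already nontrivial, and turning the kernel positivity into a genuine bound on a \emph{matrix}-valued integral (rather than a scalar one) requires handling the noncommutativity between $(\zeta I - A)^{-1}$ and the scalar kernel carefully, typically via a square-root factorisation of an associated nonnegative function or via the calculus of the self-adjoint and skew-Hermitian parts of $A$. Extracting a small explicit constant from this machinery is delicate, and it is precisely the part where Crouzeix's bound $11.08$ (and the later refinement towards $1+\sqrt2$) demanded substantial work; in the present paper the result is invoked only to remove the diagonalisability hypothesis, so any admissible universal constant suffices.
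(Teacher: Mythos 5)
The first thing to note is that the paper does not prove this statement at all: it is Crouzeix's theorem, quoted from \cite{crouzeix2007numerical} and used purely as a black box to remove the diagonalizability hypothesis in the corollary that follows it. So the comparison here is not with a proof in the paper but with the literature your sketch implicitly reconstructs, and on that score your outline is not the argument that produces the constant $11.08$. Crouzeix's original 2007 proof proceeds by conformal transplantation onto model domains, explicit norm estimates for the resulting singular integral operators, and a partly numerical final optimization; the route you describe --- positivity of the real part of the Cauchy kernel on a convex boundary, a conjugate Cauchy transform $\psi_g$, and a quadratic optimization --- is the later Crouzeix--Palencia argument, which yields the constant $1+\sqrt{2}$.

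More importantly, there is a genuine gap, and it sits exactly at the step you call ``crude''. The inequality you propose, $\lVert g(A)\rVert_2 \le \lVert g\rVert_{\infty,\Omega} + \lVert \psi_g\rVert_{\infty,\Omega}$, combined with the contraction property $\lVert \psi_g\rVert_{\infty,\Omega}\le\lVert g\rVert_{\infty,\Omega}$, would give $\mathcal C = 2$; that is precisely Crouzeix's conjecture, which remains open, so this step cannot hold as stated. What the kernel positivity (for convex $\Omega\supseteq\mathcal W(A)$ the density $\tfrac1\pi\operatorname{Im}\frac{d\zeta}{\zeta-z}$ is nonnegative with total mass $2$, and the operator analogue $2\operatorname{Re}\bigl[\tfrac{1}{2\pi i}(\zeta I-A)^{-1}d\zeta\bigr]$ is a positive operator-valued measure) actually gives is a bound on a sum of \emph{operators}, namely $\lVert g(A) + \psi_g(A)^*\rVert_2 \le 2\lVert g\rVert_{\infty,\Omega}$ --- not a bound on $\lVert g(A)\rVert_2$ by a sum of sup-norms. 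To peel off $\lVert g(A)\rVert_2$ from this you would need to control $\lVert \psi_g(A)^*\rVert_2=\lVert \psi_g(A)\rVert_2$ in terms of $\lVert\psi_g\rVert_{\infty,\Omega}$; but a bound of the form $\lVert h(A)\rVert_2\le \mathcal C\,\lVert h\rVert_{\infty,\Omega}$ for holomorphic $h$ is exactly the spectral-set property you are trying to establish, so the argument is circular. Crouzeix and Palencia break this circularity with an additional, genuinely operator-theoretic bootstrap on the optimal constant (exploiting that products such as $\psi_g\cdot g$ stay inside the holomorphic calculus), and even then the constant obtained is $1+\sqrt 2$, not $2$; nothing playing this role appears in your sketch, and your completing-the-square paragraph only gestures at it. Your lower-bound remark is fine: the $2\times 2$ nilpotent Jordan block has numerical range equal to a disc of radius $\tfrac12$ while $g(z)=z$ gives $\lVert g(A)\rVert_2=1$, so $\mathcal C\ge 2$. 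Given the depth of the result, the paper's choice --- state it with a citation and use it as a black box --- is the appropriate one.
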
    
          Whenever the numerical range
          $\mathcal W(A)$ has a logarithmic capacity 
          smaller than $1$ it is possible to extend 
          Theorem~\ref{thm:krylov-decay}.
          \begin{corollary}
          Let  $A\in\mathbb{C}^{m\times m}$ be such that the field of values
          $\mathcal W(A)$ is enclosed by $(\rho,R_{\mathcal{W(A)}},\mathcal V_{\mathcal W(A)})$, $\rho\in(0,1)$ and $b\in\mathbb{C}^m$. Moreover,
                   let
                   $U$ be the matrix whose columns span the $n$-th Krylov 
                   subspace $\mathcal{K}_n(A,b)$:
                   \[
                     U = \left[ \begin{array}{c|c|c|c|c}
                      b & Ab & A^2 b & \ldots & A^{n-1} b \\
                     \end{array} \right]. 
                   \]
                   Then $\forall r\in (\rho,R_{W(A)})$  the entries of the R factor in the QR decomposition of $U$ satisfy
                   \[
                   |R_{ij}| \leq c(r)\cdot \left(\frac{\rho}{r}\right)^{i} \delta^j
                   \]
                    where $\delta=\max_{z\in C_r}|z|$ and $c(r)=\frac{\mathcal C\cdot\mathcal V_{\mathcal W(A)}}{\delta\pi(1-\frac{\rho}{r})}\cdot \norm{b}_2$.
          \end{corollary}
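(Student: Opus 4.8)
The plan is to retrace the proof of Theorem~\ref{thm:krylov-decay} verbatim up to the point where diagonalizability was invoked, and to replace that single step --- the one responsible for the factor $\kappa_s(A)$ --- by an application of Crouzeix's inequality (Theorem~\ref{thm:crouzeix}). Concretely, I would first write $QR=U$ and observe, exactly as before, that $\norm{R_{i+1:j,j}}_2$ equals the norm of the component of $u_j=A^{j-1}b$ orthogonal to $\mathcal K_i(A,b)$. Since $\mathcal K_i(A,b)$ contains every vector of the form $p(A)b$ with $\deg p\leq i-1$, this yields
\[
|R_{i+1,j}| \leq \norm{R_{i+1:j,j}}_2 \leq \min_{\deg p = i-1}\norm{(p(A)-A^{j-1})b}_2 \leq \norm{b}_2\min_{\deg p = i-1}\norm{p(A)-A^{j-1}}_2 .
\]

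The crucial difference appears now. Instead of passing to the spectral decomposition $V^{-1}AV=D$ and estimating $\norm{p(D)-D^{j-1}}_2\,\kappa_s(A)$, I would set $g(z):=p(z)-z^{j-1}$ and apply Theorem~\ref{thm:crouzeix} to the matrix $g(A)$. Since $g$ is a polynomial it is entire, hence continuous on $\mathcal W(A)$ and analytic in its interior, so the hypotheses of Crouzeix's theorem are met and
\[
\norm{p(A)-A^{j-1}}_2 = \norm{g(A)}_2 \leq \mathcal C\,\norm{g}_{\infty,\mathcal W(A)} = \mathcal C\,\norm{p(z)-z^{j-1}}_{\infty,\mathcal W(A)} .
\]
This is exactly the device that removes the diagonalizability hypothesis: the universal constant $\mathcal C\in[2,11.08]$ takes over the role that $\kappa_s(A)$ played in the diagonalizable case.

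It then remains to bound the minimax quantity $\min_{\deg p=i-1}\norm{p(z)-z^{j-1}}_{\infty,\mathcal W(A)}$, and here I would invoke Lemma~\ref{lem:minimax-approx} with $f(z)=z^{j-1}$. By hypothesis $\mathcal W(A)$ is enclosed by $(\rho,R_{\mathcal W(A)},\mathcal V_{\mathcal W(A)})$, so there is a Jordan region $F'$ with $\lc(F')=\rho$, total rotation $\mathcal V_{\mathcal W(A)}$, $R_{F'}=R_{\mathcal W(A)}$ and $\mathcal W(A)\subseteq F'$. The polynomial $p_{i-1}$ furnished by the lemma satisfies $\norm{z^{j-1}-p_{i-1}(z)}_{\infty,F'}\leq \frac{M(r)\mathcal V_{\mathcal W(A)}}{\pi(1-\rho/r)}(\rho/r)^{i}$ for every $r\in(\rho,R_{\mathcal W(A)})$, with $M(r)=\max_{C_r}|z|^{j-1}=\delta^{j-1}$. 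Since $\mathcal W(A)\subseteq F'$, this $p_{i-1}$ is a valid competitor in the minimum taken over $\mathcal W(A)$, and the same estimate holds with $\mathcal W(A)$ in place of $F'$. Chaining the three inequalities and using $\delta^{j-1}=\delta^{j}/\delta$ to factor out $c(r)$ produces the claimed bound.

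The argument is essentially mechanical once the right substitution is identified; the only place demanding genuine care is the application of Crouzeix's theorem, where one must confirm that $g$ meets the analyticity/continuity requirement on $\mathcal W(A)$ (immediate for a polynomial) and that replacing the sharp minimax over $\mathcal W(A)$ by the lemma's polynomial built on the larger enclosing region $F'$ can only increase the bound. I do not expect a serious obstacle beyond this bookkeeping, since the geometric decay rate $(\rho/r)^i$ is inherited directly from Lemma~\ref{lem:minimax-approx} and the dependence on the column index $j$ is identical to the diagonalizable case.
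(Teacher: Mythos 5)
Your proposal is correct and matches the paper's proof, which is stated in one line: follow the steps of Theorem~\ref{thm:krylov-decay}, employing Theorem~\ref{thm:crouzeix} to bound the norm. Your write-up simply makes explicit the details the paper leaves implicit --- applying Crouzeix to the polynomial $g(z)=p(z)-z^{j-1}$, passing the minimax bound from the enclosing Jordan region $F'$ down to $\mathcal W(A)$, and recovering $c(r)$ via $\delta^{j-1}=\delta^j/\delta$ --- all of which is exactly as intended.
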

          \begin{proof} 
          Follow the same steps in the 
          proof of Theorem~\ref{thm:krylov-decay}
          employing Theorem~\ref{thm:crouzeix} to bound $R_{ij}$.
          \end{proof}
         \subsubsection{Jordan canonical form}
            
          An alternative to the above approach is to rely on the Jordan 
          canonical form in place of the eigendecomposition. 
          More precisely, we can always write any matrix
          $A$ as $A = V^{-1} J V$ with $J$ being block
          diagonal with bidiagonal blocks (the so-called Jordan blocks).
          This implies that
          the evaluation of $f(J)$ is block diagonal
          with blocks $f(J_t)$ where $f(J_t)$ have the
          following form: 
          \[
             \quad J_t=
             \begin{bmatrix}
             \lambda_t&1\\
             &\ddots&\ddots\\
             &&\ddots&1\\
             &&&\lambda_t
             \end{bmatrix}
             \in\mathbb C^{m_t\times m_t},\quad f(J_t)=\begin{bmatrix}
             f(\lambda_t)&f'(\lambda_t)&\dots&\frac{f^{(m_t-1)}(\lambda_t)}{(m_t-1)!}\\
             &\ddots&\ddots&\vdots\\
             &&\ddots&f'(\lambda_t)\\
             &&&f(\lambda_t)
             \end{bmatrix}.
          \]
          
          We can evaluate the matrix function $f(A)$
          by $f(A) = V^{-1} f(J) V$. One can
          estimate the norm $\norm{R_{i+1:j,j}}_2$ 
          as in the proof of Theorem~\ref{thm:krylov-decay}: 
           \begin{equation}\label{Rjordan}
           |R_{i+1,j}| \leq \norm{R_{i+1:j,j}}_2 \leq \min_{deg(p)=i-1}\lVert p(A) b - u_{j} \rVert_2\leq \min_{deg(p)=i-1}\lVert p(J)  - J^{j-1} \rVert_2 \cdot \kappa_s(A)_2 \cdot \lVert b\rVert_2
         \end{equation}
         where $p(J)=\diag(p(J_t))$, $J^j=\diag(J_t^j)$ and
         \begin{equation}\label{jordan}
         p(J_t)-J_t^j=\begin{bmatrix}
                  p(\lambda_t)-\lambda_t^j&p'(\lambda_t)- j\lambda_t^{j-1}&\dots&\frac{p^{(m_t-1)}(\lambda_t)}{(m_t-1)!}-\frac{j!}{(j-m_t)!(m_t-1)!}\lambda_h^{j-m_t}\\
                  &\ddots&\ddots&\vdots\\
                  &&\ddots&p'(\lambda_t)- j\lambda_t^{j-1}\\
                  &&&p(\lambda_t)-\lambda_t^j
                  \end{bmatrix}.
         \end{equation}
         We can rephrase \eqref{Rjordan} as a problem of simultaneous approximation of a function and its derivatives
      \begin{lemma}
      Let $\mathcal S$ be a simply connected subset of the complex plane  and suppose that $\exists z_0\in\mathcal S$ such that each element of $\mathcal S$ can be connected to $z_0$ with a path of length less than $1$.
        Let $p(z)$ be a degree $i$
        polynomial approximating the holomorphic function
        $f'(z)$ in $\mathcal S$, such that $|f'(z) - p(z)| \leq \epsilon$ in $\mathcal S$. 
        Then there exists a polynomial $q(z)$
        of degree $i + 1$ with
        $q'(z) = p(z)$ such that 
        \[
          |q(z) - f(z)| \leq \epsilon \qquad z \in \mathcal S,
        \]
      \end{lemma}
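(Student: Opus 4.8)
The plan is to take $q$ to be an antiderivative of $p$, fixing the one free constant so that $q$ and $f$ agree at the distinguished point $z_0$. Since $p$ has degree $i$, every antiderivative is a polynomial of degree $i+1$ satisfying $q'=p$, so the only remaining freedom is the additive constant; I would pin it down by imposing $q(z_0)=f(z_0)$. This immediately settles the degree bookkeeping and the relation $q'=p$ required by the statement.

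The core of the argument is to rewrite the error $q(z)-f(z)$ as a contour integral of the error in the derivatives. For a fixed $z\in\mathcal S$, choose a path $\gamma$ from $z_0$ to $z$ lying in $\mathcal S$ of length strictly less than $1$, which exists by hypothesis. Because $\mathcal S$ is simply connected and both $p$ (a polynomial, hence entire) and $f'$ are holomorphic there, the integrals $\int_\gamma p\,dw$ and $\int_\gamma f'\,dw$ depend only on the endpoints and evaluate, by the fundamental theorem for holomorphic functions, to $q(z)-q(z_0)$ and $f(z)-f(z_0)$ respectively. Subtracting these and using $q(z_0)=f(z_0)$ gives
\[
q(z)-f(z)=\int_\gamma\bigl(p(w)-f'(w)\bigr)\,dw.
\]

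Finally I would apply the standard $ML$ estimate to this last integral: its modulus is bounded by the length of $\gamma$ times the supremum of $|p-f'|$ along $\gamma$. The supremum is at most $\epsilon$ since $\gamma\subseteq\mathcal S$, and the length is less than $1$, so $|q(z)-f(z)|<\epsilon$, which is even marginally stronger than the claimed bound. Since $z\in\mathcal S$ was arbitrary, this proves the estimate everywhere on $\mathcal S$.

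The only point requiring care — rather than a genuine obstacle — is the path independence invoked in the middle step: it needs the connecting path of length $<1$ to actually lie inside $\mathcal S$, so that the hypothesis $|f'-p|\le\epsilon$ applies along it, and it needs $\mathcal S$ to be simply connected, so that the holomorphic primitive is single valued and the two integrals are path independent. Both conditions are supplied directly by the hypotheses of the lemma, and the length-less-than-one condition is precisely what lets the $ML$ bound inherit the error constant $\epsilon$ without any loss.
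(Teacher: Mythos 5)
Your proposal is correct and essentially identical to the paper's own proof: the paper also defines $q(z) = f(z_0) + \int_{\gamma} p$ (i.e.\ the antiderivative of $p$ normalized so that $q(z_0)=f(z_0)$, with path independence guaranteed by simple connectedness), writes the error as $\int_{\gamma}\bigl(f'(w)-p(w)\bigr)\,dw$, and bounds it by the $ML$ estimate using the length-less-than-one hypothesis. No gaps to report.
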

      
      \begin{proof}
        Define $q(z)$ as follows:
        \[
          q(z) = f(z_0) + \int_{\gamma} p(z), \qquad 
          \gamma \text{ any path connecting } z_0 \text{ and } z. 
        \]
        The above definition uniquely determines 
        $q(z)$, and we know that it is a polynomial
        of degree $i + 1$. 
        Given $z \in \mathcal S$ choose $\gamma$ a
        path connecting $z_0$ to $z$ with length less than $1$, we have:
        \[
          |f(z) - q(z)| = |f(z_0) + \int_{\gamma} f'(z) - 
            f(z_0) - \int_{\gamma} p(z)| \leq 
            \int_{\gamma} |f'(z) - p(z)| \leq \epsilon. 
        \]
      \end{proof}
      
       If $m_{t'}$ is the maximum size among all the Jordan blocks we can find a minimax approximating polynomial
       for the $m_{t'}$ derivative of $z^j$. The
       above Lemma guarantees that, with the latter choice,
       the matrix \eqref{jordan} has the $(i,j)$-th entry bounded in modulus by $\frac{\epsilon}{(j-i)!}$ when $j\geq i$. 
      An easy computation shows that both the $1$ and $\infty$ norms of 
      \[  
          T=\epsilon\begin{bmatrix}
                   1&1&\frac{1}{2!}&\dots&\frac{1}{(m_{t'}-1)!}\\
                    &\ddots&\ddots&\ddots&\vdots\\
                    &&\ddots&\ddots&\frac{1}{2!}\\
                    &&&\ddots&1\\
                    &&&&1
                    \end{bmatrix}
      \]
      are bounded by $\epsilon e$, where $e$ is the Napier's constant. We then have $\norm{p(J)-J^k}_2\leq\norm{T}_2\leq \sqrt{\norm{T}_1\norm{T}_{\infty}}\leq \epsilon e$. Using this relation one can prove the next result by following the same steps as in the proof of Theorem~\ref{thm:krylov-decay}.
      \begin{theorem} 
              Let  $A\in\mathbb{C}^{m\times m}$, $b\in\mathbb{C}^m$ and $F$ be the convex hull of the spectrum of $A$. Suppose that $F \subseteq B(0,1)$
              is enclosed by $(\rho,R_F,\mathcal V_F)$, $\rho\in(0,1)$ and  indicate with $m_{t'}$ the size of the largest Jordan block of $A$.  Moreover,
              let
              $U$ be the matrix whose columns span the $n$-th Krylov 
              subspace $\mathcal{K}_n(A,b)$:
              \[
                U = \left[ \begin{array}{c|c|ccc}
                 b & Ab & A^2 b & \ldots & A^{n-1} b \\
                \end{array} \right]. 
              \]
              Then $\forall r\in (\rho,R_F)$  the entries of the R factor in the QR decomposition of $U$ satisfy
                       \[|R_{ij}| \leq c(r)\cdot \kappa_{s}(A)\cdot \left(\frac{\rho}{r}\right)^{i-(m_{t'}-1)} \delta^j, \] where $\delta=\max_{z\in C_r}|z|$and $c(r)=\frac{e\cdot\mathcal V_{F}}{\delta\pi(1-\frac{\rho}{r})}\cdot \norm{b}_2$.
            \end{theorem}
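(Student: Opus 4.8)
The plan is to retrace the proof of Theorem~\ref{thm:krylov-decay}, replacing the diagonalization by the Jordan canonical form and using the entrywise bound on the Toeplitz blocks established in the discussion preceding the statement. First I would recall, exactly as in the diagonalizable case, that $\norm{R_{i+1:j,j}}_2$ equals the distance of $u_j = A^{j-1}b$ from the span of the first $i$ columns of $U$, i.e.\ from $\mathcal K_i(A,b)$. Since $\mathcal K_i(A,b)$ contains every vector $p(A)b$ with $\deg p \leq i-1$, this yields
\[
|R_{i+1,j}| \leq \norm{R_{i+1:j,j}}_2 \leq \min_{\deg p = i-1}\norm{p(A)b - A^{j-1}b}_2 \leq \kappa_s(A)\,\norm{b}_2 \min_{\deg p = i-1}\norm{p(J) - J^{j-1}}_2,
\]
which is precisely \eqref{Rjordan}. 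The task is thus reduced to choosing a single polynomial $p$ of degree $i-1$ that makes $\norm{p(J)-J^{j-1}}_2$ small.

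Because $J = \diag(J_t)$, the matrix $p(J)-J^{j-1}$ is block diagonal with blocks of the form~\eqref{jordan}, whose $(k,l)$ entry (for $l\geq k$) is $\tfrac{1}{(l-k)!}(p-g)^{(l-k)}(\lambda_t)$ with $g(z)=z^{j-1}$. Hence it suffices to approximate $g$ together with all its derivatives up to order $m_{t'}-1$ uniformly on $F$. To this end I would apply Lemma~\ref{lem:minimax-approx} to the top derivative $g^{(m_{t'}-1)}$, obtaining a polynomial of degree $i-m_{t'}$ with minimax error $\epsilon \leq \frac{M(r)\mathcal V_F}{\pi(1-\rho/r)}\big(\tfrac{\rho}{r}\big)^{\,i-(m_{t'}-1)}$, and then integrate it back up $m_{t'}-1$ times by means of the simultaneous-approximation Lemma. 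Each integration raises the degree by one, so after $m_{t'}-1$ steps I obtain a polynomial $p$ of degree $i-1$ with $|g^{(s)}(z)-p^{(s)}(z)|\leq\epsilon$ for every $0\leq s\leq m_{t'}-1$ and every $z\in F$; here I use that $F$ is convex and contained in $B(0,1)$, so any two of its points are joined by a short path as required by that Lemma. The exponent $i-(m_{t'}-1)$ in the final bound is exactly the price of having to descend $m_{t'}-1$ derivative orders.

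With this $p$, each block~\eqref{jordan} is dominated entrywise by the matrix $T$ displayed above, whence
\[
\norm{p(J)-J^{j-1}}_2 = \max_t\norm{p(J_t)-J_t^{j-1}}_2 \leq \norm{T}_2 \leq \sqrt{\norm{T}_1\norm{T}_\infty}\leq \epsilon\, e.
\]
Substituting into the reduction of the first paragraph gives
\[
|R_{ij}| \leq \kappa_s(A)\,\norm{b}_2\,\epsilon\,e \leq c(r)\,\kappa_s(A)\Big(\tfrac{\rho}{r}\Big)^{\,i-(m_{t'}-1)}\delta^{j},
\]
after collecting $M(r)$, the factor $e$, and $\norm{b}_2$ into $c(r)$ exactly as in Theorem~\ref{thm:krylov-decay}.

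I expect the main obstacle to be the bookkeeping that turns the chain of integrations into a single estimate with the correct exponent and constant: one must verify that the degree really drops to $i-m_{t'}$ for the top derivative so that Lemma~\ref{lem:minimax-approx} produces the exponent $i-(m_{t'}-1)$, and that the uniform error $\epsilon$ is preserved through every integration step, which is where the hypothesis $F\subseteq B(0,1)$ --- guaranteeing connecting paths of length below $1$ --- is essential. The remaining passage from the entrywise bound to $\norm{T}_2\leq \epsilon e$ and the identification of $M(r)$ with the factor absorbed into $\delta^{j}/\delta$ inside $c(r)$ are routine, mirroring the diagonalizable computation.
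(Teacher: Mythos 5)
Your proof retraces the paper's own argument: the paper establishes this theorem exactly by the reduction \eqref{Rjordan}, applying Lemma~\ref{lem:minimax-approx} to the $(m_{t'}-1)$-st derivative of $z^{j-1}$ with degree $i-m_{t'}$ (whence the exponent $i-(m_{t'}-1)$), integrating back up through the simultaneous-approximation lemma, bounding the Jordan blocks \eqref{jordan} by the Toeplitz matrix $T$ with $\norm{T}_2\leq\sqrt{\norm{T}_1\norm{T}_\infty}\leq\epsilon e$, and then ``following the same steps'' as Theorem~\ref{thm:krylov-decay}, including the same identification of $M(r)$ with $\delta^{j-1}$ that you call routine. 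Your write-up correctly fills in the degree bookkeeping the paper leaves implicit; the only imprecision is that the integration lemma requires a hub point $z_0\in F$ at distance less than $1$ from every point of $F$ (e.g.\ the circumcenter, which lies in $F$ by convexity and compactness), not that \emph{any two} points of $F$ are joined by a short path --- the latter can fail for convex $F\subseteq B(0,1)$ of diameter close to $2$.
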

            \subsection{Decay in the entries of the $R$ factor for Horner matrices}
            Here, we show that the two-way decay in the $R$ factor is shared by the right one in \eqref{kyl/horn}, which we have identified as 
            Horner matrix. 
                                           
            \begin{theorem}\label{thm:horner-decay}         
      Let  $A\in\mathbb{C}^{m\times m}$   be a diagonalizable matrix enclosed by $(\rho,R_A,\mathcal V_A)$, $\rho\in(0,1)$ and $b\in\mathbb{C}^m$. 
              Moreover let
               $U$ be the matrix:
              \[
                   U =\left[                              a_s b\  \vline\ (a_s A+a_{s-1}I)b\ \vline\ \dots\ \vline \ \sum_{j=0}^{s-1}a_{j+1}A^j b
                                              \right]
                                              \]
               where the finite sequence $\{a_j\}_{j=1,\dots, s}$ verifies
               \[
               |a_j|\leq  \hat \gamma \cdot \hat \rho^{j},\quad \hat \gamma>0, \quad \hat \rho\in(0,1),\qquad  j=1,\dots, s.
               \]
                Then  the R factor in the QR decomposition of $U$ is entry-wise bounded by
                                 \[|R_{ij}| \leq c\cdot \kappa_{s}(A)\cdot \left(\frac{\rho}{R_A}\right)^{i} \hat \rho^{i+(s-j)} \]
                                 where $c=\frac{\hat \rho\hat{\gamma}\mathcal V_A}{\pi(1-\hat\rho)(1-\frac{\rho}{R_A})}\norm{b}_2$.
       \end{theorem}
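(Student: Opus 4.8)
The plan is to mirror the proof of Theorem~\ref{thm:krylov-decay}, replacing the monomials $z^{j-1}$ by the Horner polynomials and exploiting the geometric decay of the coefficients $a_j$. Denote by $u_j$ the $j$-th column of $U$; by inspection $u_j=h_j(A)b$ with
\[
h_j(z)=\sum_{k=0}^{j-1} a_{s-j+1+k}\,z^k,
\]
a polynomial of degree $j-1$ whose coefficients are $a_{s-j+1},\dots,a_s$. Since $h_1,\dots,h_i$ have pairwise distinct degrees $0,1,\dots,i-1$, they span all polynomials of degree at most $i-1$; consequently $p(A)b$ lies in the span of the first $i$ columns of $U$ for every $p$ with $\deg p\le i-1$. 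Exactly as in Theorem~\ref{thm:krylov-decay}, the quantity $\norm{R_{i+1:j,j}}_2$ equals the norm of the projection of $u_j$ onto the orthogonal complement of the span of the first $i$ columns, so
\[
|R_{ij}|\le \norm{R_{i+1:j,j}}_2 \le \min_{\deg p\le i-1}\norm{\bigl(h_j-p\bigr)(A)\,b}_2 .
\]

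To produce a competitor $p$ I would approximate each high-degree monomial of $h_j$ separately. For $k\ge i$ let $p_k$ be the degree-$(i-1)$ polynomial furnished by Lemma~\ref{lem:minimax-approx} applied to $z^k$ on the Jordan region $F$ enclosing the eigenvalues of $A$ (with $\lc(F)=\rho$, total rotation $\mathcal V_A$ and $R_F=R_A$), and set
\[
p(z)=\sum_{k=0}^{i-1} a_{s-j+1+k}\,z^k + \sum_{k=i}^{j-1} a_{s-j+1+k}\,p_k(z),
\qquad\text{so}\qquad
h_j-p=\sum_{k=i}^{j-1} a_{s-j+1+k}\,(z^k-p_k).
\]
Writing a spectral decomposition $A=VDV^{-1}$, passing the factor $\norm{V}_2\norm{V^{-1}}_2$ through and taking the infimum over $V$ bounds the right-hand side by $\kappa_s(A)\norm{b}_2\,\max_{\lambda}|(h_j-p)(\lambda)|$, the maximum being over the eigenvalues $\lambda$ of $A$, which is in turn $\le \kappa_s(A)\norm{b}_2\,\norm{h_j-p}_{\infty,F}$.

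It then remains a triangle-inequality estimate combined with two decays. Lemma~\ref{lem:minimax-approx} gives $\norm{z^k-p_k}_{\infty,F}\le \frac{\mathcal V_A}{\pi(1-\rho/r)}\,\delta^k\,(\rho/r)^{i}$ with $\delta=\max_{z\in C_r}|z|$ (here $M(r)=\max_{C_r}|z|^k=\delta^k$), while the hypothesis yields $|a_{s-j+1+k}|\le\hat\gamma\,\hat\rho^{\,s-j+1+k}$. Choosing $r=R_A$ makes $C_{R_A}$ lie in the closed unit disc, hence $\delta\le1$, so
\[
\sum_{k=i}^{j-1}|a_{s-j+1+k}|\,\delta^k
\le \hat\gamma\,\hat\rho^{\,s-j+1}\sum_{k=i}^{j-1}\hat\rho^{\,k}
\le \frac{\hat\gamma\,\hat\rho}{1-\hat\rho}\,\hat\rho^{\,i+(s-j)} .
\]
Multiplying by $\frac{\mathcal V_A}{\pi(1-\rho/R_A)}(\rho/R_A)^{i}$ and by $\kappa_s(A)\norm{b}_2$ reproduces the claimed bound, with the extra factor $\hat\rho$ absorbed into the constant $c$. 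The only genuinely new points with respect to Theorem~\ref{thm:krylov-decay} are the identification of the column span with the Krylov space $\mathcal K_i(A,b)$ and the monomial-by-monomial splitting; the degenerate situations ($a_s=0$ or linearly dependent Krylov vectors) only shrink the span and hence cannot increase the distance, so they do not affect the upper bound and I would dispose of them with a remark rather than a separate argument. The main obstacle is purely bookkeeping: keeping the reindexing $a_{s-j+1+k}$ straight so that the exponent of $\hat\rho$ lands exactly on $i+(s-j)$ after the geometric summation.
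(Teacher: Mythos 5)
In the main case ($a_s\neq 0$) your proof is the paper's proof: you identify the $j$-th column as $h_j(A)b$, note that the first $i$ columns then span $\{p(A)b:\deg p\leq i-1\}$, absorb the low-degree part of $h_j$ into the minimizer, approximate each remaining monomial $z^k$, $k\geq i$, by Lemma~\ref{lem:minimax-approx} with $r=R_A$ (so that $\delta\leq 1$), and sum the geometric series in $\hat\rho$; the reindexing and the resulting exponents $(\rho/R_A)^i\hat\rho^{\,i+(s-j)}$ land exactly where the paper's do.

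The one genuine flaw is how you dispose of the degenerate case. You assert that $a_s=0$ ``only shrinks the span and hence cannot increase the distance.'' That is backwards: shrinking the subspace onto which you project can only \emph{increase} the norm of the orthogonal component, and correspondingly shrinking the set of competitor polynomials can only increase the minimum. Concretely, if $a_s=\dots=a_{s-t+1}=0$ and $a_{s-t}\neq 0$, then every $h_j$ has degree at most $j-t-1$, the first $i$ columns of $U$ only span $\{p(A)b:\deg p\leq i-t-1\}$, and running your argument with this smaller space yields a bound proportional to $(\rho/R_A)^{i-t}\hat\rho^{\,i-t+(s-j)+1}$, which is weaker than the claimed bound by the factor $\left(\frac{R_A}{\rho\hat\rho}\right)^{t}>1$. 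So the remark does not close this case. (Linear dependence of the Krylov vectors, by contrast, is genuinely harmless: the orthonormal columns produced by the QR factorization span a superset of the column span, so the projection bound survives.) The paper's fix for $a_s=0$ is different and does work: the $t$ zero columns have identically zero entries in $R$, so one drops them and applies the non-degenerate case to what remains, which is the Horner matrix of the truncated sequence $a_1,\dots,a_{s-t}$ with nonzero leading coefficient. This recovers exactly the stated bound because the column exponent enters only through $s-j$, which is invariant when both $s$ and $j$ are shifted by $t$. Your proof needs this reindexing observation (or an equivalent one) in place of the ``smaller span'' remark.
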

      \begin{proof}
      Here we assume that $a_{s} \neq 0$. This is not restrictive because
      if $j < s$ is the largest $j$ such that $a_{j'} = 0$ for any $j' > j$
      the first $s - j$ columns of $U$ are zero, and can be ignored.
      Observe that the $j$-th column of $U$ is of the form $q(A) b$ where $q$ is the polynomial defined by the coefficients $a_j$ in reversed order, i.e., 
      \[
        q(x) := \sum\limits_{n=0}^{j-1}a_{s-j+1+n}x^n.
      \]
      The subspace spanned by the first $i$ columns of $U$ contains all the vectors of the form $p(A) b$ where $p$ is a polynomial of degree at most $i-1$.
      With the same argument used for proving Theorem~\ref{thm:krylov-decay}  we  can bound the entries of $R$ in this way
      \[
      |R_{ij}|\leq \min_{deg(p)=i-1}\norm{p(D)-\sum_{n=0}^{j-1}a_{s-j+1+n}D^{n}}_2\cdot  \kappa_s(A)\cdot \norm{b}_2.
      \]
      Moreover
      \begin{align*}
      \min_{deg(p)=i-1}\norm{p(D)-\sum_{n=0}^{j-1}a_{s-j+1+n}D^n}_2&=  \min_{deg(p)=i-1}\norm{p(D)-\sum_{n=i}^{j-1}a_{s-j+1+n}D^n}_2\\
      &\leq \sum_{n=i}^{j-1}|a_{s-j+1+n}|\min_{deg(p)=i-1}\norm{p(D)-D^n}_2\\
      &\leq \sum_{n=i}^{j-1}\hat \gamma\hat \rho^{s-j+1+n}\min_{deg(p)=i-1}\norm{p(D)-D^n}_2\\
      &\underbrace{\leq}_{\text{Lemma~\ref{lem:minimax-approx}}}\sum_{n=i}^{j-1}\hat \gamma\hat \rho^{s-j+1+n} \frac{\mathcal V_A}{\pi(1-\frac{\rho}{R_A})}\left(\frac{\rho}{R_A}\right)^i\\
      &\leq \frac{\hat \rho\hat{\gamma}\mathcal V_A}{\pi(1-\hat\rho)(1-\frac{\rho}{R_A})}  \hat\rho^{s-j+i}\left(\frac{\rho}{R_A}\right)^i,
      \end{align*}
      where we used Lemma~\ref{lem:minimax-approx} with $r=R_A$. 
       \end{proof}   
       \begin{remark}
       In view of the above arguments we can rephrase 
       Theorem~\ref{thm:krylov-decay} for non diagonalizable matrices. We obtain similar statements involving $\lc(\mathcal W(A))$ in place of $ \lc(A)$ or with a shifted column decay. The same technique can be used to generalize the results of the next sections. 
       The proofs and statements are analogous to the diagonalizable case. Therefore, we do not report them.
                    \end{remark}
     \subsection{Decay in the singular values of Krylov/Horner outer products}   \label{sec:krylovhornersingdecay}
        
        \subsubsection{Some preliminaries}
        In what follows, we indicate with $\Pi_m$ the counter identity of order $m$:
        \[
        \Pi_m:=\begin{bmatrix}
        &&1\\
        &\iddots\\
        1
        \end{bmatrix}\in\mathbb R^{m\times m},
        \]
        which is the matrix which flips the columns.
        
        Due to technical reasons, we also need to introduce the following quantity.
        \begin{definition}
Given $A\in\mathbb C^{m\times m}$ enclosed by $(\rho,R_{A},\mathcal V_{A})$ and a parameter $R\in\mathbb R^+$ we define 
        \[
        \Lambda(\rho,R_{A},\mathcal V_{A},R):=\frac{\mathcal V_{A}^2}{\pi^2(R-1)(1-\frac{\rho}{R_{A}})\sqrt{1-(\frac{\rho}{R R_A})^2}}\cdot \min_{\rho< r<R_A}\frac{1}{\delta(r)(1-\delta(r)^2)(\frac{r}{\rho}-1)\sqrt{(1-\frac{\rho^2}{r^2})}},
        \]
        where $\delta(r):=\max\{\frac{1}{R},\max_{C_r}|z|\}$.
                \end{definition}
        \subsubsection{The estimates}
        Now, we have all the ingredients for studying  the singular values of Krylov/Horner outer products. 
        For simplicity we state a result in the diagonalizable case, but we highlight that it is easy to recover analogous estimates for the general framework employing the techniques of Section~\ref{subsec:nondiag}.
       \begin{theorem}\label{thm:kryl-horn-decay}
       Let $b_1\in\mathbb C^{m}$, $b_2\in\mathbb C^{n}$ and  $A_1\in\mathbb C^{m\times m},A_2\in\mathbb C^{n\times n}$ be two diagonalizable matrices enclosed by $(\rho,R_{A},\mathcal V_{A})$ with $\rho\in(0,1)$.
       Then for any finite sequence $\{a_j\}_{j=1,\dots, s}$ which verifies
   \[
      |a_j|\leq  \hat \gamma\cdot  R^{-j}, \quad R>1,\quad  j\in\{1,\dots, s\}, 
   \]
      the singular values of
   \begin{equation}\label{horn-outer}
         	X=\left[ \begin{array}{c|c|cc}
      b_1 & A_1b_1 &  \ldots & A_1^{s-1} b_1 \\
     \end{array} \right]\cdot
   \left[  \sum_{j=0}^{s-1}a_{j+1}A_2^j b_2                            \ \vline\ \dots\ \vline\ (a_s A_2+a_{s-1}I)b_2\  \vline \ 
                        a_s b_2 \right]^t                                                                                     \end{equation}
                        can be bounded by
        \begin{align*}
        \sigma_{l}(X) &\leq  \gamma\cdot e^{-(\alpha+\alpha') (l+1)},\qquad \alpha=\log\left(\frac{R_A}{\rho}\right),\qquad   \alpha'=\log\left(R\right),
        \end{align*}
        where $ \gamma :=\hat{\gamma}\cdot\kappa_s(A_1) \kappa_s(A_2)\norm {b_1}_2\norm{b_2}_2\cdot \Lambda(\rho,R_{A},\mathcal V_{A},R)$.
        \end{theorem}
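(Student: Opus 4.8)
The plan is to reduce the statement to a bound on the singular values of a product of two triangular factors whose entries decay, and then extract a low-rank approximation by an Eckart--Young (best approximation) argument. First I would write $X = U\,W^t$, where $U=[\,b_1\mid A_1 b_1\mid\dots\mid A_1^{s-1}b_1\,]$ is a Krylov matrix for $(A_1,b_1)$ and $W=[\,\sum_{j}a_{j+1}A_2^jb_2\mid\dots\mid a_sb_2\,]$ is a Horner matrix for $(A_2,b_2,\{a_j\})$. Taking thin QR factorizations $U=Q_UR_U$ and $W=Q_WR_W$, with $Q_U,Q_W$ having orthonormal columns, we get $X=Q_U R_U R_W^t Q_W^t$, and since multiplication by matrices with orthonormal columns does not change singular values, $\sigma_l(X)=\sigma_l(R_UR_W^t)$. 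This moves all the relevant structure into the two $R$ factors.

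Second, I would invoke the entry-wise decay estimates already proved. Theorem~\ref{thm:krylov-decay} gives, for every $r\in(\rho,R_A)$, that $|(R_U)_{ij}|\leq c_U(r)\,\kappa_s(A_1)\,(\rho/r)^i\,\delta^j$ with $\delta=\max_{C_r}|z|$; and Theorem~\ref{thm:horner-decay}, applied with $\hat\rho=1/R$ (so that the present hypothesis $|a_j|\leq\hat\gamma R^{-j}$ matches its requirement), gives $|(R_W)_{kj}|\leq c_W\,\kappa_s(A_2)\,(\rho/R_A)^k\,\hat\rho^{\,k+(s-j)}$. The key remark is that the rows of $R_W$ decay geometrically at rate $(\rho/R_A)\hat\rho=\rho/(R_AR)=e^{-(\alpha+\alpha')}$, which is exactly the target rate, and that this is the faster of the two row decays (since $r<R_A$ and $R>1$), so the optimal truncation acts on the Horner factor.

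Third --- the heart of the proof --- I would build a low-rank approximant by truncating $R_W$. Let $R_W^{(\leq l)}$ keep only its first $l$ rows and $R_W^{(>l)}$ the complementary tail. Then $N:=R_U\,(R_W^{(\leq l)})^t$ has rank at most $l$, whence
\[
\sigma_{l+1}(R_UR_W^t)\leq \norm{R_U\,(R_W^{(>l)})^t}_2\leq \norm{R_U}_F\,\norm{R_W^{(>l)}}_F .
\]
I would then estimate the two Frobenius norms by summing the geometric bounds above: $\norm{R_U}_F$ over all rows and columns, and $\norm{R_W^{(>l)}}_F$ over the tail rows $k>l$, which yields the factor $((\rho/R_A)\hat\rho)^{l+1}=e^{-(\alpha+\alpha')(l+1)}$ together with the convergent constant $1/\sqrt{1-(\rho/(R_AR))^2}$. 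Collecting everything, the row sum in $i$ and the tail sum in $k$ produce the denominators $\sqrt{1-\rho^2/r^2}$, $(r/\rho-1)$ and $\sqrt{1-(\rho/(R_AR))^2}$ of $\Lambda(\rho,R_A,\mathcal V_A,R)$, while the two column sums (rate $\delta$ coming from $R_U$ and rate $\hat\rho$ coming from $R_W$) are bounded \emph{simultaneously} by the choice $\delta(r)=\max\{1/R,\max_{C_r}|z|\}$, since $\delta(r)$ dominates both; the parameter $r$, on which only the constant $\norm{R_U}_F$ and not the decay rate depends, is then optimized by the $\min_{\rho<r<R_A}$ in $\Lambda$. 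Setting $\gamma=\hat\gamma\,\kappa_s(A_1)\kappa_s(A_2)\norm{b_1}_2\norm{b_2}_2\,\Lambda$ gives the claimed bound.

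I expect the main obstacle to be the bookkeeping in the last step: propagating two \emph{different} entry-wise decays through the product, cleanly separating the pure exponential rate $e^{-(\alpha+\alpha')(l+1)}$ from the $r$-dependent constants, and checking that $\delta(r)$ really unifies the two column decays so that all surviving factors assemble into $\Lambda$. A secondary point is the index convention: one must verify that the tail starting at row $l+1$ produces the exponent $(l+1)$ in the statement, matching $\sigma_l$ to the bound after the harmless reindexing inherent in the Eckart--Young estimate.
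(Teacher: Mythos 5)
Your overall strategy coincides with the paper's: take QR factorizations of the Krylov and Horner factors, import the entrywise decays of Theorems~\ref{thm:krylov-decay} and~\ref{thm:horner-decay}, and extract the singular value bound by a rank-$l$ truncation estimated in Frobenius norm. (The paper zeroes out the first $l-1$ \emph{columns} of the product $S=R_U\Pi_s R_V^*$ rather than the tail \emph{rows} of the Horner $R$ factor, but the two truncations are equivalent in spirit and assemble the same kind of constant.) However, your second step contains a genuine flaw: you apply Theorem~\ref{thm:horner-decay} to the QR factorization of $W=\left[\sum_j a_{j+1}A_2^jb_2\mid\dots\mid a_sb_2\right]$, i.e.\ to the second factor of $X$ \emph{in the order in which it appears there}. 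Theorem~\ref{thm:horner-decay} is stated for the Horner matrix with the columns in the opposite order, $V=\left[a_sb_2\mid (a_sA_2+a_{s-1}I)b_2\mid\dots\mid \sum_j a_{j+1}A_2^jb_2\right]=W\Pi_s$, and a QR factorization is \emph{not} invariant under column permutation: $R_W$ and $R_V$ are genuinely different triangular matrices, with different $Q$ factors. Indeed the bound you quote is false for $R_W$: its $(1,1)$ entry equals $\lVert \sum_j a_{j+1}A_2^jb_2\rVert_2$, which (take for instance $a_1$ of the maximal allowed size $\hat\gamma R^{-1}$) stays of order $\hat\gamma R^{-1}\lVert b_2\rVert_2$ independently of $s$, whereas the bound $(\rho/R_A)^k\hat\rho^{\,k+(s-j)}$ at $(k,j)=(1,1)$ forces it to be $O(R^{-s})$, arbitrarily small as $s$ grows. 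The columns of $W$ \emph{decrease} in norm, so the small entries of its $R$ factor sit in the late columns, exactly opposite to where your bound places them.

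The missing ingredient is the paper's counter-identity device: write $X=U\Pi_sV^t$ with $V$ ordered as in Theorem~\ref{thm:horner-decay}, and use that $\Pi_s$ is orthogonal, so that $\sigma_l(X)=\sigma_l(R_U\Pi_sR_V^t)$. After this correction your third step goes through essentially verbatim: keeping only the first $l$ rows of $R_V$ still yields a rank-$l$ approximant of $R_U\Pi_sR_V^t$, since the permutation changes neither ranks nor spectral norms, and
\[
\sigma_{l+1}(X)\ \leq\ \norm{R_U\Pi_s\bigl(R_V^{(>l)}\bigr)^t}_2\ \leq\ \norm{R_U}_F\,\norm{R_V^{(>l)}}_F,
\]
which, after summing the geometric bounds (with $\delta(r)=\max\{1/R,\max_{C_r}|z|\}$ dominating both column rates, as you indicate), produces the rate $e^{-(\alpha+\alpha')(l+1)}$ and a constant that matches $\Lambda$ up to minor bookkeeping in powers of $\delta$. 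So the plan is sound and is in substance the paper's proof; the mishandled column reversal of the Horner factor is the one point where, as written, the argument fails.
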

       \begin{proof}
      Consider the matrices $U$ and $V$ defined as follows: 
      \[
        U =
          \left[ \begin{array}{c|c|cc}
        b_1 & A_1b_1 &  \ldots & A_1^{s-1} b_1 \\
        \end{array} \right], \quad 
        V = 
        \left[ a_s b_2  \ \vline\ (a_s A_2+a_{s-1}I)b_2\
        \vline\ \dots\  
        \vline \ \sum_{j=0}^{s-1}a_{j+1}A_2^j b_2 
        \right], 
      \]
      so that 
       we have $X = U \Pi_s V^t$ as in Equation~\eqref{horn-outer}. Moreover, let $(Q_U, R_U)$ and 
   $(Q_V, R_V)$  be the QR factorizations of $U$ and $V$ respectively. Applying Theorem~\ref{thm:krylov-decay} and Theorem~\ref{thm:horner-decay} we get that $\forall r\in(\rho,R_{A})$ 
   \[
   |R_{U,ij}| \leq c_1(r)\cdot e^{-\eta i - \beta j}\qquad \text{and}\qquad |R_{V,ij}| \leq c_2\cdot  e^{-(\alpha+\alpha') i - \beta (s-j)},
   \]
   with $\eta =\log\left(\frac{r}{\rho}\right)$,  $\beta=|\log(\delta)|$, $c_1(r)=\frac{\mathcal V_{A_1}}{\delta\pi(1-\frac{\rho}{r})}\cdot \kappa_s(A_1)\cdot \norm {b_1}_2$ and $c_2=\frac{\hat \rho\hat{\gamma}\mathcal V_{A_2}}{\pi(1-\hat\rho)(1-\frac{\rho}{R_{A}})}\kappa_s(A_2)\norm{b_2}_2$.
   
   In order to bound 
   the singular values of $X$ we look at those of $S = R_U \Pi_s R_V^*$. 
   The entry $(i,j)$ of $S$ is obtained as 
   the sum: \[
   S_{ij} = \sum_{h = 1}^s R_{U,ih} \cdot R_{V,j(s-h)}, \qquad 
   | R_{U,ih} \cdot R_{V,j(s-h)} | \leq 
   c\cdot e^{-\eta i-(\alpha+\alpha')j - 2\beta h},
   \] 
   where $c=c_1(r)\cdot c_2$.
   Summing
   all the bounds on the addends we obtain 
   \[
   |S_{ij}| \leq \frac{c}{1-e^{-2\beta }}  e^{-\eta i-(\alpha+\alpha') j}.
   \]
   We can estimate the $l$-th singular value
   by setting the first $l-1$ columns of $S$ to zero.
   Let $S_l$ be the matrix composed by the last $m - l+1$
   columns of $S$. Since this matrix can be seen as the residue of 
   a particular choice
   for a rank $l-1$ approximation of $S$ we have 
   $
    \sigma_{l}(S) \leq \norm{S_l}_2        
   $. The entries of $S_l$ satisfy
   the relation $(S_l)_{ij} \leq \tilde \gamma e^{-(\alpha+\alpha') l} e^{-\eta i-(\alpha+\alpha')(j-1))}$ where $\tilde \gamma=\frac{c}{1-e^{-2\beta}}$, so we obtain:
   \begin{align*}
     \norm*{\frac{e^{(\alpha+\alpha') l}}{\tilde \gamma} S_l}_F^2 &=  \sum_{i = 1}^{m-l} \sum_{j = 1}^n |\frac{e^{(\alpha+\alpha') l}}{\tilde \gamma}(S_k)_{i,j}|^2 
     \leq \frac{e^{-2\eta}}{(1-e^{-2\eta})(1-e^{(-\alpha+\alpha')})}. 
   \end{align*}
       Since $\norm{S_l}_2 \leq \norm{S_l}_F$ we 
       have $\sigma_{l}(S) \leq \frac{\tilde \gamma e^{-\eta}}{\sqrt{(1-e^{-2\eta})(1-e^{-2(\alpha+\alpha')})}} e^{-(\alpha+\alpha') l}=\gamma e^{-(\alpha+\alpha') l}$.
    \end{proof}
    Our final aim is to estimate the singular values of
    \eqref{kryl_negl} by estimating the singular values of one of its
    finite truncations \eqref{kyl/horn}. In order to justify that, we
    need to show that the addends in \eqref{kryl_negl} become
    negligible. Observe that the latter are outer products of two
    Krylov matrices in which the second factor appears in a reverse
    order. This means that the row-decay in its $R$ factor has an
    opposite direction. In the next result we see how this fact
    implies the negligibility.
     \begin{theorem} \label{thm:sdecay2}
    Let $U = Q_U R_U$ and 
    $V = Q_V R_V$  be QR factorizations of 
    $U \in \mathbb{C}^{m \times n}$ and
    $V \in \mathbb{C}^{m \times n}$. 
    Let 
    $\alpha, \beta$
    and $c$ be positive constants such
    that $|R_{U,ij}|,|R_{V,ij}| \leq c e^{-\alpha i - \beta j}$ for any $i,j$. Then the
    matrix $X = U\Pi_n V^*$
     has singular 
    values bounded
    by 
    \[
      \sigma_{l}(X) \leq  \gamma e^{-\alpha (l+1)}, 
      \qquad  \gamma := \frac{c^2ne^{-(n+1)\beta}}{(1 - e^{-2\alpha})} . 
    \]
   \end{theorem}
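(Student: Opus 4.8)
The plan is to follow the template already established in the proof of Theorem~\ref{thm:kryl-horn-decay}, exploiting that singular values are invariant under left/right multiplication by matrices with orthonormal columns. Writing $X = Q_U R_U \Pi_n R_V^* Q_V^*$ and discarding the unitary factors $Q_U$ and $Q_V$, I would first reduce the problem to estimating the singular values of the $m \times m$ matrix $S := R_U \Pi_n R_V^*$, so that $\sigma_l(X) = \sigma_l(S)$ for every $l$.

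The crucial step --- and the only one that genuinely uses the reversal matrix $\Pi_n$ --- is the entrywise estimate of $S$. Expanding the product gives $S_{ij} = \sum_{h=1}^n R_{U,ih}\,\overline{R_{V,j,\,n+1-h}}$, since $\Pi_n$ pairs the $h$-th column of $R_U$ with the $(n+1-h)$-th column of $R_V$. Inserting the hypotheses $|R_{U,ih}| \leq c\,e^{-\alpha i - \beta h}$ and $|R_{V,j,n+1-h}| \leq c\,e^{-\alpha j - \beta(n+1-h)}$, the two column-decay factors combine as $e^{-\beta h}\,e^{-\beta(n+1-h)} = e^{-\beta(n+1)}$, which is \emph{independent of the summation index $h$}. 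This is exactly the effect of the reversal: the $\beta$-decay that in Theorem~\ref{thm:kryl-horn-decay} survived as a per-column factor here collapses into a single constant. Summing the $n$ identical bounds over $h$ then yields $|S_{ij}| \leq C\,e^{-\alpha(i+j)}$ with $C := n c^2 e^{-\beta(n+1)}$, so that $S$ retains a two-way exponential decay of rate $\alpha$ in both indices.

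From here the argument is routine bookkeeping. To bound $\sigma_l(S)$ I would zero out the first $l-1$ columns of $S$; calling $S_l$ the residual, it is the error of a rank-$(l-1)$ approximation, whence $\sigma_l(S) \leq \norm{S_l}_2 \leq \norm{S_l}_F$ by the Eckart--Young characterization. The Frobenius norm then splits into a product of two geometric series,
\[
\norm{S_l}_F^2 \leq C^2 \Big(\sum_{i\geq 1} e^{-2\alpha i}\Big)\Big(\sum_{j\geq l} e^{-2\alpha j}\Big) = C^2\,\frac{e^{-2\alpha}}{1-e^{-2\alpha}}\cdot\frac{e^{-2\alpha l}}{1-e^{-2\alpha}},
\]
and taking square roots gives $\sigma_l(S) \leq \frac{C}{1-e^{-2\alpha}}\,e^{-\alpha(l+1)}$, which is precisely $\gamma\,e^{-\alpha(l+1)}$ after substituting the value of $C$.

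I do not expect a genuine obstacle here: the entire content is the cancellation in the second paragraph, and once it is observed the rest is elementary summation. The only points requiring mild care are the indexing induced by $\Pi_n$ (so that the $\beta$-exponents really add up to the constant $n+1$, making the $h$-sum a trivial factor $n$) and the observation that the column decay of rate $\alpha$ in $S$, combined with the tail of the row geometric series, is what promotes the expected $e^{-\alpha l}$ into the slightly sharper $e^{-\alpha(l+1)}$ claimed in the statement.
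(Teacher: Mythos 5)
Your proof is correct and takes essentially the same route as the paper's: reduce to $S = R_U \Pi_n R_V^*$ via the unitary invariance of singular values, observe that the column reversal makes the two $\beta$-decays collapse into the $h$-independent constant $e^{-\beta(n+1)}$ so that $|S_{ij}| \leq nc^2 e^{-\beta(n+1)}e^{-\alpha(i+j)}$, and then bound $\sigma_l(S)$ by the Frobenius norm of the matrix obtained by zeroing the first $l-1$ columns. The only difference is presentational: the paper stops after the entrywise bound and invokes ``the same procedure as in Theorem~\ref{thm:kryl-horn-decay}'', whereas you carry out that geometric-series computation explicitly, and your constants match the statement exactly.
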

   
   \begin{proof}
   We can write $X = U\Pi_n V^* = 
   Q_U R_U \Pi_n R_V^* Q_V^*$, so its
   singular values coincide with the ones
   of $S = R_U \Pi_{n,m} R_V^*$. 
   The element in 
   position $(i,j)$ of $S$ is obtained as 
   the a sum \[
   S_{ij} = \sum_{l = 1}^n R_{U,il} \cdot R_{V,j(n-l)}, \qquad 
   | R_{U,il} \cdot R_{V,j(n+1-l)} | \leq 
   c^2 e^{-\alpha(i+j) - \beta (n+1)}
   \] according to our
   hypotheses. Since the bound on the 
   elements in the above summation is independent of $n$ 
   we can write
   $
   |S_{ij}| \leq c^2 n e^{-\beta (n+1)} e^{-\alpha(i+j)}
   $.
   The thesis can then be obtained by following
   the same procedure as in Theorem~\ref{thm:kryl-horn-decay}. 
   \end{proof}  
   \begin{remark}\label{negligible}
   Observe that the larger $n$ the closer the quantity $ne^{-\beta n}$ is to $0$. Therefore for sufficiently big $n$ the resulting matrix is  negligible.
   \end{remark}
      
    \subsection{Decay in the  off-diagonal singular values of $f(A)$}    \label{sec:singdecay}
    We start with a few technical results that will make some proofs  smoother.  
    \begin{lemma}\label{dyads}
    Let  $A^+=\sum_{j=0}^{+\infty} A_j$  with $A_j\in\mathbb{R}^{m\times n}$ matrices of rank $k$ and suppose that $\left\| A_j\right\|_2\leq \gamma e^{-\alpha |j|}$. Then 
    \[
     \sigma_l(A^+)\leq \frac{\gamma}{1-e^{-\alpha}}\cdot e^{-\alpha  \frac{l-k}{k}}.
    \]
    \end{lemma}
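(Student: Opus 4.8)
The plan is to bound each singular value by the approximation error of a carefully truncated partial sum, exploiting the variational (Eckart--Young) characterization $\sigma_{r+1}(M)\leq\norm{M-B}_2$, valid for every matrix $B$ of rank at most $r$ --- the same principle already used, for instance, in the proof of Theorem~\ref{thm:kryl-horn-decay}. First I would observe that the series defining $A^+$ converges in the spectral norm, since $\sum_{j\geq 0}\norm{A_j}_2\leq\gamma\sum_{j\geq 0}e^{-\alpha j}=\gamma/(1-e^{-\alpha})<\infty$; hence $A^+$ is a well-defined matrix and its singular values are meaningful.

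For a fixed integer $J\geq 0$ I would take $B:=\sum_{j=0}^{J-1}A_j$ as the low-rank approximant. Because each $A_j$ has rank $k$, the matrix $B$ has rank at most $kJ$, and by the triangle inequality together with the geometric decay of the $\norm{A_j}_2$ the residual is controlled by $\norm{A^+-B}_2=\norm{\sum_{j\geq J}A_j}_2\leq\sum_{j\geq J}\gamma e^{-\alpha j}=\frac{\gamma}{1-e^{-\alpha}}e^{-\alpha J}$. Plugging $B$ into the variational inequality yields $\sigma_{kJ+1}(A^+)\leq\frac{\gamma}{1-e^{-\alpha}}e^{-\alpha J}$ for every $J\geq 0$.

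It then remains to convert this family of bounds, indexed by the truncation parameter $J$, into a bound on $\sigma_l$ for an arbitrary index $l$. Since the singular values are non-increasing, for any $l$ I would select the largest admissible truncation, namely $J=\lfloor(l-1)/k\rfloor$, which satisfies $kJ+1\leq l$ and therefore $\sigma_l(A^+)\leq\sigma_{kJ+1}(A^+)$. The only genuinely fiddly point --- and the step I expect to require the most care --- is verifying that this choice of $J$ produces exactly the claimed exponent: writing $l-1=qk+r$ with $0\leq r<k$ gives $J=q$, and $(l-k)/k=q+(r+1-k)/k\leq q=J$, so that $e^{-\alpha J}\leq e^{-\alpha(l-k)/k}$. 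Combining this with the displayed bound gives $\sigma_l(A^+)\leq\frac{\gamma}{1-e^{-\alpha}}e^{-\alpha(l-k)/k}$, as desired. No deeper difficulty is involved: the whole argument rests on a single approximation inequality and on the geometric summation, with the index bookkeeping being the only source of potential off-by-one errors.
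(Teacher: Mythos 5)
Your proof is correct and follows essentially the same route as the paper: approximate $A^+$ by a truncated partial sum, bound the tail in spectral norm by the geometric series, and invoke the Eckart--Young characterization of singular values. The only difference is cosmetic bookkeeping --- your truncation index $J=\lfloor (l-1)/k\rfloor$ equals the paper's $\lceil (l-k)/k\rceil$, so the two arguments coincide.
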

    \begin{proof}
    Note that $\sum\limits_{j< \lceil \frac{l-k}{k}\rceil}A_j$ is at most a rank-$(l-1)$  approximation of $A$. This implies that
    \begin{align*}
    \sigma_l(A)\leq \left\| A-\sum_{j< \lceil \frac{l-k}{k}\rceil}A_j\right\|_2&=\left\| \sum_{j\geq \lceil \frac{l-k}{k}\rceil}A_j \right\|_2\leq \sum_{j\geq \lceil \frac{l-k}{k}\rceil}\gamma e^{-\alpha j}=\\
    &=\gamma e^{-\alpha\lceil \frac{l-k}{k}\rceil}\sum_{j\geq 0}e^{-\alpha j}=\frac{\gamma}{1-e^{-\alpha}}\cdot e^{-\alpha \lceil \frac{l-k}{k}\rceil}.
    \end{align*}
    \end{proof} 
    \begin{lemma} \label{lem:sumdecay}
       Let $A =  \sum_{i = 1}^k A_i \in \mathbb{C}^{n \times n}$ 
       where
       $
         \sigma_j(A_i) \leq \gamma e^{- \alpha j}$, for $j = 1, \ldots, n$.
       Then 
       $
         \sigma_j(A) \leq \tilde \gamma e^{-\alpha \frac{j - k}{k}  }, \quad 
         \tilde \gamma = \frac{k\gamma}{1 - e^{-\alpha}}
       $.
     \end{lemma}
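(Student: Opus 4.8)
The plan is to reduce the statement to the preceding Lemma~\ref{dyads}, which already controls the singular values of a sum of low-rank matrices whose spectral norms decay exponentially. The idea is to rewrite the finite sum $A=\sum_{i=1}^k A_i$ as a \emph{single} exponentially decaying sum of matrices of rank at most $k$, by regrouping the dyads of the individual singular value decompositions according to their singular-value index rather than according to the summand they come from.

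Concretely, I would first expand each summand in its SVD, $A_i=\sum_{l\geq 1}\sigma_l(A_i)\,u_l^{(i)}(v_l^{(i)})^*$, and then collect, for each fixed index $l$, the $l$-th contribution across all $k$ summands:
\[
  B_l:=\sum_{i=1}^k \sigma_l(A_i)\,u_l^{(i)}(v_l^{(i)})^*,\qquad l\geq 1,
\]
so that $A=\sum_{l\geq 1}B_l$ (a finite sum, since the matrices are $n\times n$). Each $B_l$ is a sum of $k$ rank-one terms, hence has rank at most $k$, and the triangle inequality combined with the hypothesis $\sigma_l(A_i)\leq\gamma e^{-\alpha l}$ gives $\lVert B_l\rVert_2\leq\sum_{i=1}^k\sigma_l(A_i)\leq k\gamma e^{-\alpha l}$.

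At this point the sequence $\{B_l\}_{l\geq 1}$ meets the hypotheses of Lemma~\ref{dyads}: after the harmless reindexing $l\mapsto l-1$ so that the sum starts at $0$, the general term $C_j:=B_{j+1}$ has rank at most $k$ and satisfies $\lVert C_j\rVert_2\leq k\gamma e^{-\alpha}\,e^{-\alpha j}$. Applying Lemma~\ref{dyads} with the effective constant $\gamma'=k\gamma e^{-\alpha}$ then yields
\[
  \sigma_l(A)\leq \frac{k\gamma e^{-\alpha}}{1-e^{-\alpha}}\,e^{-\alpha\frac{l-k}{k}}\leq \frac{k\gamma}{1-e^{-\alpha}}\,e^{-\alpha\frac{l-k}{k}},
\]
which is exactly the claimed bound with $\tilde\gamma=\frac{k\gamma}{1-e^{-\alpha}}$; the last inequality merely discards the favorable factor $e^{-\alpha}<1$.

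The only genuinely delicate step is the bookkeeping: one must verify that the reindexing of the $B_l$ lines up with the $j=0,1,\dots$ convention of Lemma~\ref{dyads}, that rank \emph{at most} $k$ (rather than exactly $k$) is admissible there (it is, since the proof only uses the rank to bound the rank of partial sums, and zero dyads can be padded in), and that the resulting constant does not exceed the stated $\tilde\gamma$ (in fact it is slightly smaller). Everything else---subadditivity of rank and the triangle inequality for the spectral norm---is routine. A self-contained alternative avoiding Lemma~\ref{dyads} would truncate each SVD to its top $r$ terms, so that the sum of the truncations has rank at most $kr$ and approximates $A$ to within $\sum_{i=1}^k\sigma_{r+1}(A_i)\leq k\gamma e^{-\alpha(r+1)}$; choosing $r$ as a function of $l$ reproduces the same exponential envelope, but the reduction to Lemma~\ref{dyads} is cleaner and produces the geometric constant $\frac{1}{1-e^{-\alpha}}$ automatically.
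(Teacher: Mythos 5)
Your proposal is correct and is essentially identical to the paper's own proof: the paper likewise expands each $A_i$ in its SVD, regroups the dyads by singular-value index into matrices $\tilde A_j=\sum_{i=1}^k\sigma_j(A_i)u_{i,j}v_{i,j}^*$ of rank at most $k$ with $\lVert\tilde A_j\rVert_2\leq k\gamma e^{-\alpha j}$, and then invokes Lemma~\ref{dyads}. Your only addition is the explicit reindexing to match the $j\geq 0$ convention of Lemma~\ref{dyads}, a bookkeeping point the paper glosses over, which in fact yields a slightly smaller constant than the stated $\tilde\gamma$.
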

     
     \begin{proof}
       Relying on the SVD, we write $A_i=\sum_{j=1}^{\infty}\sigma_j(A_i)u_{i,j}v_{i,j}^*$ where $u_{i,j}$ and $v_{i,j}$ are the singular vectors of $A_i$ and where, for convenience, we have expanded the sum to an infinite number of terms by setting $\sigma_j(A_i)=0$ for $j>n$.
       This allows us to write
       \[
         A =  \sum_{i = 1}^k A_i = 
         \sum_{j = 1}^{\infty} \left(
            \sum_{i = 1}^k \sigma_j(A_i) u_{i,j} v_{i,j}^*
         \right) = \sum_{j = 1}^\infty \tilde A_j. 
       \]
       Observe that $\tilde A_j$ have rank
       $k$ and $\lVert A_j \rVert \leq k\gamma e^{-\alpha j}$. Applying Lemma~\ref{dyads} completes the proof. 
     \end{proof}
    \begin{lemma}\label{lem:rank-corr}
    Let $A,B\in\mathbb C^{m\times m}$  and suppose that $B$ has rank $k$. Then
    \[
    \sigma_{j+k}(A+B)\leq \sigma_j(A).
    \]
    \end{lemma}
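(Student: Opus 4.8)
The plan is to use the Eckart--Young characterization of singular values as best low-rank approximation errors in the spectral norm, combined with the subadditivity of matrix rank. Recall that for any $M\in\mathbb C^{m\times m}$ one has
\[
\sigma_j(M)=\min_{\rk(R)\leq j-1}\norm{M-R}_2,
\]
where the minimum is attained by truncating the SVD of $M$. This reformulation is convenient because it converts a statement about singular value indices into a statement about the ranks of competing approximants, which is precisely where the hypothesis on $B$ will enter.

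First I would fix $j$ and let $R_j$ be an optimal rank-$(j-1)$ approximation of $A$, so that $\norm{A-R_j}_2=\sigma_j(A)$ with $\rk(R_j)\leq j-1$. Then I would use $R_j$ to build an admissible competitor in the minimization defining $\sigma_{j+k}(A+B)$. The natural choice is $R_j+B$: by rank subadditivity $\rk(R_j+B)\leq\rk(R_j)+\rk(B)\leq(j-1)+k=j+k-1$, so this matrix is allowed in the best rank-$(j+k-1)$ approximation problem for $A+B$. Substituting it yields
\[
\sigma_{j+k}(A+B)=\min_{\rk(R)\leq j+k-1}\norm{(A+B)-R}_2\leq\norm{(A+B)-(R_j+B)}_2=\norm{A-R_j}_2=\sigma_j(A),
\]
which is exactly the claimed inequality. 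The crucial observation is that adding $B$ to both the target $A+B$ and the approximant $R_j+B$ cancels $B$ from the residual entirely.

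There is no genuine analytic difficulty here; the only care required is in the bookkeeping. I would verify the index convention in the Eckart--Young identity (that rank $\leq j-1$ corresponds to $\sigma_j$, and rank $\leq j+k-1$ to $\sigma_{j+k}$) so that the final indices align, and invoke the rank bound $\rk(X+Y)\leq\rk(X)+\rk(Y)$ explicitly. An alternative derivation through the Courant--Fischer min-max principle is available, but it forces one to track intersections of subspaces of prescribed dimension, which is more error-prone; hence I expect the best-approximation argument above to be the cleanest route.
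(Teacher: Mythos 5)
Your proof is correct and follows essentially the same route as the paper's: invoke Eckart--Young--Mirsky to get an optimal low-rank approximant $R_j$ of $A$, observe that $R_j+B$ is an admissible competitor of rank at most $j+k-1$ for $A+B$, and conclude since $B$ cancels in the residual. If anything, your bookkeeping (rank $\leq j-1$ paired with $\sigma_j$) is slightly more careful than the paper's wording, which loosely speaks of an approximant ``of rank $j$'' achieving error $\sigma_j(A)$.
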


    \begin{proof}
      For the Eckart-Young-Mirsky theorem $\forall j=1,\dots,m$
      $\exists \widetilde A$ of rank $j$ such that
      $\norm{A-\widetilde A}_2=\sigma_j(A)$. Therefore, since
      $\widetilde A+ B$ has rank less than or equal to $j+k$ we have
    \[
     \sigma_{j+k}(A+B)\leq \norm{(A+B)-(\widetilde A+B)}_2=\sigma_j(A).
    \]
    \end{proof}
   We are ready to study singular values of the matrix resulting from applying a function to a matrix. We prefer to begin by
   stating a simpler result which holds for matrices with spectrum contained in $B(0,1)$ and function holomorphic on a larger disk. In the following corollaries it is shown how to adapt this result to more general settings.
   \begin{theorem}\label{thm:matrix-func-decay}
   Let $A\in\mathbb C^{m\times m}$ be quasiseparable of rank $k$ and such that $A$ and all its trailing submatrices are enclosed in $(\rho,R_A,\mathcal V_A)$ and diagonalizable. Consider $f(z)$ holomorphic on $B(0,R)$ with $R>1$. Then, we can bound the singular values of a generic off-diagonal block $\tilde C$ in $f(A)$ with
   \[
   \sigma_l(\tilde C)\leq \gamma e^{-\frac{(\alpha+\alpha')l}{k}},\quad \alpha=\log\left(\frac{R_A}{\rho}\right),\quad \alpha'=\log( R),
   \]
   where $\gamma:= \max\limits_{|z|=R} |f(z)|\cdot  \kappa_{max}^2\cdot \norm {A}_2\cdot \Lambda(\rho,R_A,\mathcal V_A,R)\cdot \frac{k\cdot  \rho}{R R_A-\rho}$ and  $\kappa_{max}$ is the maximum among the spectral condition numbers of the trailing submatrices of $A$.
   \end{theorem}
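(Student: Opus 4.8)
The plan is to reduce the statement to the rank-one situation governed by Lemma~\ref{lem:tech} and Theorem~\ref{thm:kryl-horn-decay}, and then to reassemble the $k$ rank-one contributions with Lemma~\ref{lem:sumdecay}. First I would use quasiseparability: the off-diagonal block $\bar C$ of $A$ that produces $\tilde C$ has rank at most $k$, so I may write $\bar C=\sum_{i=1}^k u_iv_i^t$. Choosing this decomposition from a singular value decomposition of $\bar C$ and splitting each singular value symmetrically, I can guarantee $\norm{u_i}_2\norm{v_i}_2=\sigma_i(\bar C)\leq\norm{\bar C}_2\leq\norm{A}_2$, which is the origin of the factor $\norm{A}_2$ in $\gamma$. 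Since the expression for $\tilde C$ in Lemma~\ref{lem:tech} is linear in the pair $(u,v)$, linearity yields $\tilde C=\sum_{i=1}^k\tilde C_i$, where $\tilde C_i$ is the block associated with the rank-one choice $\bar C=u_iv_i^t$.

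Next I would bound each $\tilde C_i$ individually. Reading off Lemma~\ref{lem:tech} and its reformulation in Remark~\ref{rem:outer}, $\tilde C_i$ is, up to the harmless right restriction by $[I\ 0]^t$, a Krylov/Horner outer product with Krylov data $A_1=\bar D$, $b_1=u_i$ and Horner data $A_2=A^t$, $b_2=\tilde v_i$ where $\tilde v_i=[I\ 0]^tv_i$. Both $\bar D$ (a trailing submatrix) and $A^t$ are diagonalizable and enclosed by $(\rho,R_A,\mathcal V_A)$ under the hypotheses, because transposition preserves the spectrum and $\kappa_s$; hence $\kappa_s(A_1),\kappa_s(A_2)\leq\kappa_{max}$. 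The Taylor coefficients of $f(z)=\sum_n a_nz^n$ obey, by the Cauchy estimates on $B(0,R)$, the inequality $|a_n|\leq\max_{|z|=R}|f(z)|\,R^{-n}$, which is precisely the hypothesis $|a_j|\leq\hat\gamma R^{-j}$ of Theorem~\ref{thm:kryl-horn-decay} with $\hat\gamma=\max_{|z|=R}|f(z)|$. Applying that theorem to each finite truncation of $\tilde C_i$ then bounds its $l$-th singular value by $\gamma_X e^{-(\alpha+\alpha')(l+1)}$, with a constant $\gamma_X$ independent of the truncation length; restricting columns by $[I\ 0]^t$ can only decrease singular values, so the bound survives.

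It remains to pass to the full series and to sum the $k$ pieces. Theorem~\ref{thm:sdecay2} and Remark~\ref{negligible} show that the discarded tail of \eqref{kryl_negl} is negligible, so the truncations converge to $\tilde C_i$ in operator norm; since the bound is uniform in the truncation length and singular values are continuous in that norm, it passes to the limit and bounds $\sigma_l(\tilde C_i)$. Writing $e^{-(\alpha+\alpha')(l+1)}=e^{-(\alpha+\alpha')}e^{-(\alpha+\alpha')l}$ with $e^{-(\alpha+\alpha')}=\rho/(RR_A)$ and applying Lemma~\ref{lem:sumdecay} at rate $\alpha+\alpha'$ to $\tilde C=\sum_{i=1}^k\tilde C_i$, the prefactor $k/(1-e^{-(\alpha+\alpha')})=kRR_A/(RR_A-\rho)$ combines with the per-term $\rho/(RR_A)$ to give exactly $k\rho/(RR_A-\rho)$, while $\hat\gamma$, $\kappa_{max}^2$, $\norm{A}_2$ and $\Lambda(\rho,R_A,\mathcal V_A,R)$ assemble into the stated constant $\gamma$; the decay exponent $e^{-(\alpha+\alpha')l/k}$ follows after absorbing the integer shift produced by Lemma~\ref{lem:sumdecay} into $\gamma$.

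The step I expect to be most delicate is the bookkeeping of constants along the chain Theorem~\ref{thm:kryl-horn-decay} $\to$ tail estimate $\to$ Lemma~\ref{lem:sumdecay}, and in particular tracking the two shifts $e^{-(\alpha+\alpha')}$ (one from the $(l+1)$ exponent, one from the $\tfrac{l-k}{k}$ exponent of Lemma~\ref{lem:sumdecay}), since it is exactly these shifts that turn the crude $kRR_A/(RR_A-\rho)$ into the cleaner $k\rho/(RR_A-\rho)$ of the final $\gamma$. A secondary point requiring care is verifying that the rank-one decomposition chosen to minimize $\norm{u_i}_2\norm{v_i}_2$ remains compatible with the enclosure and diagonalizability of $\bar D$ and $A^t$, so that Theorem~\ref{thm:kryl-horn-decay} genuinely applies to every summand $\tilde C_i$.
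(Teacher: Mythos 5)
Your proposal follows essentially the same route as the paper's own proof: decompose $\bar C$ via its SVD into rank-one terms, represent each resulting block as the Krylov/Horner outer product of Lemma~\ref{lem:tech} and Remark~\ref{rem:outer}, bound the Taylor coefficients of $f$ by Cauchy estimates so that Theorem~\ref{thm:kryl-horn-decay} applies with a constant independent of the truncation order, pass to the limit in $s$, and reassemble the $k$ pieces with Lemma~\ref{lem:sumdecay}. The differences are only in presentation: you make the limiting argument and the constant bookkeeping (including the origin of $\norm{A}_2$ and of the factor $\frac{k\rho}{RR_A-\rho}$) more explicit than the paper, which states them rather tersely.
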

   \begin{proof}
     Consider the partitioning
     $A= \left[\begin{smallmatrix} \bar A&\bar B\\ \bar C&\bar
         D \end{smallmatrix}\right]$ and for simplicity the case
     $k=1$, $\bar C=uv^t$. The general case is obtained by linearity
     summing $k$ objects of this kind coming from the SVD of $\bar C$
     and applying Lemma~\ref{lem:sumdecay}.  We rewrite the
     Dunford-Cauchy formula for $f(A)$
    \[
    f(A)=\frac{1}{2\pi i}\int_{S^1}(zI-A)^{-1}f(z)dz.
    \]
    Let $f(z)=\sum_{n\geq 0}a_nz^n$ be the Taylor expansion of $f(z)$ in $B(0,R)$. The corresponding off-diagonal block $\tilde C$ in $f(A)$ can be written as the outer product in Remark~\ref{rem:outer}
    \begin{equation}\label{representation}
    \left[
    u\ \vline\ \bar D\cdot u\ \vline\ \dots\ \vline\ \bar D^{s-1}\cdot u
    \right]\cdot
    \left[
    \sum_{n=0}^{s-1}a_{n+1}( A^t)^n\bar v\ \vline\ \dots\ \vline\ (a_s  A^t+a_{s-1}I)\bar v\ \vline\  a_s \bar v
    \right]^t[ I \ 0 ]^t+g_s(A),
    \end{equation}
     where $\bar v=[I\  0]^tv$ and $g_s(A)$ is the remainder of the truncated Taylor series at order $s$. Since $f(z)$ is holomorphic in $B(0,R)$ the coefficients of $f(z)$ verify \cite[Theorem 4.4c]{henrici}
     \[
     |a_j|\leq \max_{|z|=R}|f(z)| \cdot R^{-j}.
     \]
     Applying Theorem~\ref{thm:kryl-horn-decay} we get that $\forall r\in(\rho,R_{A})$  
      \[
      \sigma_l(\tilde C-g_s(A))\leq\gamma e^{-(\alpha+\alpha') l},
      \]
     with $\alpha,\alpha',\delta,\kappa_{max}$ as in the thesis and $\gamma= \max\limits_{|z|=R} |f(z)|\cdot  \kappa_{max}^2\norm {A}_2\cdot \Lambda(\rho,R_A,\mathcal V_A,R)$.
     Observing that this bound is independent on $s$ and $\lim_{s\to\infty}g_s(A)=0$ we get the thesis.
   \end{proof}
   \begin{corollary}\label{cor:decay-func}
   Let $A\in\mathbb C^{m\times m}$ be a $k$-quasiseparable matrix, $z_0\in\mathbb C$ and $R'\in\mathbb R^+$ such that $R'^{-1}(A-z_0I)$ is enclosed in $(\rho,R_A,\mathcal V_A)$. Then, for any holomorphic function $f(z)$ in $B(z_0,R)$ with $R>R'$, any off-diagonal block $\tilde C$ in $f(A)$ has singular values bounded by
   \[
   \sigma_l(\tilde C)\leq  \gamma e^{-\frac{(\alpha+\alpha')l}{k}},\quad \alpha=\log\left(\frac{R_A}{\rho}\right),\quad \alpha'=\log\left( \frac{R}{R'}\right),
   \] 
   where $\gamma:= \max\limits_{|z-z_0|=R} |f(z)|\cdot  \kappa_{max}^2\cdot\norm {A-z_0I}_2\cdot \Lambda(\rho,R_A,\mathcal V_A,R)\cdot \frac{k\cdot  \rho}{R R_A-\rho R'}$ and $\kappa_{max}$ is the maximum among the spectral condition numbers of the trailing submatrices of $R'^{-1}(A-z_0I)$.
   \end{corollary}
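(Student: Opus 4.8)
The plan is to reduce the statement to Theorem~\ref{thm:matrix-func-decay} through the affine change of variable already used in the rescaling discussion following the Dunford--Cauchy formula (and in Remark~\ref{rem:spectralradiusD}). Set $\tilde A := R'^{-1}(A - z_0 I)$ and $g(w) := f(R'w + z_0)$. First I would record the three structural facts that make the reduction go through: (i) $\tilde A$ is still $k$-quasiseparable, since subtracting a scalar multiple of the identity and multiplying by a nonzero scalar turns every strictly off-diagonal block into a scalar multiple of the corresponding block of $A$, hence does not raise its rank; (ii) $g$ is holomorphic on $B(0,\tilde R)$ with $\tilde R := R/R' > 1$, because $|w| < \tilde R$ forces $R'w + z_0 \in B(z_0,R)$; and (iii) $f(A) = g(\tilde A)$ by the composition rule for matrix functions under an affine substitution. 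The last fact is immediate from Definition~\ref{def:matrix-func1}: writing $A = R'\tilde A + z_0 I$, if $\tilde\lambda = R'^{-1}(\lambda - z_0)$ is an eigenvalue of $\tilde A$ then $g^{(h)}(\tilde\lambda) = (R')^h f^{(h)}(\lambda)$, and these powers of $R'$ match exactly the scaling of the nilpotent part of each Jordan block, so the two Jordan evaluations coincide. By hypothesis $\tilde A$ is enclosed in $(\rho,R_A,\mathcal V_A)$ with $\rho\in(0,1)$, so its spectrum lies in $B(0,1)$, and I would assume---as in Theorem~\ref{thm:matrix-func-decay}, and as is implicit in the definition of $\kappa_{max}$---that its trailing submatrices are likewise enclosed and diagonalizable.

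With these facts in hand the off-diagonal block $\tilde C$ of $f(A)$ is literally the corresponding off-diagonal block of $g(\tilde A)$ (the argument is rescaled, but the output matrix is the same), so I would simply apply Theorem~\ref{thm:matrix-func-decay} to the pair $(\tilde A, g)$ on the disc $B(0,\tilde R)$. This yields at once a bound of the form $\sigma_l(\tilde C) \le \tilde\gamma\, e^{-(\alpha + \tilde\alpha')l/k}$ with $\alpha = \log(R_A/\rho)$ and $\tilde\alpha' = \log\tilde R = \log(R/R')$, which is exactly the decay exponent claimed in the corollary.

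The remaining work is to translate the constant $\tilde\gamma$ produced by the theorem back into quantities attached to $f$, $A$, $z_0$ and $R'$, which I would do one factor at a time. Since $w \mapsto R'w + z_0$ sends the circle $|w| = \tilde R$ onto $|z - z_0| = R$, we have $\max_{|w|=\tilde R}|g(w)| = \max_{|z-z_0|=R}|f(z)|$; next $\norm{\tilde A}_2 = R'^{-1}\norm{A - z_0 I}_2$; the condition numbers $\kappa_{max}$ are unchanged, since they already refer to the trailing submatrices of $\tilde A$ itself; and the theorem's tail factor becomes $\frac{k\rho}{\tilde R R_A - \rho} = \frac{k\rho R'}{R R_A - \rho R'}$. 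The key bookkeeping point is the cancellation: the factor $R'^{-1}$ from $\norm{\tilde A}_2$ and the factor $R'$ in the numerator of the tail term multiply to $1$, leaving precisely $\norm{A - z_0 I}_2 \cdot \frac{k\rho}{R R_A - \rho R'}$ as in the statement.

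The main difficulty is not conceptual but lies in this constant bookkeeping, together with checking the fourth argument of $\Lambda$: applying the theorem on $B(0,\tilde R)$ delivers $\Lambda(\rho,R_A,\mathcal V_A,\tilde R) = \Lambda(\rho,R_A,\mathcal V_A,R/R')$, so one should verify whether the $R$ appearing in the corollary's $\gamma$ is meant to be this rescaled radius $R/R'$. Apart from that, the only point requiring genuine care is the composition identity $f(A) = g(\tilde A)$; everything else is a verbatim invocation of Theorem~\ref{thm:matrix-func-decay} followed by elementary algebra on the constants.
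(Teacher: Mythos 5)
Your proposal is correct and takes essentially the same approach as the paper: the paper's proof likewise defines $g(z)=f(R'z+z_0)$, observes $f(A)=g\bigl(R'^{-1}(A-z_0I)\bigr)$, and concludes by applying Theorem~\ref{thm:matrix-func-decay}, leaving all the constant bookkeeping you carried out implicit. Your side remark about the fourth argument of $\Lambda$ is a fair one --- a literal application of the theorem on $B(0,R/R')$ yields $\Lambda(\rho,R_A,\mathcal V_A,R/R')$, a detail the paper's two-line proof glosses over.
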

   \begin{proof}
   Define $g(z)=f(R'z+z_0)$ which is holomorphic on $B(0,\frac{R}{R'})$. Observing that $f(A)=g(R'^{-1}(A-z_0I))$ we can conclude by applying Theorem~\ref{thm:matrix-func-decay}.
   \end{proof}
    \begin{remark}
    If we can find $z_0\in\mathbb C$ such that $\norm{A-z_0I}_2<R$ then it is always possible to find $(\rho,R_A,\mathcal V_A)$ with $\rho\in(0,1)$ which satisfies the hypothesis of the previous corollary. A worst case estimate for $\frac{\rho}{R_A}$ is $\frac{\norm{A-z_0I}_2}{R}$ since this  is the radius of a circle containing the spectrum of the rescaled matrix and --- given that the Riemann map for a ball centered in $0$ is the identity --- $R_A=1$.
    \end{remark}
    \begin{example}[Real spectrum]
    We here want to estimate the quantity $\frac{R_A}{\rho}$ in the case of a real spectrum for the matrix $A$. Suppose that --- possibly after a scaling --- the latter is contained in the symmetric interval $[-a,a]$ with $a\in(0,1)$. The logarithmic capacity of this set is $\frac{a}{2}$ and the inverse of the associated Riemann map is $\psi(z)=z+\frac{a^2}{4}$. This follows by observing that the function $z+z^{-1}$ maps the circle of radius $1$ into $[-2,2]$, so then it is sufficient to compose the latter with two homothetic transformations to get $\psi(z)$. Moreover, observe that --- given $r\geq\frac{a}{2}$ --- $\psi$ maps the circle of radius $r$ into an ellipse of foci $[-a,a]$. Therefore, in order to get $R_A$ it is sufficient to compute for which $r$ we have $\psi(r)=1$. This corresponds to finding the solution of $r+\frac{a^2}{4r}=1$ which is greater than $\frac{a}{2}$. This yields
    \[
    R_A=\frac{1+\sqrt{1-a^2}}{2}\quad \Rightarrow\quad \frac{R_A}{\rho}=\frac{1+\sqrt{1-a^2}}{a}.
    \]
    
    \end{example}

   \section{Functions with singularities}
   \label{sec:poles}
   If some singularities of $f$ lie inside $B(z_0,R)$ then $f(A)\neq \int_{\partial B(z_0,R)}f(z)(zI-A)^{-1}dz$. However, since the coefficients of the Laurent expansion of 
   $f$ with negative degrees in \eqref{cbar} do not affect the result, the statement of Theorem~\ref{thm:matrix-func-decay} holds for the matrix $\int_{\partial B(z_0,R)}f(z)(zI-A)^{-1}dz$. In this section we prove that --- under mild conditions --- the difference of the above two terms still has a quasiseparable structure. This  numerically preserves the  quasiseparability of $f(A)$.
   \subsection{An extension of the Dunford-Cauchy integral formula}
   
   The main tool used to overcome difficulties in case
   of removable singularities will be the following
   result, which is an extension of the integral 
   formula used in Definition~\ref{def:matrix-func1}. 
   
   \begin{theorem}\label{thm:ext-cauchy}
   Let $f(z)$ be a meromorphic function with a discrete set of poles $\mathcal P$ and $A\in\mathbb C^{m\times m}$ with spectrum $\mathcal S$ such that $\mathcal S\cap\mathcal P=\emptyset$. Moreover, consider $\Gamma$ simple closed curve in the complex plane which encloses $\mathcal S$ and $T:=\{z_1,\dots,z_t\}\subseteq \mathcal P$ subset of poles with orders $d_{1},\dots,d_{t}$ respectively. Then
   \[
   \frac{1}{2\pi i}\int_{\Gamma}(zI-A)^{-1}f(z)dz=f(A)+\sum_{j=1}^tR_j(z_jI-A),
   \]
   where $R_j$ is the rational function
   \[
   R_j(z):=\sum_{l=1}^{d_j}(-1)^{l+1}\frac{f_j^{(d_j-l)}(z_j)}{(d_j-l)!}z^{-l}
   \]
   and  $f_j(z)=(z-z_j)^{d_j}f(z)$, extended to the limit in $z_j$.    
   In particular if the poles in $T$ are simple then
   \[
                   \frac{1}{2\pi i}\int_{\Gamma}(zI-A)^{-1}f(z)dz=f(A)+\sum_{j=1}^tf_j(z_j)\cdot(z_jI-A)^{-1}=f(A)+\sum_{j=1}^tf_j(z_j)\mathfrak R(z_j).
   \]                
   \end{theorem}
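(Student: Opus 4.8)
The plan is to apply the residue theorem componentwise to the matrix-valued integrand $\mathfrak R(z)f(z)=(zI-A)^{-1}f(z)$ along $\Gamma$. Inside $\Gamma$ this integrand has singularities of exactly two types: the eigenvalues of $A$, where the resolvent $\mathfrak R(z)$ is singular but $f$ is holomorphic (since $\mathcal S\cap\mathcal P=\emptyset$), and the poles $z_1,\dots,z_t$ of $f$, where $\mathfrak R(z)$ is holomorphic (since $z_j\notin\mathcal S$). Reading $T$ as precisely the set of poles of $f$ that $\Gamma$ encloses, so that $\mathcal S\cup T$ accounts for all singularities of the integrand inside $\Gamma$, the residue theorem gives
\[
\frac{1}{2\pi i}\int_{\Gamma}\mathfrak R(z)f(z)\,dz=\sum_{\lambda\in\mathcal S}\Res_{z=\lambda}\bigl[\mathfrak R(z)f(z)\bigr]+\sum_{j=1}^{t}\Res_{z=z_j}\bigl[\mathfrak R(z)f(z)\bigr].
\]
I would then handle the two sums separately.

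For the first sum I would deform $\Gamma$ onto a contour $\Gamma_{\mathcal S}$ made of small disjoint circles around the eigenvalues, taken small enough that $f$ is holomorphic in the region they enclose (possible because $\mathcal P$ is discrete and disjoint from $\mathcal S$). Since $\Gamma$ and $\Gamma_{\mathcal S}$ differ only by loops enclosing no singularity of the resolvent, the eigenvalue residues sum to $\tfrac{1}{2\pi i}\int_{\Gamma_{\mathcal S}}\mathfrak R(z)f(z)\,dz$, which is exactly $f(A)$ by the Dunford--Cauchy definition: on $\Gamma_{\mathcal S}$ the function $f$ is holomorphic and the contour still encloses the whole spectrum.

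For the residue at $z_j$ I would write $f(z)=f_j(z)/(z-z_j)^{d_j}$ with $f_j$ holomorphic at $z_j$, so that $\mathfrak R(z)f_j(z)$ is holomorphic at $z_j$ and the pole has order $d_j$, giving
\[
\Res_{z=z_j}\bigl[\mathfrak R(z)f(z)\bigr]=\frac{1}{(d_j-1)!}\lim_{z\to z_j}\frac{d^{\,d_j-1}}{dz^{\,d_j-1}}\bigl[\mathfrak R(z)f_j(z)\bigr].
\]
Expanding by the Leibniz rule and using $\frac{d^k}{dz^k}\mathfrak R(z)=(-1)^k k!\,(zI-A)^{-(k+1)}$, the binomial factor simplifies ($\frac{1}{(d_j-1)!}\binom{d_j-1}{k}k!=\frac{1}{(d_j-1-k)!}$) and the residue becomes
\[
\sum_{k=0}^{d_j-1}\frac{(-1)^k}{(d_j-1-k)!}\,(z_jI-A)^{-(k+1)}f_j^{(d_j-1-k)}(z_j).
\]
The substitution $l=k+1$ reindexes this to $\sum_{l=1}^{d_j}(-1)^{l+1}\frac{f_j^{(d_j-l)}(z_j)}{(d_j-l)!}(z_jI-A)^{-l}$, which is exactly $R_j(z_jI-A)$. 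Summing over $j$ yields the general identity, and $d_j=1$ collapses to $f_j(z_j)(z_jI-A)^{-1}$, recovering the simple-pole formula.

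The routine but delicate part is this residue computation at $z_j$: tracking the Leibniz expansion, the resolvent derivative formula, and the reindexing so that the constants line up with $R_j$. The one genuinely structural point is the identification of the first sum with $f(A)$; here I must ensure that the extra poles $z_j$ lying inside $\Gamma$ do not perturb the residues at the eigenvalues, which is precisely what the deformation onto $\Gamma_{\mathcal S}$ — where $f$ is holomorphic — guarantees.
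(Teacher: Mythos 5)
Your proof is correct, but it follows a genuinely different route from the paper's. The paper conjugates $A$ into diagonal (respectively Jordan) form, reduces the matrix integral to scalar integrals of the form $\frac{1}{2\pi i}\int_\Gamma \frac{f(z)}{(z-\lambda_p)^{h+1}}dz$, applies the scalar residue theorem entrywise, and verifies the resulting derivative expansions through the two induction identities \eqref{ind1} and \eqref{ind2} proved in the appendix; the diagonalizable and non-diagonalizable cases are handled separately. You instead apply the residue theorem directly to the matrix-valued integrand $\mathfrak R(z)f(z)$: the eigenvalue residues are collected into $f(A)$ by shrinking the contour onto small circles around the spectrum where $f$ is holomorphic --- that is, the union-of-Jordan-curves form of the Dunford--Cauchy definition, which the paper itself invokes later when treating branched functions --- while the residue at each $z_j$ is computed in a single stroke via the Leibniz rule and the resolvent derivative formula $\frac{d^k}{dz^k}\mathfrak R(z)=(-1)^k k!\,(zI-A)^{-(k+1)}$; your reindexing $l=k+1$ lands exactly on $R_j(z_jI-A)$, with the correct sign and constants. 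What each approach buys: the paper's argument needs only scalar residue calculus, at the price of the case split on the Jordan structure and the appendix inductions (which are in fact the scalar shadows of your single Leibniz computation); yours is uniform in $A$ --- no diagonalizability assumption, no Jordan blocks --- and shorter, at the mild cost of invoking the componentwise residue theorem for matrix-valued functions and the multi-contour form of the definition of $f(A)$. You also made explicit a reading the paper leaves implicit: $T$ must be exactly the set of poles of $f$ enclosed by $\Gamma$, since any enclosed pole omitted from $T$ would contribute an unaccounted residue and break the identity.
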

   \begin{proof}
   We first prove the statement for $A$ diagonalizable. Assume that
   $V^{-1}AV=\hbox{diag}(\lambda_1,\ldots,\lambda_n)$, then
   \begin{equation}\label{residue}
   \frac{1}{2\pi i}\int_{\Gamma}(zI-A)^{-1}f(z)dz=V^{-1}\begin{bmatrix} \frac{1}{2\pi i}\int_{\Gamma}\frac{f(z)}{z-\lambda_1}\\
                    &\ddots\\
                    &&\frac{1}{2\pi i}\int_{\Gamma}\frac{f(z)}{z-\lambda_m}
                    \end{bmatrix}V.
   \end{equation}
   Applying the Residue theorem we arrive at
   \[
   \frac{1}{2\pi i} \int_{\Gamma}\frac{f(z)}{z-\lambda_p}=\res\left(\frac{f}{z-\lambda_p},\lambda_p\right)+\sum_{j=1}^t\res\left(\frac{f}{z-\lambda_p},z_j\right),\qquad p=1,\dots,m.
   \]
   Since $\lambda_p$ is a simple pole of $\frac{f}{z-\lambda_p}$ the first summand is equal to $f(\lambda_p)$. 
   
   On the other hand $z_j$ is a pole of order $d_j$ of $\frac{f}{z-\lambda_p}$, therefore its residue is
   \[
   \res\left(\frac{f}{z-\lambda_p},z_j\right)=
   \frac{1}{(d_j-1)!}\lim_{z\to z_j}\frac{\partial^{d_j-1}}{\partial z^{d_j-1}}\left((z-z_j)^{d_j}\frac{f}{z-\lambda_p}\right)=\frac{1}{(d_j-1)!}\frac{\partial^{d_j-1}}{\partial z^{d_j-1}}\left(\frac{f_j}{z-\lambda_p}\right)(z_j).
   \]
   One can prove by induction (see Appendix) that, given a sufficiently differentiable $f_j(z)$, it holds
   \begin{equation}\label{ind1}
   \frac{\partial^{d-1}}{\partial z^{d-1}}\left(\frac{f_j(z)}{z-\lambda_p} \right)=
   \sum_{l=1}^{d}(-1)^{l+1}\frac{(d-1)!}{(d-l)!}f_j^{(d-l)}(z)(z-\lambda_p)^{-l},\quad d\in\mathbb N.
   \end{equation}
   Setting $d=d_j$ in \eqref{ind1} we  derive
   \[
   \res\left(\frac{f}{z-\lambda_p},z_j\right)=R_j(z_j-\lambda_p).
   \]
   To conclude it is sufficient to  rewrite the diagonal matrix in \eqref{residue} as
   \[
   \begin{bmatrix} f(\lambda_1)\\
                 &\ddots\\
                 &&f(\lambda_m)
                 \end{bmatrix}+\sum\limits_{j=1}^t
   \begin{bmatrix}  R_j(z_j-\lambda_1)\\
                &\ddots\\
                && R_j(z_j-\lambda_m)
                \end{bmatrix}.             
   \]
   We now prove the thesis for
   \[
   A=\begin{bmatrix}
   \lambda&1\\
   &\ddots&\ddots\\
   &&\ddots&1\\
   &&&\lambda
   \end{bmatrix},
   \]
   because the general non diagonalizable case can be decomposed in sub-problems of that kind. We have that
   \[                 
   \frac{1}{2\pi i}\int_{\Gamma}(zI-A)^{-1}f(z)dz=\frac{1}{2\pi i}\begin{bmatrix} 
   \int_{\Gamma}\frac{f(z)}{z-\lambda}&\int_{\Gamma}\frac{f(z)}{(z-\lambda)^2}&\dots&\int_{\Gamma}\frac{f(z)}{(z-\lambda)^m}\\
             &\ddots&\ddots&\vdots\\
             &&\ddots&\int_{\Gamma}\frac{f(z)}{(z-\lambda)^2}\\
             &&&\int_{\Gamma}\frac{f(z)}{z-\lambda}
             \end{bmatrix}. 
   \]
   In order to reapply the previous argument  is sufficient to prove that
   \begin{itemize}
   \item[(i)] $\res(\frac{f}{(z-\lambda)^{h+1}},\lambda)=\frac{f_j^{(h)}(\lambda)}{h!}$ $h=1,\dots,m-1$,
   \item[(ii)] $\res(\frac{f}{(z-\lambda)^{h+1}},z_j)=\frac{R_j^{(h)}(z_j-\lambda)}{h!}$ $h=1,\dots,m-1$.
   \end{itemize}
   The point $(i)$ is a direct consequence of the fact that $\lambda$ is a pole of order $h+1$ of the function $\frac{f(z)}{(z-\lambda)^{h+1}}$. 
   Concerning $(ii)$ observe that $z_j$ is again a pole of order $d_j$ for the function $\frac{f(z)}{(z-\lambda)^{h+1}}$ so \[
      \res \left(  
         \frac{f}{(z-\lambda)^{h+1}},z_j\right) =
         \frac{1}{(d_j-1)!}   \frac{\partial^{d_j-1}}{\partial z^{d_j-1}}\left(\frac{f_j(z)}{(z-\lambda)^{h+1}}\right)(z_j).\] 
   One can prove by induction (see Appendix) that, 
   for each $d, h \in \mathbb N$: 
   \begin{equation}\label{ind2}
   \frac{\partial^{d-1}}{\partial z^{d-1}}\left(\frac{f_j(z)}{(z-\lambda)^{h+1}} \right)=\frac{(d-1)!}{h!}\sum_{l=1}^{d}(-1)^{l+h+1}\frac{(l+h-1)!}{(d-l)!(l-1)!} f_j^{(d-l)}(z)(z-\lambda)^{-(h+l)}
   \end{equation}
   Successive derivation of $R_j$ 
   repeated $h$ times yields:
   \[
   R_j^{(h)}(z)=\sum_{l=1}^{d_j}(-1)^{l+h+1}\frac{(l+h-1)!}{(d_j-l)!(l-1)!} f_j^{(d_j-l)}(z_j)z^{-(h+l)}, 
   \]
   and by setting $d=d_j$ in \eqref{ind2} we finally 
   get $(ii)$.
   \end{proof}
   \subsection{Functions  with poles}
   As a direct application of Corollary~\ref{cor:decay-func} we can give a concise statement in the case of simple poles.
   \begin{corollary}\label{cor:poles}
   Let $A\in\mathbb C^{m\times m}$ be a quasiseparable matrix with rank $k$, $z_0\in\mathbb C$ and $R'\in\mathbb R^+$ such that $R'^{-1}(A-z_0I)$ is enclosed in $(\rho,R_A,\mathcal V_A)$.
   Consider $R>R'$ and a function $f(z)$ holomorphic on the annulus $\mathcal A:=\{R'<|z-z_0|<R\}$. If the disc $B(z_0,R')$ contains $t$ simple poles of $f$ then  any off-diagonal block $\tilde C$ in $f(A)$ has singular values bounded by
   \[
   \sigma_l(\tilde C)\leq  \gamma e^{-\frac{(\alpha+\alpha')(l-tk)}{k}},\quad \alpha=\log\left(\frac{R_A}{\rho}\right),\quad \alpha'=\log\left( \frac{R}{R'}\right),
   \] 
   where $\gamma:= \max\limits_{|z-z_0|=R} |f(z)|\cdot  \kappa_{max}^2\cdot\norm {A-z_0I}_2\cdot \Lambda(\rho,R_A,\mathcal V_A,R)\cdot \frac{k\cdot  \rho}{R R_A-\rho R'}$ and $\kappa_{max}$ is the maximum among the spectral condition numbers of the trailing submatrices of $R'^{-1}(A-z_0I)$.
   \end{corollary}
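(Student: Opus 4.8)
The plan is to reduce everything to the holomorphic case of Corollary~\ref{cor:decay-func} by peeling off the contribution of the poles through the extended Dunford--Cauchy formula of Theorem~\ref{thm:ext-cauchy}. Set
\[
M := \frac{1}{2\pi i}\int_{\partial B(z_0,R)}(zI-A)^{-1}f(z)\,dz,
\]
where $\Gamma=\partial B(z_0,R)$ encloses both the spectrum of $A$ and the $t$ simple poles $z_1,\dots,z_t$ of $f$ lying in $B(z_0,R')$. First I would argue that every off-diagonal block $M_{\mathrm{off}}$ of $M$ obeys exactly the decay estimate of Corollary~\ref{cor:decay-func}. As observed at the beginning of Section~\ref{sec:poles}, in the expression \eqref{cbar} for an off-diagonal block the residue at $0$ selects only the coefficients $a_n$ of the Laurent expansion of $f$ with $n\geq 1$: the two resolvent expansions contribute the powers $z^{-(s+j+2)}$, so the coefficient of $z^{-1}$ forces $n=s+j+1\geq 1$, and the principal (negative-degree) part of $f$ produced by the poles never enters. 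Thus $M$ carries the same off-diagonal outer-product structure as $g(A)$ for the holomorphic function $g$ having the non-negative Laurent part of $f$, and the Cauchy estimate $|a_n|\leq \max_{|z-z_0|=R}|f(z)|\,R^{-n}$ on the outer circle of the annulus $\mathcal A$ supplies precisely the hypothesis of Theorem~\ref{thm:kryl-horn-decay}. Running the proofs of Theorem~\ref{thm:matrix-func-decay} and its rescaled version Corollary~\ref{cor:decay-func} verbatim then gives $\sigma_l(M_{\mathrm{off}})\leq \gamma e^{-(\alpha+\alpha')l/k}$ with exactly the constants $\alpha,\alpha',\gamma$ of the statement.

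Next I would invoke Theorem~\ref{thm:ext-cauchy} with $T=\{z_1,\dots,z_t\}$ and all orders $d_j=1$, which yields
\[
M = f(A) + \sum_{j=1}^t f_j(z_j)\,\mathfrak R(z_j),\qquad f_j(z)=(z-z_j)f(z),
\]
so that $f(A)=M-\sum_{j=1}^t f_j(z_j)(z_jI-A)^{-1}$. Since $A$ is $k$-quasiseparable and quasiseparability of a fixed rank is preserved under inversion, each resolvent $(z_jI-A)^{-1}$ is again $k$-quasiseparable; hence its off-diagonal block has rank at most $k$, and the off-diagonal block of the full correction $\sum_{j=1}^t f_j(z_j)(z_jI-A)^{-1}$ has rank at most $tk$. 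Consequently the target block $\tilde C$ of $f(A)$ equals $M_{\mathrm{off}}$ plus a matrix of rank at most $tk$.

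Finally I would close the argument with Lemma~\ref{lem:rank-corr}: applying it with the rank-$tk$ correction gives $\sigma_l(\tilde C)\leq \sigma_{l-tk}(M_{\mathrm{off}})$, and combining with the decay bound for $M_{\mathrm{off}}$ from the first step produces
\[
\sigma_l(\tilde C)\leq \gamma e^{-\frac{(\alpha+\alpha')(l-tk)}{k}},
\]
which is the claim. I expect the only genuinely delicate point to be the first step: one must ensure that, even though $f$ fails to be holomorphic inside $B(z_0,R)$, the object $M$ really inherits the off-diagonal decay of a holomorphic function and that the constant $\gamma$ transfers unchanged. This hinges on the residue computation above, showing that the principal parts of the poles are invisible to the off-diagonal blocks of $M$, together with checking that the Cauchy bound on the $a_n$ uses the outer radius $R$ exactly as in the holomorphic case. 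Once this is secured, the pole correction enters only through its rank, and the index shift by $tk$ in Lemma~\ref{lem:rank-corr} accounts precisely for the $(l-tk)$ appearing in the exponent.
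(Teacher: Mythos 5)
Your proposal is correct and follows essentially the same route as the paper's own proof: both split $f(A)$ into the contour integral (whose off-diagonal blocks obey Corollary~\ref{cor:decay-func} because the negative-degree Laurent coefficients are invisible in \eqref{cbar}) plus the rank-$tk$ resolvent correction from Theorem~\ref{thm:ext-cauchy}, and then shift the bound via Lemma~\ref{lem:rank-corr}. The only cosmetic difference is that you spell out the residue computation justifying the first step (which the paper delegates to the remark opening Section~\ref{sec:poles}) and place the contour at radius $R$ rather than $R'$, which is immaterial since the integrand is holomorphic throughout the annulus.
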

   \begin{proof}
   Let $f(z)=\sum\limits_{n\in\mathbb Z}a_nz^n$ be the series expansion of $f$ in $\mathcal A$ and $z_{1},\dots,z_{t}$ be the simple poles of $f$ inside $B(z_0,R')$. Then
   \[
   |a_j|\leq \norm{f(z)}_{\infty, \partial B(z_0,R)}\cdot \left(\frac{R'}{R}\right)^j,\qquad n\geq 0.
   \]
   According to what we observed at the beginning of Section~\ref{sec:poles} we can apply Corollary~\ref{cor:decay-func} to the off-diagonal singular values of $B:=\int_{\partial B(z_0,R')}f(z)(zI-A)^{-1}dz$. Moreover, using Theorem~\ref{thm:ext-cauchy} we get
   \[
   f(A)=B-\sum_{j=1}^tf_{j}(z_{j})\cdot(z_{j}I-A)^{-1}.
   \] 
   Observing that the right summand has at most quasiseparable rank $tk$ we can conclude, using Lemma~\ref{lem:rank-corr}, that the bound on the singular values of $f(A)$ is the same which holds for $B$, but shifted by the quantity $t\cdot k$.
   \end{proof}

   \subsection{Functions with essential singularities}
   Consider the case of a function $f(z)$ holomorphic in $\mathbb C\setminus\{a\}$ with an essential singularity in $a$. Moreover, suppose that $a$  is not an eigenvalue of the argument $A\in\mathbb C^{m\times m}$. In a suited punctured disk $B(a,R)\setminus\{a\}$ --- which contains the spectrum of $A$ --- we can expand $f$ as
   \[
   f(z):=\sum_{n\in\mathbb Z}a_n(z-a)^n.
   \] 
   In particular we can decompose $f$ as $f_1(z-a)+f_2((z-a)^{-1})$ with $f_i$ holomorphic on $B(0,R)$ for $i=1,2$. 
   Therefore
   \[
   f(A)=f_1(A-aI)+f_2((A-aI)^{-1}).
   \]
   Since $f_1$ and $f_2$ are both holomorphic and the operations of shift and inversion preserve the quasiseparable rank we can apply Theorem~\ref{thm:matrix-func-decay} and Lemma~\ref{lem:sumdecay} in order to get estimates on the off-diagonal singular values of $f(A)$.
   
   One can use this approach in the case of finite order poles and find equivalent bounds to Corollary~\ref{cor:poles}, although in a less explicit form.   
   
   \subsection{Functions with branches}
   
   We conclude this section describing how to re-adapt the approach in the case of functions with multiple branches. The same trick can be used to deal with other scenarios, such as the presence
   of singularities that has been described previously. 
   
   The main idea is that, in the integral definition of a matrix function, the 
   path $\Gamma$ does not need to be a single Jordan curve, but can be 
   defined as a union of a finite number of them. The only requirement is that the 
   function is analytic in the Jordan regions, and that the spectrum
   is contained in their union. 
   
   In our setting, it might happen that we cannot enclose the spectrum in a single
   ball without capturing also the branching point. However, it is always possible to
   cover it with the union of a finite number of such balls. In this context, 
   assuming that the path $\Gamma$ is split as the borders of $t$ balls, denoted
   by $\Gamma_1, \ldots, \Gamma_t$, one has 
   \[
     f(A) = \sum_{i = 1}^t \int_{\Gamma_i} f(z) \mathfrak R(z) dz. 
   \]
   Assuming that the number $t$ is small enough, we can obtain the 
   numerical Quasiseparability of $f(A)$ by the quasiseparability
   of each of the addends and then
   relying on Lemma~\ref{lem:sumdecay}. 
   Inside each $\Gamma_i = B(z_i, r_i)$ 
   we can perform the change of variable
   $\tilde z := r_i (z - z_i)$ 
   and write the resolvent as (here the coefficient
   $D$ will be different by scaling and translation in every $\Gamma_i$): 
   \[
   \mathfrak R(\tilde z) = \begin{bmatrix}
   * & * \\
   (\tilde zI - D)^{-1} C(\tilde z) S_{D}(\tilde z)^{-1} & * \\
   \end{bmatrix}, 
   \qquad 
   \begin{cases}
     (\tilde zI - D)^{-1} = \sum_{j \in \mathbb Z} D_j \tilde z^j \\
     S_D^{-1}(\tilde z) = \sum_{s \in \mathbb Z} H_s \tilde z^s \\
   \end{cases}
   \]
   The construction of the coefficients $D_j$ can be done
   by writing $D$ in Jordan canonical from as 
   \[
     V^{-1} D V = \begin{bmatrix}
       J_{\text{in}} \\
       & J_{\text{out}} \\
     \end{bmatrix}, \qquad 
     V = \begin{bmatrix}
       V_{1} & V_{2} \\
     \end{bmatrix}, \quad 
     V^{-1} = \begin{bmatrix}
       W_{1}  \\
       W_{2}  \\
     \end{bmatrix}
   \]
   where $J_{\text{in}}$ refers to the part of the spectrum 
   inside $\Gamma_i$, and $J_{\text{out}}$ the one outside. 
   Thanks to the change of variable in the integral, this
   corresponds to asking that
   the spectrum of $J_{\text{in}}$ is inside the unit disc, and
   the one of $J_{\text{out}}$ outside. 
   Then, one has the following definition for $D_j$: 
   \[
     D_j = \begin{cases}
        V_{1} J_{\text{in}}^{-j-1} W_{1} & j < 0 \\
       -V_{2} J_{\text{out}}^{-j-1} W_{2} & j \geq 0 \\
     \end{cases},
   \]
   and an analogous formula holds for the coefficients $H_s$. This
   provides the Laurent expansion of the off-diagonal block
   in the integrand. A similar analysis to the one carried
   out in the previous sections can be used to retrieve
   the decay on the singular values of this block.

       \section{Computational aspects and validation of the bounds}
       \label{sec:computational}
       
       In the previous sections
       we have proved that the numerical
       quasiseparable structure is often present
       in $f(A)$. This property can be used to speed up the matrix arithmetic operations and then to efficiently evaluate $f(A)$ by means of contour integration. We briefly describe the strategy in the next subsections and we refer the reader to \cite{hackbusch2016hierarchical} for more details. In Section~\ref{sec:validation} we will compare our bounds with the actual decay in some concrete cases.
       \subsection{Representation and arithmetic operations}
        In order to take advantage of the quasiseparable structure we need a representation that enable us to perform the storage and the matrix operations cheaply. 
        We rely on the framework of Hierarchical representations originally introduced by Hackbusch \cite{hackbusch1999sparse,hackbusch2016hierarchical} in the context of integral and partial differential equations. It consists in a class of recursive block representations with structured sub-matrices that allows the treatment of a number of data-sparse patterns. Here, we consider a particular member  of this family --- sometimes called Hierarchical off-diagonal low-rank representation (HODLR) --- which has a simple formulation and an effective impact in handling quasiseparable matrices.  
        
        Let $A\in\mathbb{C}^{m\times m}$ be a $k$-quasiseparable matrix and consider the partitioning
            \[A=\left[\begin{smallmatrix}A_{11}&A_{22}\\
            A_{21}&A_{22}\end{smallmatrix}\right],\]where 
            $A_{11}\in\mathbb C^{m_1\times m_1}$, 
              $A_{22}\in\mathbb C^{m_2\times m_2}$,
              with $m_1:=\lfloor \frac{m}{2} \rfloor $ and $m_2:=\lceil \frac{m}{2} \rceil$. 
              Observe that the antidiagonal blocks $A_{12}$ and $A_{21}$ do not involve any element of the main diagonal of $A$, hence we
              can represent them
              in a compressed form as an outer product of rank $k$.
              Moreover, the diagonal blocks $A_{11}$ and $A_{22}$ are square matrices which are again $k$-quasiseparable. Therefore it is possible to re-apply this procedure recursively. We stop when the diagonal blocks reach a minimal dimension $m_{\text{min}}$, and we store them as full matrices. The process is described graphically in Figure~\ref{fig:Hmatrices}.
              
              \begin{figure}[!ht]
            \centering
            \includegraphics[width=0.2\textwidth]{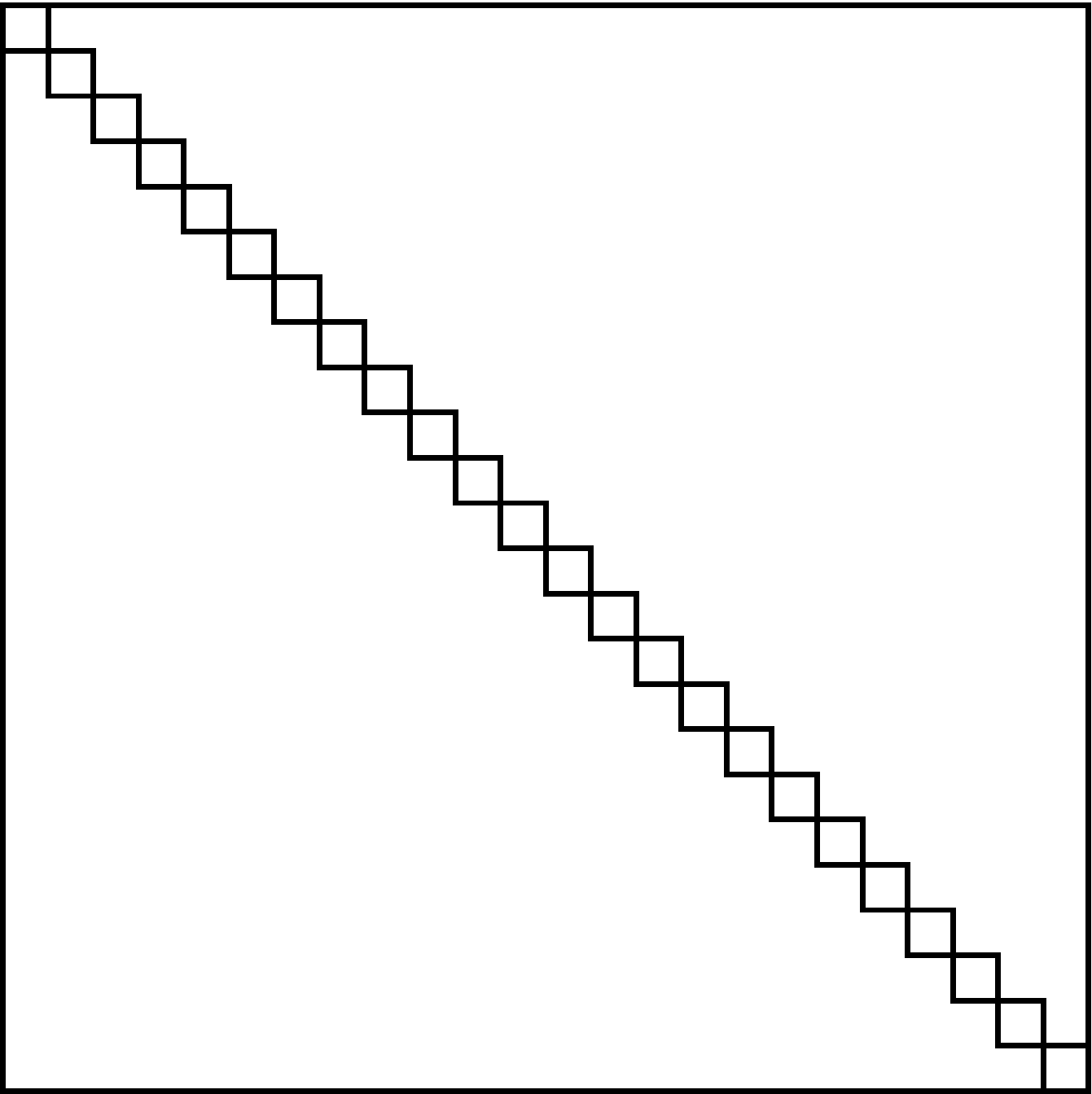}\qquad 
            \includegraphics[width=0.2\textwidth]{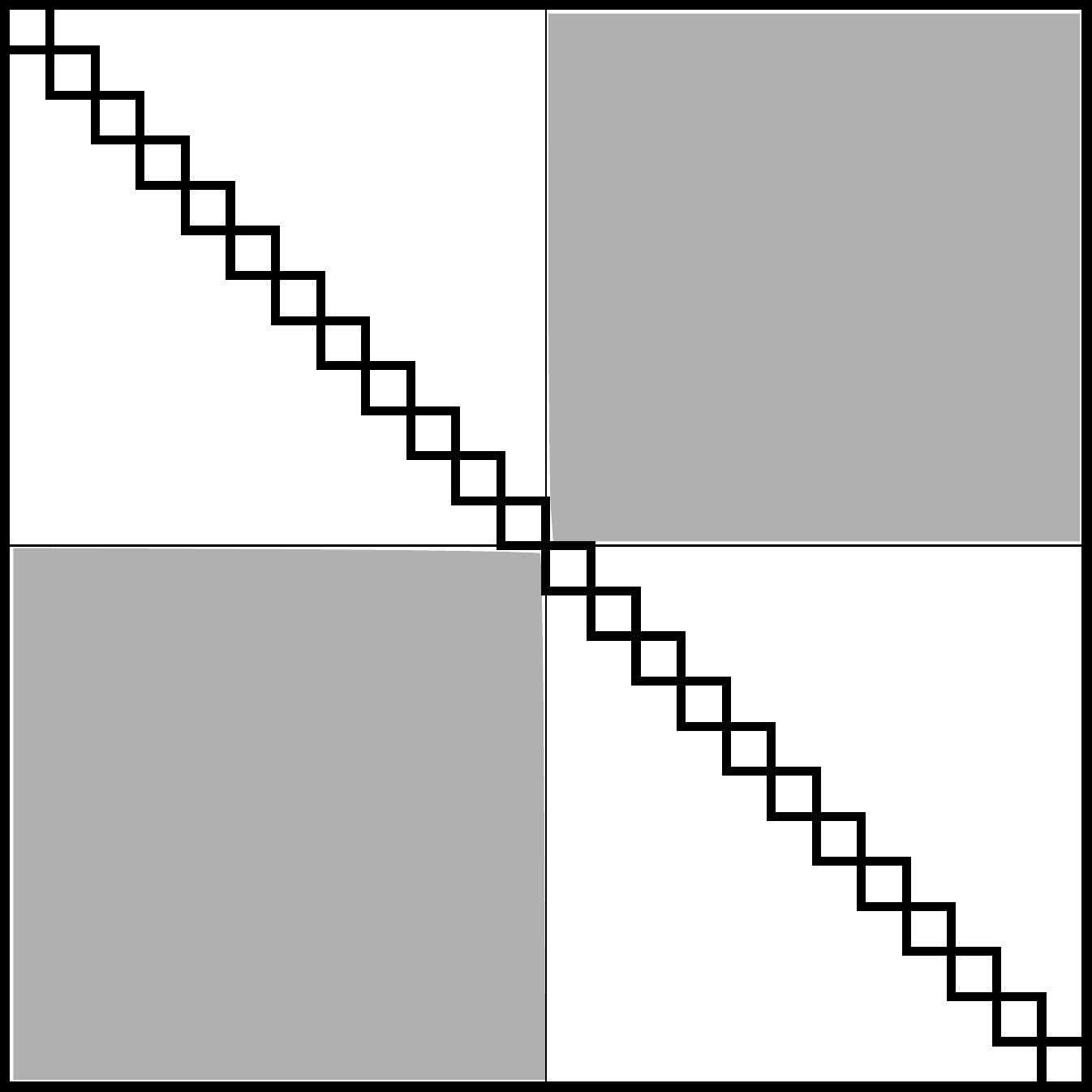}
            \qquad 
            \includegraphics[width=0.2\textwidth]{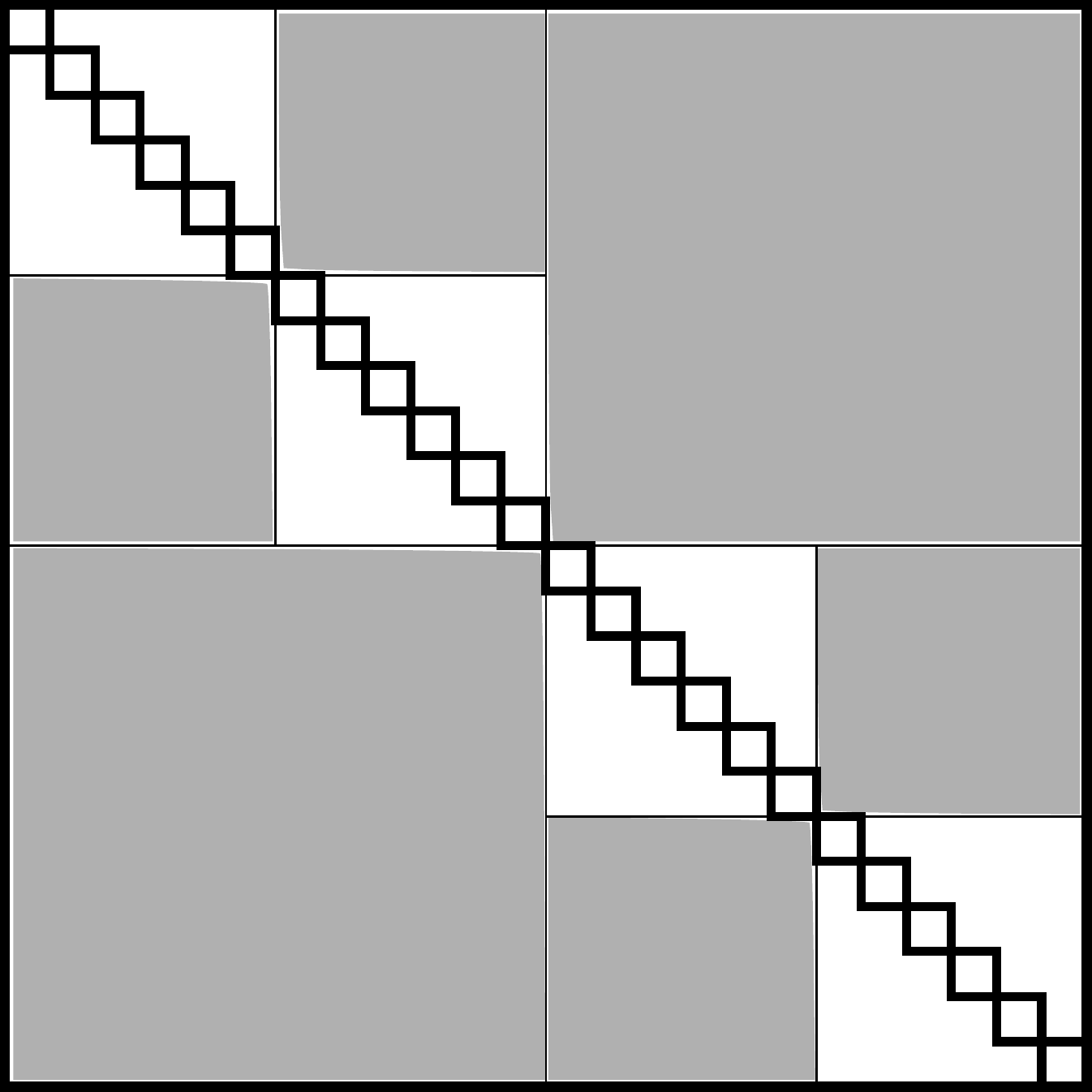}\qquad 
            \includegraphics[width=0.2\textwidth]{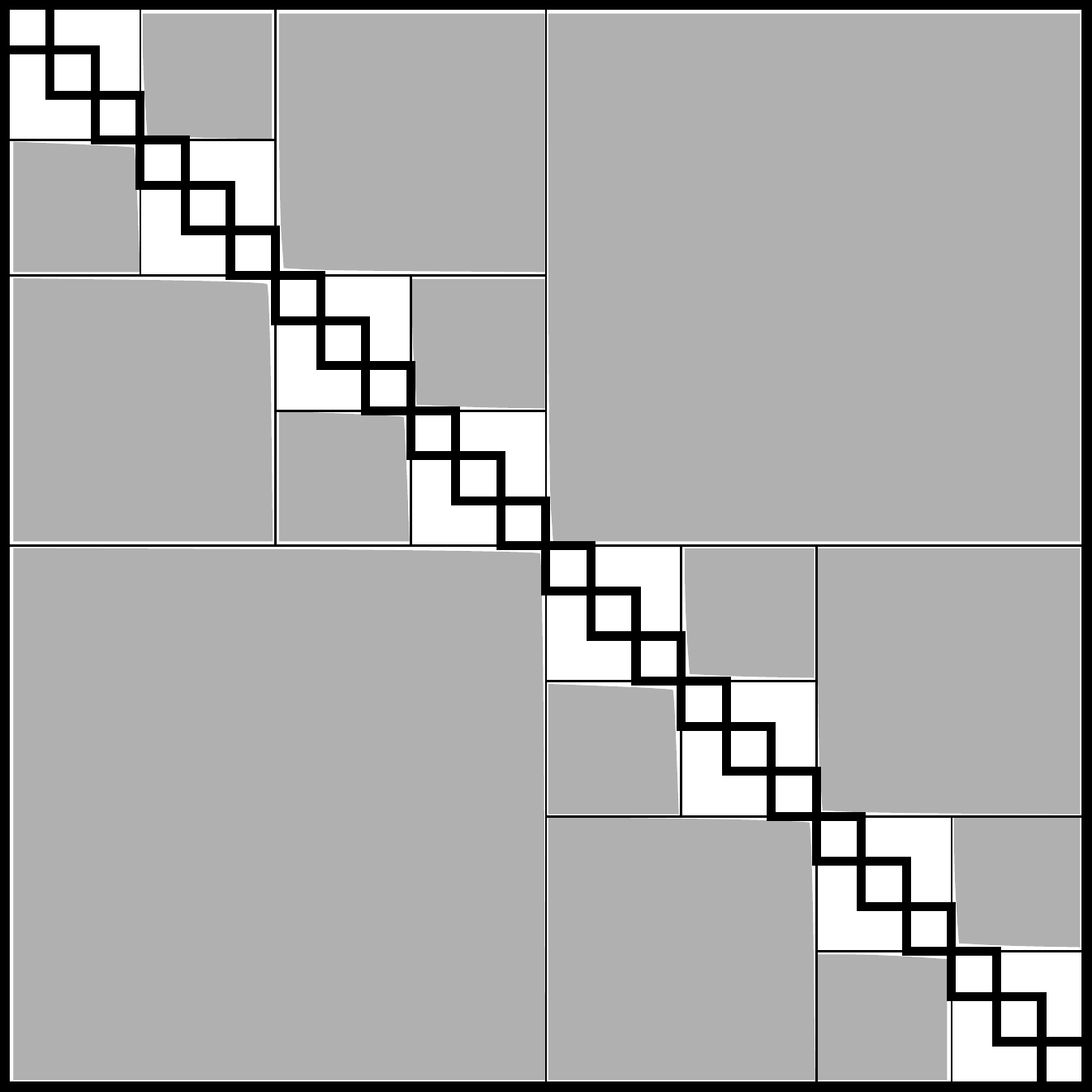}
             \caption{The behavior of the block partitioning in the HODLR-matrix representation. The blocks filled with grey are low rank matrices represented in a compressed form, and the diagonal blocks in the last step are stored as dense matrices.}\label{fig:Hmatrices}
            \end{figure}
            
            If $m_{\text{min}}$ and $k$ are negligible with respect to $m$ then the storage cost of each sub-matrix is $O(m)$. Since the levels of the recursion are $O(\log(m))$, this yields a linear-polylogarithmic memory consumption with respect to the size of the matrix.
            
            The HODLR representation acts on a matrix by compressing many of its sub-blocks. Therefore, it is  natural to perform the arithmetic operations in a block-recursive fashion. The basic steps of these procedures require arithmetic operations between low-rank matrices or  $m_{min}\times m_{min}$-matrices. If the rank of the off-diagonal blocks is small compared to $m$, then the algorithms performing the arithmetic operations have linear polylogarithmic complexities \cite{borm}[Chapter 6]. The latter are summarized in Table~\ref{tab:complexity} where it is assumed that the constant $k$ bounds the 
            quasiseparable rank of all the matrices involved. Moreover, the operations are performed adaptively with respect to the rank of the blocks. This means that the result
            of an arithmetic operation will be an HODLR matrix with the same partitioning, where each low rank block is a truncated reduced SVD of the corresponding block of
            the exact result. This operation can be carried out with linear cost, assuming the quasiseparable stays negligible with respect to $m$.   Hence the rank is not fixed a priori but depends on a threshold $\epsilon$ at which the
            truncation is done. We refer to
                        \cite{hackbusch2016hierarchical} for a complete description. In our experiments we set $\epsilon$ equal to the machine precision $2.22\cdot 10^{-16}$ and $m_{\text{min}}=64$.
            \begin{table}
                                \begin{center}                   
                                \resizebox{0.8\textwidth}{!} {
                 \footnotesize \begin{tabular}{cc}
                  \hline
                   Operation & Computational complexity \\ \hline  
                  Matrix-vector multiplication & $O(k m\log(m))$\\
                  Matrix-matrix addition & $O(k^2 m\log(m))$\\
                  Matrix-matrix multiplication & $O(k^2 m\log(m)^2)$\\
                  Matrix-inversion & $O(k^2 m\log(m)^2)$\\
                  Solve linear system & $O(k^2 m\log(m)^2)$\\
                   \hline
            
                 \end{tabular}
                }
                \end{center}
                \caption{Computational complexity of the HODLR-matrix arithmetic. The operation \emph{Solve linear system}
                 comprises to compute the LU factorization of the coefficient matrix and to solve the two triangular linear systems}\label{tab:complexity}
                \end{table}
       \subsection{Contour integration}
       
       The Cauchy integral formula \eqref{cauchyformula}
       can be used to approximate $f(A)$ by means
       of a numerical integration scheme. Recall that,
       given a complex valued function $g(x)$ defined
       on an interval $[a,b]$ one can approximate
       its integral by
       \begin{equation} \label{quadrature}
         \int_{a}^b g(x) dx\ \approx \ \sum_{k = 1}^N w_k
         \cdot g(x_k)
       \end{equation}
       where $w_k$ are the \emph{weights} and 
       $x_k$ are the \emph{nodes}. Since we
       are interested in integrating 
       a function on $S^1$ 
       we can write
       \[
         \frac{1}{2\pi i}\int_{S^1} f(z) (zI - A)^{-1} dz = 
         \frac{1}{2\pi}\int_{0}^{2\pi} f(e^{ix}) (I - e^{-ix} A)^{-1}  dx, 
       \]
       where we have parametrized $S^1$ by means of 
       $e^{ix}$. The right-hand side can be approximated
       by means of \eqref{quadrature}, 
       so we obtain:
       \begin{equation}\label{quadrature2}
        f(A) \approx \frac{1}{2\pi}\sum_{k = 1}^N w_k
              \cdot f(e^{ix_k}) (I - e^{-ix_k} A)^{-1} = 
           \frac{1}{2\pi}\sum_{k = 1}^N w_k
                      \cdot e^{ix_k} f(e^{ix_k})\mathfrak R(e^{ix_k}).   
       \end{equation}
       
       \begin{algorithm}
       \caption{Pseudocode for the evaluation of a contour integral on $S^1$}\label{alg:contour}
       \begin{algorithmic}[1]
       \Procedure{ContourIntegral}{$f,A$}\Comment{Evaluate $\frac{1}{2\pi i}\int_{S^1}f(z)(zI-A)^{-1}dz$}
       \State $N\gets1$
       \State $M\gets f(1)\cdot(I-A)^{-1}$
       \State $err\gets\infty$
       \While{$err> \sqrt u$}
       \State $M_{\text{old}}\gets M$
       \State $M\gets\frac 12 M$\Comment{The new weights are applied to the old evaluations}
       \State $N\gets 2N$
       \For{$j=1,3,\dots,N-1$}\Comment{Sum the evaluations on the new nodes}
       \State $z \gets e^{\frac{2 \pi i j}{N}}$
       \State $M\gets M + \frac{z f(z)}N \cdot (zI - A)^{-1}$
       \EndFor
       \State $err \gets \lVert M - M_{\text{old}} \rVert_2$
       \EndWhile
       \State \Return $M$
       \EndProcedure
       \end{algorithmic}
       \end{algorithm}
       This approach has already been explored in \cite{gavrilyuk2002mathcal}, mainly for the computation
           of $f(A) b$ due to the otherwise high cost
           of the inversions in the general case. The pseudocode of the procedure is reported in Algorithm~\ref{alg:contour}.
           
            Algorithm~\ref{alg:contour} --- based on \eqref{quadrature2} --- can be carried out cheaply when
           $A$ is represented as an HODLR-matrix, since the inversion
           only requires $O(m \log^2(m))$ flops. Moreover, not only
           the resolvent $\mathfrak R(e^{ix_k})$ is representable as
           a HODLR-matrix, but the same holds for the final result
           $f(A)$ in view of Theorem~\ref{thm:matrix-func-decay}. This
           guarantees the applicability of the above strategy even
           when dealing with large dimensions.
       
        The results in Section~\ref{sec:poles} 
       enable us to deal with functions having
       poles inside the domain of integration. The only
       additional step that is required is to 
       compute the correction term
       described in Theorem~\ref{thm:ext-cauchy}. Notice
       that this step just requires additional 
       evaluations
       of the resolvent
       and so does not change the asymptotic complexity of the
       whole procedure. 
       
       We show now an example where Theorem~\ref{thm:ext-cauchy} can be used to derive
       an alternative algorithm for the evaluation of
       matrix functions with poles inside the domain. 
       
       More precisely, we consider a matrix $A$ with 
       spectrum contained in the unit disc, and the evaluation
       of the matrix function $f(A)$ with $f(z) = \frac{e^{z}}{\sin(z)}$. Applying Theorem~\ref{thm:ext-cauchy} yields
       \[
         f(A) = \int_{S^1} f(z) \mathfrak R(z) dz
           + A^{-1}. 
       \]
       One can then choose to obtain $f(A)$ by computing
       $e^A \cdot ( \sin A )^{-1}$, which requires the 
       evaluation of two integrals and one inverse, or 
       using the above formula, which only requires 
       one integral and an inverse. 
       
        We used an adaptive doubling strategy for the number of nodes i.e., starting with $N$-th roots of the unit for a small value of $N$. We apply the quadrature rule \eqref{quadrature2}  and  we double $N$ until the quality of the approximation is satisfying. In order to check this, we require that the norm of the difference between two consecutive  approximations is smaller than a certain threshold. The $2$-norm of an HODLR-matrix can be estimated in linear time as shown in \cite{hackbusch2016hierarchical}.  Since the quadrature rule is quadratically convergent \cite{trefethen2014exponentially} and the  magnitude of the distance between the approximations at step $k$ and $k+1$ is a heuristic estimate for the error at step $k$ we choose as threshold $\sqrt u$ where $u$ is the unit round-off. In this way we should get an error of the order of $u$.
        
       We show in Table~\ref{tab:sin}, where the approach relying
       on Theorem~\ref{thm:ext-cauchy} and 
       on computing the function separately are
        identified by
       the labels
       ``sum'' and ``inv'', respectively, 
       that the first choice is
       faster (due to the reduced number of inversions required)
       and has a similar accuracy. The matrices
       in this example have been chosen to be $1$-quasiseparable and Hermitian, and we have verified
       the accuracy of the 
       results by means of a direct application of Definition~\ref{def:matrix-func1}. 
       In particular, the timings confirm the almost linear complexity of the procedure.

       \begin{table}
         \centering
         \pgfplotstabletypeset[
   	      columns={0, 4, 6, 7, 9}, 
   	      columns/0/.style={
   		      column name = Size	
   	      },
   	      columns/4/.style={
   	      	column name = $t_{\text{inv}}$,
   	      	postproc cell content/.style={
   	      		/pgfplots/table/@cell content/.add={}{ s}
   	      	}
   	      },
   	      columns/6/.style={
   	      	column name = $\Res_{\text{inv}}$
   	      },
   	      columns/7/.style={
   	      	column name = $t_{\text{sum}}$,
   	      	postproc cell content/.style={
   	      		/pgfplots/table/@cell content/.add={}{ s}
   	      	}
   	      },
   	      columns/9/.style={
   	      	column name = $\Res_{\text{sum}}$
   	      }
         ]{expsin.txt}
         \caption{Timing and accuracy on the computation
         	of the matrix function $f(z) = e^z \sin(z)^{-1}$ 
         	on a $1$-quasiseparable Hermitian matrix $A$ with spectrum contained
         	the unit disc. The residues are measured relatively
         	to the norm of the computed matrix function $f(A)$.}
         \label{tab:sin}
       \end{table}

     \subsection{Validation of the bounds}\label{sec:validation}
     
     This section is devoted to check the accuracy of the
     estimates for the singular values that we have
     proved in the paper. In order to do so we compute
     some matrix function on quasiseparable matrices and
     verify the singular values decay in one large off-diagonal block. In particular, for a matrix of order $m$ --- $m$ even --- we consider the off-diagonal block with row indices from $\frac m2+1$ to $m$ and column indices from $1$ to $\frac m2$. Then, we compare the obtained result
     with the theoretical bound coming from Theorem~\ref{thm:matrix-func-decay}. Notice that Theorem~\ref{thm:matrix-func-decay} provides a family of bounds depending on a parameter $R$ which can be chosen as long as $f(z)$ is holomorphic in $B(0,R)$. So, in every experiment we estimated the $l$-th singular value by choosing the parameter $R$ which provides the tighter bound, among the admissible values for the function $f$ under consideration.

     We choose two particular classes of $1$-quasiseparable matrices for the tests, since we can easily determine
     the bounds on them: 
     \begin{description}
     	\item[Hermitian tridiagonal matrices] These
     	matrices are generated with elements taken from
     	a random Gaussian distribution $N(0,1)$, and
     	are then scaled and
     	  shifted so that their spectrum is contained in a 
     	  ball of center $0$ and radius $\frac 3 4$. These
     	  matrices are normal and the same holds for their
     	  submatrices, so we can avoid the computation
     	  of the constants $\kappa_s(\cdot)$ which are
     	  all equal to $1$. 
     	\item[Hessenberg (scaled) unitary matrices] 
     	  We consider a random unitary matrix which is also
     	  upper Hessenberg, and so in particular it is
     	  $1$-quasiseparable (since unitary matrices
     	  are rank symmetric - the rank of the lower off-diagonal blocks is equal to the corresponding
     	  block above). We then scale the matrices multiplying
     	  by $\frac 3 4$, in order to keep the spectrum on 
     	  the circle of radius $\frac 3 4$. We obtain
     	  these matrices in MATLAB by running
     	  the command
     	  \verb/[A,~] = .75 * qr(hess(randn(N)));/ where $N$ is the chosen 
     	  dimension. 
     \end{description}
     
     As a first example we consider the matrix exponential
     $e^A$ which can be easily computed by means of
     {\tt expm}. We have computed it for many random
     tridiagonal matrices of size $1000 \times 1000$, 
     and the measured and theoretical decays in the submatrix $e^A(501:1000,1:500)$ are 
     reported in  Figure~\ref{fig:exptridiag}. 
     
     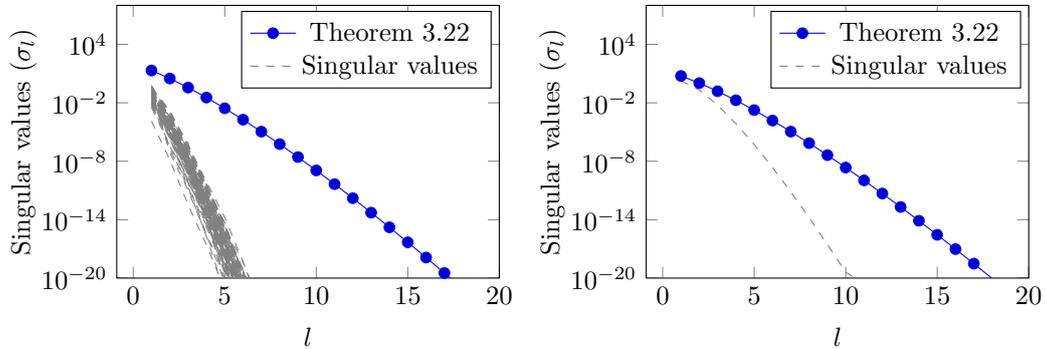
\begin{figure}
     	\centering
%     	\tikzsetnextfilename{expbound}
     	\begin{tikzpicture}% table
     	\begin{semilogyaxis}[
     	   xlabel=$l$,ylabel=Singular values ($\sigma_l$),
     	   ymin = 1e-20, xmax = 20,ymax=1e8,
     	   width=.45\linewidth,
     	   height=.25\textheight,
          ytick={1e-20,1e-14,1e-8,1e-2,1e4},   	   
     	   ]
     	\addplot table[x index=0,y index=1] {expbound.txt};
     	\foreach \j in {2, ..., 101} {
   	  	\addplot[gray, no marks,thin,dashed]	table[x index = 0, y index = \j] {expbound.txt};
     	}
     	\legend{Theorem~\ref{thm:matrix-func-decay}, Singular values};
     	\end{semilogyaxis}
     	\end{tikzpicture}
     	% \tikzsetnextfilename{exp2bound}
     	\begin{tikzpicture}% table  	  	
     	\begin{semilogyaxis}[
     	xlabel=$l$,ylabel=Singular values ($\sigma_l$),
     	ymin = 1e-20, xmax = 20,ymax=1e8,
     	width=.45\linewidth,
     	height=.25\textheight,
     	ytick={1e-20,1e-14,1e-8,1e-2,1e4}, 
     	]
     	\addplot table[x index=0,y index=1] {exp2bound.txt};
   	\addplot[gray, no marks,thin,dashed]	table[x index = 0, y index = 2] {exp2bound.txt};
     	\legend{Theorem~\ref{thm:matrix-func-decay}, Singular values};
     	\end{semilogyaxis}
     	\end{tikzpicture}
     	\caption{On the left, the bound on the singular values of the off-diagonal matrices of $e^A$ for $100$
     		random Hermitian tridiagonal matrices scaled in order to have spectral radius
     		$\frac 3 4$ are shown. In the right picture the same experiment with a scaled upper
     		Hessenberg unitary matrix is reported (with $1$ matrix only).}
     	\label{fig:exptridiag}
     \end{figure}
     
       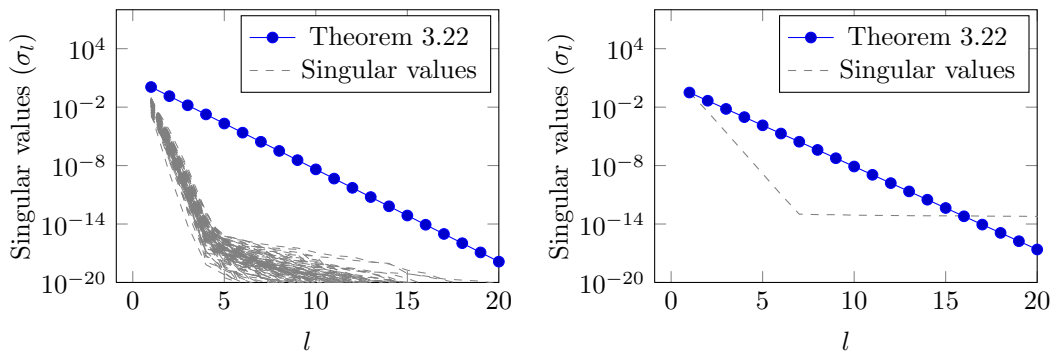
\begin{figure}
       	\centering
       	% \tikzsetnextfilename{logbound}
       	\begin{tikzpicture}% table
       	\begin{semilogyaxis}[
       	xlabel=$l$,ylabel=Singular values ($\sigma_l$),
       	ymin = 1e-20, xmax = 20,ymax=1e8,
       	width=.45\linewidth,
       	height=.25\textheight,
           ytick={1e-20,1e-14,1e-8,1e-2,1e4},   	       	
       	]
       	\addplot table[x index=0,y index=1] {logbound.txt};
       	\foreach \j in {2, ..., 101} {
       		\addplot[gray,no marks,very thin,dashed]	table[x index = 0, y index = \j] {logbound.txt};
       	}
       	\legend{Theorem~\ref{thm:matrix-func-decay}, Singular values};
       	\end{semilogyaxis}
       	\end{tikzpicture}~~\begin{tikzpicture}% table  	  	
       	\begin{semilogyaxis}[
       	xlabel=$l$,ylabel=Singular values ($\sigma_l$),
       	ymin = 1e-20, xmax = 20,ymax=1e8,
       	width=.45\linewidth,
       	height=.25\textheight,
       	ytick={1e-20,1e-14,1e-8,1e-2,1e4},   	   
       	]
       	\addplot table[x index=0,y index=1] {log2bound.txt};
       	\addplot[gray, no marks,thin,dashed]	table[x index = 0, y index = 2] {log2bound.txt};
       	\legend{Theorem~\ref{thm:matrix-func-decay}, Singular values};
       	\end{semilogyaxis}
       	\end{tikzpicture}
       	\caption{The picture reports the same experiment of Figure~\ref{fig:exptridiag}, 
       		with the logarithm in place of the exponential. The matrices have however
       		been shifted by $4I$ in order to make the function well-defined. Since this corresponds to evaluating the function
       		$\log(z + 4)$ on the original matrix, one can also find a suitable ball centered
       		in $0$ where the function is analytic.}
       	\label{fig:logtridiag}
       \end{figure}

   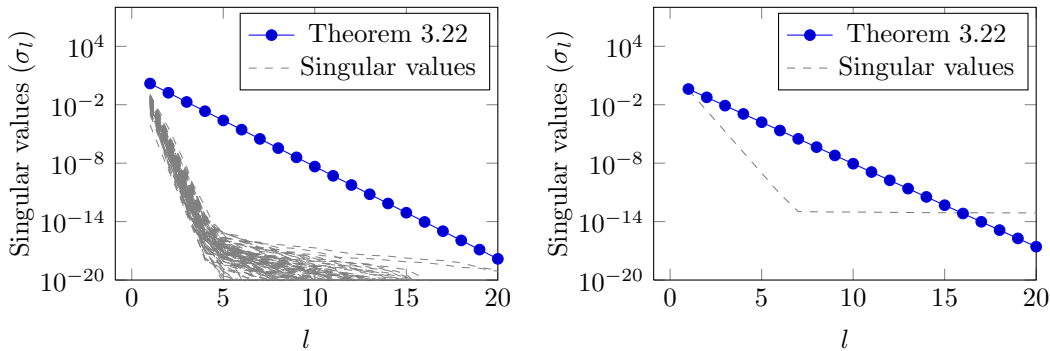
\begin{figure}
   	\centering
%   	\tikzsetnextfilename{sqrtbound}
   	\begin{tikzpicture}% table
   	\begin{semilogyaxis}[
   	xlabel=$l$,ylabel=Singular values ($\sigma_l$),
   	ymin = 1e-20, xmax = 20, ymax=1e8,
   	width=.45\linewidth,
   	height=.25\textheight,
   	ytick={1e-20,1e-14,1e-8,1e-2,1e4},   	   
   	]
   	\addplot table[x index=0,y index=1] {sqrtbound.txt};
   	\foreach \j in {2, ..., 101} {
   		\addplot[gray,no marks,very thin,dashed]	table[x index = 0, y index = \j] {sqrtbound.txt};
   	}
   	\legend{Theorem~\ref{thm:matrix-func-decay}, Singular values};
   	\end{semilogyaxis}
   	\end{tikzpicture}~~\begin{tikzpicture}% table  	  	
   	\begin{semilogyaxis}[
   	xlabel=$l$,ylabel=Singular values ($\sigma_l$),
   	ymin = 1e-20, xmax = 20,ymax=1e8,
   	width=.45\linewidth,
   	height=.25\textheight,	
   	ytick={1e-20,1e-14,1e-8,1e-2,1e4},   	   
   	]
   	\addplot table[x index=0,y index=1] {sqrt2bound.txt};
   	\addplot[gray, no marks,thin,dashed]	table[x index = 0, y index = 2] {sqrt2bound.txt};
   	\legend{Theorem~\ref{thm:matrix-func-decay}, Singular values};
   	\end{semilogyaxis}
   	\end{tikzpicture}
   	\caption{In the left picture the bounds on the singular values of the off-diagonal
   		 matrices of $\sqrt{4I + A}$ for $100$
   		random Hermitian tridiagonal matrix scaled in order to have spectral radius
   		$\frac 3 4$ are shown. In the right picture the same experiment is repeated for a
   		scaled and shifted upper Hessenberg unitary matrix.}
   	\label{fig:sqrttridiag}
   \end{figure}    
       
     Similarly, in Figure~\ref{fig:logtridiag} we have reported the analogous experiment concerning the function $\log(4I + A)$. In fact, in order for the logarithm to be well
     defined, we need to make sure that the spectrum of the matrix inside the logarithm
     does not have any negative value. 
     
     As a last example for the tridiagonal matrices we have considered the case of the
     function $\sqrt{4I+A}$, where the matrix has been shifted again in order to obtain
     a reasonable estimate by moving the spectrum away from the branching point. The result for this
     experiment is reported in Figure~\ref{fig:sqrttridiag}. 
     
     In the same figures we have reported also the experiments in the case of the 
     scaled unitary Hessenberg matrix. In this case the variance in the behavior 
     of the singular values was very small in the experiments, and so we have only reported
     one example for each case. 
     
     Notice that while in the symmetric (or Hermitian) case every trailing diagonal submatrix
     is guaranteed to be normal, this is not true anymore for the scaled unitary Hessenberg matrices. 
     Nevertheless, one can verify in practice that these matrices are still not far from normality, 
     and so the bounds that we obtain do not degrade much. 
   \section{Concluding remarks}
   \label{sec:concludingremarks}
   The numerical preservation of the quasiseparable structure when computing a matrix function is an evident phenomenon. Theoretically, this can be explained by the existence of accurate rational approximants of the function over the spectrum of the argument. In this work we have given a closer look to the off-diagonal structure of $f(A)$ providing concrete bounds for its off-diagonal singular values. The off-diagonal blocks have been described as a product between structured matrices with a strong connection with Krylov spaces. This ---combined with polynomial interpolation techniques--- is the key for proving the bounds. 
   
   Moreover, we have developed new tools to deal with the difficulties arising in the treatment of singularities and branching points. In particular the formula of Corollary~\ref{cor:poles} can be employed with the technology of Hierarchical matrices for efficiently computing matrix functions with singularities. An example of this strategy has been provided along with the numerical validation of the bounds.   
     
     \appendix
     \section{Appendix}
   
     \begin{proposition}
       Let $f\in\mathcal C^{\infty}(\mathbb C)$ and $\lambda\in\mathbb C$ then
       \[
       \frac{\partial^{d-1}}{\partial z^{d-1}}\left(\frac{f(z)}{(z-\lambda)^{h+1}} \right)=\frac{(d-1)!}{h!}\sum_{l=1}^{d}(-1)^{l+h+1}\frac{(l+h-1)!}{(d-l)!(l-1)!} f^{(d-l)}(z)(z-\lambda)^{-(h+l)},\qquad \forall d\in\mathbb N^+,h\in\mathbb N.      
       \]
       \end{proposition}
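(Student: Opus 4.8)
The plan is to prove the identity by induction on $d$ with $h\in\mathbb N$ held fixed, since the right-hand side is tailored precisely to reproduce its own shape under a single differentiation. It is convenient to abbreviate the coefficients appearing in the sum by $c_{d,l}:=(-1)^{l+h+1}\frac{(l+h-1)!}{(d-l)!(l-1)!}$, so that the claim reads $\frac{\partial^{d-1}}{\partial z^{d-1}}\bigl(f(z)(z-\lambda)^{-(h+1)}\bigr)=\frac{(d-1)!}{h!}\sum_{l=1}^{d}c_{d,l}\,f^{(d-l)}(z)(z-\lambda)^{-(h+l)}$. The base case $d=1$ reduces to the single term $l=1$ and is an immediate check against the undifferentiated function.

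For the inductive step I would assume the formula at order $d$ and differentiate both sides once. Applying the product rule to each summand $f^{(d-l)}(z)(z-\lambda)^{-(h+l)}$ on the right produces two families of terms: one in which $f$ is differentiated, contributing $f^{(d-l+1)}(z)(z-\lambda)^{-(h+l)}$ with coefficient $c_{d,l}$, and one in which the pole factor is differentiated, contributing $-(h+l)\,f^{(d-l)}(z)(z-\lambda)^{-(h+l+1)}$. Reindexing the second family by $l\mapsto l-1$ so that both families carry the common monomial $f^{(d+1-l)}(z)(z-\lambda)^{-(h+l)}$, and adopting the conventions $c_{d,0}=c_{d,d+1}=0$ to absorb the boundary values $l=1$ and $l=d+1$, the coefficient of that monomial becomes $c_{d,l}-(h+l-1)\,c_{d,l-1}$.

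The heart of the argument, and the step I expect to be the main obstacle, is then the purely algebraic recurrence
\[
c_{d,l}-(h+l-1)\,c_{d,l-1}=d\,c_{d+1,l},
\]
which is exactly what converts the prefactor $\frac{(d-1)!}{h!}$ into $\frac{d!}{h!}$ and yields the formula at order $d+1$. After inserting the definition of $c$ and using $(h+l-1)(l+h-2)!=(l+h-1)!$ to align the factorials, this recurrence collapses to the elementary Pascal-type identity $\frac{1}{(d-l)!(l-1)!}+\frac{1}{(d+1-l)!(l-2)!}=\frac{d}{(d+1-l)!(l-1)!}$, which after clearing denominators is just $(d+1-l)+(l-1)=d$. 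The only genuinely delicate point is the careful tracking of the alternating signs $(-1)^{l+h+1}$ through the reindexing of the second family; everything else is routine. An alternative, non-inductive route would apply the general Leibniz rule to $f\cdot(z-\lambda)^{-(h+1)}$ directly, evaluating $\frac{d^{k}}{dz^{k}}(z-\lambda)^{-(h+1)}=(-1)^{k}\frac{(h+k)!}{h!}(z-\lambda)^{-(h+1+k)}$ and then setting $l=k+1$, which recovers the same sum in one pass.
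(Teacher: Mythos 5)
Your inductive step is sound: the coefficient recurrence $c_{d,l}-(h+l-1)\,c_{d,l-1}=d\,c_{d+1,l}$ for $c_{d,l}=(-1)^{l+h+1}\frac{(l+h-1)!}{(d-l)!(l-1)!}$ is exactly right, and it does collapse to $(d+1-l)+(l-1)=d$. The genuine problem is the base case, which you dismiss as ``an immediate check''. If you actually perform it, you get for $d=1$
\[
\frac{0!}{h!}\,c_{1,1}\,f(z)(z-\lambda)^{-(h+1)}
=(-1)^{h+2}\,\frac{h!}{h!}\,f(z)(z-\lambda)^{-(h+1)}
=(-1)^{h}\,\frac{f(z)}{(z-\lambda)^{h+1}},
\]
which equals the left-hand side only when $h$ is even. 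The proposition as printed is in fact false for odd $h$ (take $f\equiv 1$, $h=1$, $d=1$: the right-hand side is $-(z-\lambda)^{-2}$); the correct sign is $(-1)^{l+1}$, not $(-1)^{l+h+1}$. Your own alternative route exposes this: the general Leibniz rule gives
\[
\frac{\partial^{d-1}}{\partial z^{d-1}}\left(\frac{f(z)}{(z-\lambda)^{h+1}}\right)
=\sum_{l=1}^{d}\binom{d-1}{l-1}(-1)^{l-1}\,\frac{(h+l-1)!}{h!}\,f^{(d-l)}(z)\,(z-\lambda)^{-(h+l)},
\]
whose sign $(-1)^{l-1}$ does not depend on $h$, so it does \emph{not} ``recover the same sum'' when $h$ is odd. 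Note also that the induction itself can never detect the discrepancy: the recurrence is invariant under multiplying every $c_{d,l}$ by a fixed constant such as $(-1)^h$, so the entire burden of pinning down the sign rests on the base case you skipped.

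To be fair, the paper's own proof commits the identical slip: its base-case display reads $(-1)^2$ where the stated formula requires $(-1)^{h+2}$, so your proposal faithfully reproduces the paper's argument, bug included. (Inside the paper the error is ultimately harmless because it compensates a second one: in point (ii) of the proof of Theorem~\ref{thm:ext-cauchy}, the $h$-th superdiagonal entries of $R_j(z_jI-A)$ carry a factor $(-1)^h$ coming from the $-1$'s on the superdiagonal of $z_jI-A$, which is omitted there and is cancelled by the spurious $(-1)^h$ here, so the theorem's conclusion survives.) The repair to your write-up is easy: either prove the corrected statement with sign $(-1)^{l+1}$ by exactly your induction --- the base case becomes trivially true and the recurrence goes through verbatim --- or promote your Leibniz computation from an afterthought to the proof itself; it is one line, correct, and makes the sign error in the printed proposition evident.
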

       \begin{proof}
        For every fixed $h\in\mathbb N$ we proceed by induction on $d$. For $d=1$ we get
         \[
         \frac{f(z)}{(z-\lambda)^{h+1}}=\frac{0!}{h!}(-1)^2\frac{h!}{0!0!}\frac{f(z)}{(z-\lambda)^{h+1}}. 
         \]
         For the inductive step, let $d>1$ and observe that
         \begin{align*}
         \frac{\partial^{d}}{\partial z^{d}}\left(\frac{f(z)}{(z-\lambda)^{h+1}} \right)&=\frac{\partial}{\partial z}\left(\frac{\partial^{d-1}}{\partial z^{d-1}}\left(\frac{f(z)}{(z-\lambda)^{h+1}} \right)\right)\\
         &=\frac{\partial}{\partial z}\left(\frac{(d-1)!}{h!}\sum_{l=1}^{d}(-1)^{l+h+1}\frac{(l+h-1)!}{(d-l)!(l-1)!} f^{(d-l)}(z)(z-\lambda)^{-(h+l)} \right)\\
         &=\frac{(d-1)!}{h!}\sum_{l=1}^{d}(-1)^{l+h+1}\frac{(l+h-1)!}{(d-l)!(l-1)!} f^{(d+1-l)}(z)(z-\lambda)^{-(h+l)}\\ &+\frac{(d-1)!}{h!}\sum_{l=1}^{d}(-1)^{l+h+2}(h+l)\frac{(l+h-1)!}{(d-l)!(l-1)!}f^{(d-l)}(z)(z-\lambda)^{-(h+l+1)}\\
         &=\frac{(d-1)!}{h!}\sum_{l=1}^{d}(-1)^{l+h+1}\frac{(l+h-1)!}{(d-l)!(l-1)!} f^{(d+1-l)}(z)(z-\lambda)^{-(h+l)}\\
         &+\frac{(d-1)!}{h!}\sum_{l=2}^{d+1}(-1)^{l+h+1}(h+l-1)\frac{(l+h-2)!}{(d+1-l)!(l-2)!}f^{(d+1-l)}(z)(z-\lambda)^{-(h+l)}\\
         &=\frac{d!}{h!}\sum_{l=1}^{d+1}(-1)^{l+h+1}\frac{(l+h-1)!}{(d+1-l)!(l-1)!} f^{(d+1-l)}(z)(z-\lambda)^{-(h+l)}.
         \end{align*}
         \end{proof} 
   
   \bibliographystyle{abbrv}
   \bibliography{bibliography} 
     
   \end{document}